\theoremstyle{plain}
\newtheorem{theorem}{Theorem}[section]
\newtheorem{lemma}[theorem]{Lemma}
\newtheorem{proposition}[theorem]{Proposition}
\theoremstyle{definition}
\newtheorem{definition}[theorem]{Definition}
\newtheorem{remark}[theorem]{Remark}
\newtheorem{example}[theorem]{Example}
\numberwithin{equation}{section}
\numberwithin{figure}{section}
\DeclareMathOperator{\supp}{\mathrm{supp}}
\def\R{{\mathbb R}}
\def\C{{\mathbb C}}
\def\Z{{\mathbb Z}}
\def\ie{i.e.\ }
\def\eg{e.g.\ }
\newcommand\xqed[1]{%
  \leavevmode\unskip\penalty9999 \hbox{}\nobreak\hfill
  \quad\hbox{#1}}
\newcommand\blsquarehere{\xqed{$\blacksquare$}}
\def\blfootnote{\xdef\@thefnmark{}\@footnotetext}
\title{Partial Balayage on Riemannian Manifolds}
\author{
Bj\"orn Gustafsson,
Joakim Roos \\[.1cm] \small{KTH Royal Institute of Technology}
}
\date{\vspace{.35cm}}
\begin{document}

\maketitle

\begin{abstract}
  A general theory of partial balayage on Riemannian manifolds is
  developed, with emphasis on compact manifolds.  Partial balayage is
  an operation of sweeping measures, or charge distributions, to a
  prescribed density, and it is closely related to (construction of)
  quadrature domains for subharmonic functions, growth processes such as
  Laplacian growth and to weighted equilibrium distributions.

  Several examples are given in the paper, as well as some specific
  results. For instance, it is proved that, in two dimensions,
  harmonic and geodesic balls are the same if and only if the Gaussian
  curvature of the manifold is constant.

\end{abstract}

\vspace{.6cm}

\blfootnote{\textit{2010 Mathematics Subject Classification.} Primary
  31C12, Secondary 35R35, 58A14.}
\blfootnote{\textit{Key words and phrases.} Partial balayage,
  Riemannian manifold, equilibrium measure, quadrature domain,
  Laplacian growth, harmonic ball, obstacle problem, complementarity
  problem.}

\tableofcontents


\section{Introduction}\label{sec:introduction}

Balayage, in its classical meaning, is a potential theoretic tool for sweeping measures out of a given
domain, where they initially are located, in such a way that the external potentials
do not change. This means that, after sweeping, the measures have to sit on the boundary
of the domain. The idea goes back at least to C.F.~Gauss (see \cite{Doob-1984} for some history) and is inspired by
considerations in electrostatics.

Partial balayage is a generalization of this classical balayage, and amounts
to incomplete sweeping,  to a prescribed density which then will be attained only in an
\emph{a priori} unknown set. Outside this set the potentials are required to remain unchanged.
The first treatments, known to us, of ideas in such a direction are papers and books in geophysics by
D.~Zidarov and collaborators of him. See \cite{Zidarov-1990}, where the terminology
``partial gravi-equivalent mass scattering'' is used. 

Subsequent developments include methods for constructing quadrature domains by M.~Sakai
\cite{Sakai-1979, Sakai-1982, Sakai-1983}, construction of weak solutions to
moving boundary problems for Hele-Shaw flow (in a pure form nowadays often referred to as
Laplacian growth) \cite{Elliott-Janovsky-1981, Gustafsson-1985} and, later on, game theoretic
aspects, ``toppling'', internal diffusion limited aggregation (IDLA) \cite{Diaconis-Fulton-1990, Levine-Peres-2010}, etc.
There are close connections to weighted equilibrium distributions \cite{Saff-Totik-1997},
this theory being in fact essentially equivalent to partial balayage.
Most of the above mentioned developments were initiated independently of each other, and of Zidarov's work.

Partial balayage has so far mainly been considered in Euclidean space, but it is natural, and important
for applications, to extend the theory to Riemannian manifolds. The main purpose of this article is to lay
the ground for such generalizations. We will concentrate on compact manifolds, for which there are
some initial difficulties caused by the fact that only (signed) measures of zero net mass can have globally defined
potentials. The resolution of such difficulties, on the other hand, sheds light also on the theory in the Euclidean case. 

Besides development of a general theory we obtain some specific results and give a number of examples.
One specific result is that, in the case of two dimensions, harmonic balls (defined in terms
of mean-value properties of harmonic functions) are the same as geodesic balls if and only if the Gaussian
curvature of the manifold is constant. In the case of defining partial balayage in bounded regions of Euclidean space 
by using Dirichlet boundary conditions, we also obtain decisive estimates of much mass goes to the boundary.   

The paper is organized as follows. In Section~\ref{sec:hodge} we explain some necessary background material
in Hodge theory and potential theory. Section~\ref{sec:partial balayage} introduces partial balayage on compact Riemannian 
manifolds by using the the physically intuitive idea of energy minimization, first directly in terms of the charge distributions
involved (Definitions~\ref{def:partialbalayage0}) 
and then in terms of the potential $u$  of the difference between the final and initial charge distribution (Definition~\ref{def:partialbalayage}). 
Partial balayage eventually boils down to a free boundary problem, of obstacle type, for this potential $u$.

Section~\ref{sec:variational} contains several equivalent descriptions of partial balayage, for example in terms of a quite
useful complementarity system, and all this ends up with the final definition of partial balayage in terms of an obstacle
problem, in Section~\ref{sec:relaxed} (Definition~\ref{def:partialbalayage2}). This definition is made as general as possible,
but still some assumptions on the charge distributions are needed in order that the balayage shall exist.

In Section~\ref{sec:structure} we show that the result of partial balayage satisfies natural bounds and has the expected
structure, under some rather mild conditions. These are on the other hand are necessary, as  examples in Section~\ref{sec:examples} will show. 
Most of the material in these first six sections is well-known
in the Euclidean case, but the Riemannian manifold setting requires some novel issues to be handled.

The relation between partial balayage and weighted equilibrium distributions,  subharmonic quadrature domains and
Laplacian growth processes are briefly explained  in Sections~\ref{sec:weighted-eq-confs}, \ref{sec:quadrature} and \ref{sec:Laplacian growth}, respectively.
Related to this we show in Section~\ref{sec:balls} the previously mentioned result on geodesic and harmonic balls.

Partial balayage on non-compact Riemannian manifolds is discussed in Section~\ref{sec:nonclosed}. For manifolds with boundary one
has to choose which type of boundary conditions to work with, and there are several possibilities in this respect. In the case of
Dirichlet boundary conditions some of the swept mass will go to the boundary and there arises the question, in case one exhausts
for example $\R^n$ with a sequence of bounded regions, whether this excess mass will eventually be swallowed by the larger
regions or whether it be lost in the limit. This question will be settled in Section~\ref{sec:ball}, and it turns out that the
answer depends on the dimension: in dimension $n\geq 3$ mass may be lost in the limit, as is shown by examples, while in
dimension $n=1,2$ we show by estimates (Theorem~\ref{thm:excess}) that all mass will be caught in the limit.

Section~\ref{sec:examples} contains some relatively elementary examples, with the purpose of illustrating the general theory, and Section~\ref{sec:doubling}
discusses certain symmetric compact manifolds obtained by doubling a manifold with boundary.  

The paper builds on, and is inspired by, many previous papers and books in the area, for example (with an incomplete list, and partly repeating from the beginning
of the introduction) D.~Zidarov \cite{Zidarov-1990}, M.~Sakai \cite{Sakai-1982}, A.~Varchenko,  P.~Etingof \cite{Varchenko-Etingof-1992}, H.S.~Shapiro~\cite{Shapiro-1992},
E.~Saff, V.~Totik \cite{Saff-Totik-1997}, 
H.~Hedenmalm, S.~Shimorin, N.~Makarov \cite{Hedenmalm-Shimorin-2002, Hedenmalm-Makarov-2013}, 
T.~Sj\"odin, S.~Gardiner \cite{Sjodin-2007, Gardiner-Sjodin-2008, Gardiner-Sjodin-2009},
L.~Levine, Y.~Peres \cite{Levine-Peres-2010}, F.~Balogh, J.~Harnad \cite{Balogh-Harnad-2009}.


\newpage
\subsection{Notations and conventions}\label{sec:notations}

\begin{itemize}

\item $B(a,R)=\{x\in \R^n: |x-a|<R\}$, $B_R=B(0,R)$.

\item $S^{n-1}=\partial B(0,1)\subset \R^n$.

\item $*\omega$: the Hodge star acting on a differential form $\omega$.

\item ${\rm vol}^n=*1$: the volume form on an $n$-dimensional Riemannian manifold.

\item $\delta=(-1)^{n(p+1)+1}*d*$: the coexterior derivative ($d$ is the exterior derivative).

\item $\mathcal{L}_{\bf v}$: Lie derivative by a vector field ${\bf v}$.

\item $i(\bf v)\omega$: interior derivation (contraction) of a differential form $\omega$ by a vector field ${\bf v}$ .

\item $\Delta=-(\delta d +d\delta)$: the Hodge Laplacian, with sign chosen so that it agrees with the ordinary 
Laplacian in the Euclidean case. When $u$ is a function we usually work with the $n$-form $d*du =(\Delta u)\,{\rm vol}^n$ instead.

\item $\delta_a$: Dirac current (point charge) at a point $a$, considered as an $n$-form. 

\item $\mathsf{m}(\omega)$: normalized total mass of a differential form $\omega$,
\begin{equation*}
\textsf{m}(\omega)= \frac{1}{{\rm vol}(M)}\int_M\omega. 
\end{equation*}

\item $\mathsf{m}(\varphi)$: normalized total mass of the form
  $\varphi \, \mathrm{vol}^n$ for a function $\varphi$,
\begin{equation*}\label{mvol}
  \textsf{m}(\varphi) = \mathsf{m}(\varphi\, \mathrm{vol}^n) = \frac{1}{{\rm vol}(M)}\int_M\varphi\, {\rm vol}^n.
\end{equation*}
\item $U^\mu$: the Newtonian potential of a (signed) measure $\mu$ in $\R^n$, normalized so that $-\Delta U^\mu =\mu$ and vanishing at infinity
(when $n\geq 3$; logarithmic behaviour when $n=2$).

\item $G^\omega$: the Green's potential of a charge distribution $\omega$ on a compact manifold, satisfying $-d*d G^\omega=\omega - \textsf{m}(\omega) {\rm vol}^n$
and normalized by $\int G^\omega \,{\rm vol}^n =0$.

\item $G(a,b)=G^{\delta_a}(b)$:  Green's kernel on a compact manifold. 

\item $g_M (x,a)$: Dirichlet Green's function on a manifold $M$ with boundary. Defined by $-d*dg_M (\cdot,a)=\delta_a$ in $M$, $g_M(\cdot, a)=0$ on $\partial M$.

\item $\mathcal{E}(\omega_1,\omega_2)= \int dG^{\omega_1}\wedge *dG^{\omega_2}= \int G^{\omega_1}\wedge {\omega_2}$: mutual energy.

\item $\mathcal{E}(\omega)=\mathcal{E}(\omega,\omega)$. 

\item {\it charge distribution} = signed $n$-form current = signed
  measure considered as an $n$-form (on a manifold of dimension $n$).

\item {\it potential} = function (defined a.e.) which locally is the difference between two subharmonic functions
= $\delta$-subharmonic function = function $u$ for which $d*du$ is a charge distribution.

\item ${\rm Bal}(\sigma, \lambda)$: partial balayage of a measure or charge distribution $\sigma$ towards $\lambda$
(see Definitions~\ref{def:partialbalayage0}, \ref{def:partialbalayage} and \ref{def:partialbalayage2}).

\item $W^{1,2}(M)$, $W^{-1,2}(M)_n$, $L^2(M)_p$, etc.: Sobolev and Lebesgue spaces (of functions, $n$-forms, $p$-forms, 
etc.), see Section~\ref{sec:curr-finite-energy}.

\end{itemize}


\section{Hodge theory and potential theory}\label{sec:hodge}

\subsection{Currents of finite energy}
\label{sec:curr-finite-energy}

We shall recall a few concepts from Hodge theory and potential theory. For more details  and general notational conventions we refer
to \cite{Frankel-2012, Warner-1983, Petersen-2006, Berger-2003, Helms-1969, Doob-1984, Armitage-Gardiner-2001}. 
We shall also enter into the terminology of currents (differential forms with distributional coefficients),  see \cite{deRham-1984, Federer-1969}
on this matter. 

We assume that $M$ is a compact (closed) oriented Riemannian manifold of dimension $n\geq 1$. The coexterior derivative $\delta$ is defined on $p$-forms by
$$
\delta=(-1)^{n(p+1)+1}*d* \, ,
$$
where the star is the Hodge star, transforming $p$-forms to complementary $(n-p)$-forms.
Thus $\delta$ takes $p$-forms to $(p-1)$-forms. The Hodge Laplacian is the positive operator
$$
-\Delta= \delta d +d\delta.
$$
A $p$-form $\omega$ is \emph{harmonic} if $\Delta \omega=0$, and on a compact manifold
this is equivalent to that the two equations  $d\omega=0=\delta \omega$ hold. 

The natural inner product on the space of $p$-forms is
\begin{equation}\label{innerproduct}
(\alpha, \beta)_p=\int_M \alpha\wedge *\beta.
\end{equation}
We denote by $L^2(M)_p$ the 
Hilbert space of $p$-forms with $(\omega,\omega)_p<\infty$ and inner product (\ref{innerproduct}).
The Hodge theorem \cite{Warner-1983} says that any  $\omega\in L^2(M)_p$
has an orthogonal decomposition
\begin{equation}\label{hodge}
\omega= \eta +d\alpha+\delta  \beta,
\end{equation}
where $\eta$ is a harmonic $p$-form, $\alpha$ is a coexact $(p-1)$-form,  and $\beta$ is an exact $(p+1)$-form. 
In (\ref{hodge}), the forms $\alpha$ and $\beta$ are not uniquely determined, only $d\alpha$, $\delta\beta$, and $\eta$ are.
The decomposition can however be made more precise as
\begin{equation}\label{hodge1}
\omega= \eta +d\delta\tau+\delta d  \tau=\eta- \Delta \tau,
\end{equation}
where the $p$-form $\tau$ becomes unique on requiring that it shall be orthogonal to all harmonic forms. For this choice of $\tau$
we write
\begin{equation}\label{green}
\tau=G(\omega), 
\end{equation}
with $G$ interpreted as the ``Green's operator'' for solving the Poisson equation $-\Delta \tau =\omega -H(\omega)$.
Here $H$ denotes the orthogonal projection onto the space of harmonic forms (\ie  $H(\omega)=\eta$ in (\ref{hodge1})).

The only harmonic functions on $M$ are the constant functions, and hence the only global  harmonic $n$-forms are 
the constant multiples of the volume form ${\rm vol}^n=*1$. If $\omega$ is any $n$-form, its Hodge decomposition therefore is of the form
$\omega =\textsf{t}\,{\rm vol}^n  - \Delta(\psi \,{\rm vol}^n)$ for some $\textsf{t}\in\R$ and some function $\psi$. We have $\Delta (\psi\,{\rm vol}^n)=(\Delta \psi){\rm vol}^n=d*d\psi$, so this becomes
\begin{equation}\label{hodgeomega}
\omega =\textsf{t}\,{\rm vol}^n  - d* d\psi,
\end{equation}
where
\begin{equation}\label{tvol}
  \textsf{t}=\textsf{m}(\omega) := \frac{1}{{\rm vol}(M)}\int_M\omega. 
\end{equation}
The function $\psi$ becomes uniquely determined on requiring that $\tau=\psi \, \mathrm{vol}^n$
shall be orthogonal to all harmonic $n$-forms, which means that
\begin{equation}\label{psivol}
\int_M \, \psi\, {\rm vol}^n=0.
\end{equation}

With the normalization (\ref{psivol}) of $\psi$ we have, in terms of the Green's operator above,
$$
G(\omega)=\psi\,{\rm vol}^n.
$$
In the right member here only the function $\psi$ carries any information, and we single it out by writing  the same relation also as
\begin{equation}\label{greenpotential}
\psi=G^\omega.
\end{equation}
Thus $G(\omega)=G^\omega {\rm vol}^n$ in general.
We interpret the function $G^\omega$ as a {Green's potential} of $\omega$, for which
\begin{equation}\label{ddG}
-d*d G^\omega= \omega-\textsf{t}\,{\rm vol}^n,
\end{equation}
where $\mathsf{t} = \mathsf{m}(\omega)$ and the term
$\textsf{t}\,{\rm vol}^n =H(\omega)$ shall be interpreted as an
auto\-matic compensating background field balancing the right hand side
to zero net mass. In ordinary potential theory one asks Green's
potentials to vanish on the boundary (or at ``infinity''), but here
there is no boundary, so our normalization will just be the one in
(\ref{psivol}), \ie
\begin{equation}\label{Gvol}
\int_M \, G^\omega \, {\rm vol}^n=0.
\end{equation}

With (\ref{ddG}), (\ref{Gvol}), $G^\omega$ is uniquely determined by $\omega$. However, $\omega$ is not uniquely determined by $G^\omega$. In fact, since
we can always rewrite (\ref{ddG}) as
$$
-d*d G^\omega= \omega + \textsf{s}\,{\rm vol}^n-(\textsf{t}+\textsf{s})\,{\rm vol}^n
$$
we have
\begin{equation}\label{omeganotunique}
G^{\omega + \textsf{s}\,{\rm vol}^n}=G^\omega
\end{equation}
for any $\textsf{s}\in\R$. As a particular case, we notice that $G^{{\rm vol}^n}=0$.

In our applications, the inner product $(f, f)_1$, with $f$  a $1$-form, will have the interpretation of being the energy
of $f$ as a field (like an electric field), or of the $n$-form $d* f$ as a charge distribution.
For a function (``potential'') $u$ we consider the Dirichlet integral $(du,du)_1$ to be its energy.
Thus constant functions have no energy. Similarly, for $n$-forms (``charge distributions'') we consider 
${\rm vol}^n$ to have no energy, and the energy  of $\omega$ in (\ref{hodgeomega}) is then
defined to be the energy $(dG^\omega, dG^\omega)_1$ of its Green's potential $G^\omega$.
For the corresponding quadratic form,  $\mathcal{E}(\omega_1, \omega_2)=(dG^{\omega_1}, dG^{\omega_2})_1$, we get,
after a partial integration and on using (\ref{ddG}), (\ref{Gvol}), 
\begin{equation}\label{Eomega}
\mathcal{E}(\omega_1, \omega_2)=\int_M dG^{\omega_1} \wedge *dG^{\omega_2}=\int_M G^{\omega_1} \wedge \omega_2.
\end{equation}
We shall write also $\mathcal{E}(\omega)=\mathcal{E}(\omega,\omega)$, and note that $\mathcal{E}(\omega)\geq 0$, with equality holding
if and only if $\omega$ is a constant multiple of ${\rm vol}^n$.

The function theoretic interpretations of  forms  having finite energy can be expressed in terms of their belonging to appropriate Sobolev  classes. 
See for example \cite{Schwarz-1995} for careful discussions of Sobolev spaces on Riemannian manifolds.
Specifically, the meaning of $u$, or $du$, having finite energy is that $u$ belongs to the Sobolev space $W^{1,2}(M)$, and an $n$-form
current having finite energy means that it is  in $W^{-1,2}(M)$. Thus $G^\omega \in W^{1,2}(M)$ when $\omega\in W^{-1,2}(M)$.

In some integrals which will come up, the integrand will be a product between two currents, which in general is not well-defined. However,
these currents will in our cases usually belong to two Sobolev spaces which are dual to each other, to the effect that the integral will have 
the interpretation of representing the duality pairing. And when the integral in addition has a measure theoretic interpretation,
the two meanings of the integral will in general agree. See \cite{Brezis-Browder-1979} for some clarification of 
such matters. 

The usual formulas for partial integration (like Stokes' theorem) always hold with appropriate interpretations, essentially 
as a consequence of the  definition of derivatives in the sense of distributions, or currents. For example, if $f$ is a $1$-form 
current of finite energy and $\varphi\in C^\infty (M)$ is a test function, then by definition
$$
\int_M \varphi \wedge {d*f} = -\int_M d\varphi \wedge *f,
$$
and this formula remains valid when $\varphi \in W^{1,2}(M)$.

As a summary, the main spaces and mappings which will show up are exhibited in the sequence
$$
W^{1,2}(M)\stackrel{{d}}{\longrightarrow} L^2(M)_1 \stackrel{\mathrm{*}}{\longrightarrow} L^2(M)_{n-1}\stackrel{{d}}{\longrightarrow} W^{-1,2}(M)_n,
$$
defined by $u\mapsto du\mapsto *du \mapsto d*du$, and where the subscript $p=1, n-1, n$ indicates the degree of the form or current (not written out when $p=0$).
The first map above has a one-dimensional kernel, and the last map a one-dimensional cokernel. As already indicated, the energy of an object at any level in this
sequence is defined as the squared norm of it when it is moved to $L^2(M)_1$ by the above mappings, and constant functions and constant multiples of the volume form
are then left without energy. Similarly, the mutual energy between two objects is related in
the same way to the inner product in $L^2(M)_1$ (which is defined by (\ref{innerproduct}) with $p=1$).


\subsection{Charge distributions and potentials}\label{sec:charge}

We will need to go beyond the finite energy setting of currents described above. On the other hand we shall also restrict a little, because we shall only deal with $n$-form
currents which can be considered as signed measures, \ie those which are differences between two positive $n$-form currents. 

A positive measure is naturally associated to a positive functional,
\begin{equation}\label{L}
L:C^\infty (M)\to \R, \quad \text{with\,\,} L(\varphi)\geq 0 \text{\,\,for\,\,} \varphi\geq 0,
\end{equation}
and this can be thought of as  an $n$-form current $\omega$, \ie an $n$-form with distributional coefficients,  by writing (formally)
\begin{equation}\label{integral}
L(\varphi)=\int_M \varphi \wedge\omega \quad (\varphi\in C^\infty (M)).
\end{equation}

As an example, the Dirac measure (point mass) at a point $a\in M$ corresponds to the functional $L(\varphi)=\varphi(a)$ and, notationally, $\delta_a$ will then refer
to the corresponding $n$-form current.
A signed measure is the difference between two positive measures, and the corresponding current will then be called a signed $n$-form current or, for briefness,
a {\it charge distribution}. Any function, or $0$-current, $\psi$ for which $d*d\psi$ is a charge distribution will be called a {\it potential}.

\begin{remark}
  What we call potentials have by S.~Gardiner, T.~Sj\"odin
  \cite{Gardiner-Sjodin-2012, Gardiner-Sjodin-2014} been named
  $\delta$-subharmonic functions (to be interpreted as the difference
  between two subharmonic functions).
  \blsquarehere
\end{remark}

If $\omega$ is a positive charge distribution, then its Green's potential $G^\omega$, which so far has been considered just as a $0$-current (or a function defined a.e.),  
has a canonical representative  in form of a lower semicontinuous function with values in $\R\cup \{+\infty\}$. This is also the largest lower semicontinuous representative of 
$G^\omega$.
Any charge distribution $\omega$ has a minimal decomposition as a difference between two nonnegative charge distributions,  namely the Jordan decomposition 
$\omega=\omega_+  -\omega_-$.  If a $0$-current $\psi$ satisfies $-d*d\psi =\omega$ then, necessarily, 
\begin{equation}\label{canonical}
\psi=G^{\omega_+}-G^{\omega_-}+c   
\end{equation}
(in the sense of currents) for some constant $c$. So this is the general form of a potential in $M$.
There may be a small set of points (of capacity zero) where $\psi$ cannot be assigned any particular value,
because both Green's potentials above may take the value $+\infty$ at the same point. Thus, in general, a potential is defined only a.e. (or, more precisely, quasi everywhere)
when considered as a function. However, if $\psi$ is bounded either from above or from below then at most one of the Green's potentials can attain the value $+\infty$,
hence $\psi$ will in this case have have a canonical representative as an everywhere defined function, namely that function given by the right member in (\ref{canonical}).

Since a charge distribution can always be decomposed into its positive and negative parts one can in many cases define  the mutual energy $\mathcal{E}(\omega_1, \omega_2)$
even in cases when one or both of the individual charge distributions $\omega_1$, $\omega_2$ have infinite energy. In fact, the mutual energy can always be decomposed as 
\begin{align}
  \mathcal{E}(\omega_1, \omega_2) =& \ \mathcal{E}((\omega_1)_+, (\omega_2)_+)-\mathcal{E}((\omega_1)_+, (\omega_2)_-) \nonumber \\
  &-\mathcal{E}((\omega_1)_-,( \omega_2)_+)+\mathcal{E}((\omega_1)_-, (\omega_2)_-), \label{Eomega12}
\end{align}
where each individual term is either $+\infty$ or a finite nonnegative number. Thus the the mutual energy has a definite meaning 
as long as no two terms of opposite signs in (\ref{Eomega12}) are infinite.

For example, the Dirac current $\delta_a$ certainly has infinite energy, but if $a\ne b$, then $\mathcal{E}(\delta_a, \delta_b)$ is still finite and has a natural interpretation:
it is the {\it Green's kernel} $G(a,b)$, which can be defined as
\begin{equation}\label{Gab}
G(a,b)=G^{\delta_a}(b).
\end{equation}
In fact, using (\ref{ddG}) and (\ref{Gvol}) for $\omega=\delta_a, \delta_b$ we have
\begin{align*}
  \mathcal{E}(\delta_a, \delta_b) &= \int_M dG^{\delta_a}\wedge *dG^{\delta_b}= -\int_M G^{\delta_a}\wedge d*dG^{\delta_b} \\
                                  &= \int_M G^{\delta_a}\wedge \left( \delta_b-\mathsf{m}(\delta_b)\,{\rm vol}^n \right) = G^{\delta_a}(b) = G(a,b), 
\end{align*}
which in addition shows that $G(a,b)$ is symmetric. Note that the Dirichlet integral above is absolutely integrable because the singularity of $dG^{\delta_a}$
is relatively mild (namely  like $|x-a|^{1-n}$), and that the use of Stokes' formula can easily be justified by classical methods (\eg by performing partial
integration after having removed small balls around the singularities).

One advantage with $G(a,b)$ is that it allows for an expression for the energy as a double integral with a kernel. We have
\begin{equation}\label{GG}
G^{\omega}(x)=\int_M G(x,y)\, \omega(y),
\end{equation}
and so
\begin{equation}\label{EG}
\mathcal{E}(\omega_1,\omega_2)=\int_M \int_M G(x,y)\, \omega_1(x) \otimes\omega_2 (y).
\end{equation}
Slightly more generally, any potential $\varphi$ can be represented as
\begin{equation}\label{varphiG}
\varphi(x) =\textsf{m}(\varphi) \,{\rm vol}^n-\int_M G(x,y)\, d*d\varphi(y).
\end{equation}
The kernel representation (\ref{EG}) of the energy can be used sometimes to replace a use of Stokes' formula by an application of Fubini's theorem, 
which may be considered as more ``robust''. 

An example is the formula
\begin{equation}\label{varphi12}
\int_M \varphi_1\, d*d\varphi_2 = \int_M \varphi_2\, d*d\varphi_1,
\end{equation}
which should be true under general circumstances. If the $\varphi_j$ have finite energy it is true (at least if the integrals are
interpreted as duality pairings between $W^{1,2}(M)$ and $W^{-1,2}(M)_n$), but if they are just potentials it need not be true.
Indeed, $d*d\varphi_2$ may for example contain a point mass, while it need not be possible to assign any particular value to
$\varphi_1$ at the location of this mass, so even the meaning of the integral is in general ambiguous. However, if each of
$\varphi_j$ ($j=1,2$) is bounded either from below or from above then (\ref{varphi12}) does hold, provided the $\varphi_j$ are
defined pointwise as in (\ref{canonical}) with the canonical lower semicontinuous representatives of $G^{\omega_+}$ and
$G^{\omega_-}$ used.

Assume for example that $\varphi_1\leq C_1<\infty$ and that $\varphi_2\geq C_2>-\infty$. Set $\omega_j=-d*d\varphi_j$.
For $\varphi_1$ we conclude from (\ref{canonical}) (for $\psi=\varphi_1$) that $G^{(\omega_1)_+}$ is bounded form above, hence that $(\omega_1)_+$ has finite
energy:
$$
\mathcal{E}((\omega_1)_+)= \int_M dG^{(\omega_1)_+}\wedge *dG^{(\omega_1)_+}= \int_M G^{(\omega_1)_+} \wedge {(\omega_1)_+}<\infty.
$$
Similarly it follows that $(\omega_2)_-$ has finite energy. Now  the left member of (\ref{varphi12}) becomes
\begin{align*}
  \int_M \varphi_1 d*d\varphi_2 =& -\int_M (G^{(\omega_1)_+} - G^{(\omega_1)_-} +c )\wedge ({(\omega_2)_+} - {(\omega_2)_-}) \\
                                =& -\mathcal{E}((\omega_1)_+, (\omega_2)_+)+\mathcal{E}((\omega_1)_+, (\omega_2)_-) \\
                                &+\mathcal{E}((\omega_1)_-, (\omega_2)_+)-\mathcal{E}((\omega_1)_-, (\omega_2)_-).  
\end{align*}
The only term here which may be infinite is the third one, $\mathcal{E}((\omega_1)_-, (\omega_2)_+)$, because every other term contains, after partial integration,
a Green's potential which is bounded either from above or below, which guarantees finiteness. 

In conclusion, $\int_M \varphi_1 \,d*d\varphi_2$ has a definite value, which may be $+\infty$. And in addition, the expression for it in terms of energies is symmetric
under changes $1\leftrightarrow 2$, which means that it must equal $\int_M \varphi_2 \, d*d\varphi_1$. This proves (\ref{varphi12}) under the stated assumptions.


\section{Partial balayage by energy minimization}\label{sec:partial balayage}

Partial balayage is a sweeping operation which depends on a  measure $\lambda$, which is kept fixed, 
and then sweeps any measure $\sigma$ to a measure $\nu$ which satisfies $\nu\leq \lambda$ everywhere by using a minimum amount of work, \ie so that
the energy of $\nu-\sigma$ is minimized. This turns out to entail  that the Newtonian potentials
of $\sigma$ and $\nu$ agree on the set where $\nu<\lambda$. Partial balayage can be defined on arbitrary Riemannian manifolds, but in the literature
full details have so far been given only for cases of subdomains of $\R^n$.
Now we are ready to define partial balayage in the finite energy setting.

\begin{definition}\label{def:partialbalayage0}
Let $\sigma$ and $\lambda$ be two charge distributions of finite energy and satisfying
\begin{equation}\label{sigmalambda}
\int_M \sigma\leq \int_M \lambda.
\end{equation}
Then there is a unique charge distribution $\nu$ which solves the minimum energy problem
\begin{equation}\label{minimumenergy}
\min_\nu \mathcal{E}(\nu-\sigma): \quad \nu\leq \lambda, \quad \int_M \nu=\int_M \sigma.
\end{equation}
We call $\nu$ the {\it partial balayage} of $\sigma$ to $\lambda$ and write
$$
{\rm Bal\,}(\sigma,\lambda)=\nu.
$$
\end{definition}
Above one thinks of $\lambda$ as being fixed, so the sweeping operation is really the replacement $\sigma\mapsto \nu$,
and the last side condition in (\ref{minimumenergy}) says that the total mass shall not be changed under this operation.
This is in fact a necessary requirement in order that the solution of the minimization problem shall be unique, because  
$\mathcal{E}$ not being positive definite means that one could otherwise add an arbitrary multiple of ${\rm vol}^n$ to $\nu$.
As for the existence, it is  clear that (\ref{sigmalambda}) is the only assumption needed to ensure that the set of $\nu$ to be minimized over is not empty,
hence the minimization problem has indeed a (unique) solution.

It is immediate from the definition that 
\begin{equation}\label{covariance}
{\rm Bal\,}(\sigma+\tau,\lambda+\tau)={\rm Bal\,}(\sigma,\lambda)+\tau
\end{equation}
for any $\tau$, hence it will be enough to deal with ${\rm Bal\,}(\sigma,0)$, as far as the theoretical studies concern.  
The general case can then be recovered by 
\begin{equation}\label{balgeneral}
{\rm Bal\,}(\sigma,\lambda)={\rm Bal\,}(\sigma-\lambda,0)+\lambda.
\end{equation}

Thus we assume now 
\begin{equation}\label{sigmanegative}
\int_M \sigma \leq 0,
\end{equation}
and then $\nu={\rm Bal\,}(\sigma,0)$ is defined as the solution of 
\begin{equation}\label{minimumenergy1}
\min_\nu \mathcal{E}(\nu-\sigma): \quad \nu\leq 0, \quad \int_M \nu=\int_M\sigma.
\end{equation}
To spell this out in terms of potentials we make Hodge decompositions of $\sigma$ and $\nu$. Due to the last side condition in (\ref{minimumenergy1})
these will, after a sign change,  be of the form
\begin{equation}\label{hodgesigma}
\sigma = - \textsf{t}\,{\rm vol}^n +d*d\psi,
\end{equation} 
\begin{equation}\label{hodgenu}
\nu = - \textsf{t}\,{\rm vol}^n +d*dv,
\end{equation} 
with the same $\textsf{t}\in\R$, and for some functions $\psi$ and $v$. The value of $\textsf{t}$ is 
\begin{equation}\label{t}
\textsf{t}=-\textsf{m}(\sigma)= -\frac{1}{{\rm vol}^n(M)}\int_M\sigma\geq 0 
\end{equation}
and $\psi$ may be normalized by (\ref{psivol}), which then gives
\begin{equation}\label{psiGsigma}
\psi=-G^\sigma.
\end{equation}
We shall not normalize $v$ in the same way, however, it will become 
fixed after we have normalized the difference function
\begin{equation}\label{uvpsi}
u=v-\psi
\end{equation}
in a special way (see (\ref{complementarity}) below).

Taking the difference between the two Hodge decompositions above gives $\nu-\sigma =d*du$. From this we see that
$$
\mathcal{E}(\nu-\sigma)= \int_M du \wedge * du.
$$
The definition of partial balayage therefore boils down to the following.
\begin{definition}\label{def:partialbalayage} (Reformulation of Definition~\ref{def:partialbalayage0} in the case $\lambda=0$.)
Under the assumption (\ref{sigmanegative}) and when $\sigma$ has finite energy,
\begin{equation}\label{partialbalayageu}
{\rm Bal\,}(\sigma,0)=d*du+\sigma,
\end{equation}
where $du$ is the unique solution of
\begin{equation}\label{minimizationu}
\min \int_M du\wedge *du: \quad 
d *du+\sigma \leq 0
\quad(du\in L^2(M)_1).
\end{equation}
\end{definition}

The above definition only refers to the $1$-form $du$, required then to be exact, but in the next section
we shall find a convenient way of adjusting the free additive constant in $u$.


\section{Variational and complementarity  formulations}\label{sec:variational}

If we have equality  in (\ref{sigmanegative}), then, in the side condition for (\ref{minimizationu}), $du$ can be chosen to satisfy $-d*du=\sigma$, 
and there will be no other choice of $du$ which satisfies $d*du+\sigma\leq 0$. 
It follows that  ${\rm Bal\,}(\sigma,0)=0$ when $\int_M \sigma=0$. 
We need not further discuss this case, so we assume now that 
\begin{equation}\label{sigmastrictlynegative}
\int_M \sigma < 0.
\end{equation}
Then also
$$
\int_M d*du+\sigma <0,
$$
and it follows that we can fix the free additive constant in  $u$ by requiring that
\begin{equation}\label{complementarity}
\int_M u\wedge (d*du+\sigma)= 0.
\end{equation}

Next we turn to the variational formulation of (\ref{minimizationu}). After a partial integration this becomes,
\begin{equation}\label{variation}
\int_M u\wedge d*d(u-\varphi)\geq 0 \quad \text{for all\,\,} \varphi \text{ satisfying\,\, }d*d\varphi +\sigma \leq 0.
\end{equation}
Under the variations allowed here, the $n$-form
\begin{equation}\label{tau}
\tau= -(d*d\varphi +\sigma)
\end{equation}
is subject only to the constraint $\tau\geq 0$ and that the total mass of $\tau$ is fixed by
\begin{equation}\label{s}
\int_M \tau= -\int_M \sigma>0.
\end{equation} 
Given any $\tau$ satisfying these constraints there exists (by the Hodge theorem) a solution $\varphi$
of (\ref{tau}). Thus, in terms of $\tau$  and on using
(\ref{complementarity}), the variational inequality (\ref{variation}) can be written in the form
$$
\int_M u\wedge \tau \geq 0 \quad \text{for all\,\,} \tau\geq 0 \text{ satisfying } \int_M \tau =-\int_M \sigma.
$$
Clearly this makes it impossible for $u$ to attain negative values (on a set of positive measure),
hence we conclude that $u\geq 0$. Thus we have proved everything but the uniqueness statement in
the following theorem.

\begin{theorem}\label{thm:complementarity}
Assuming (\ref{sigmastrictlynegative}), the linear complementarity problem
\begin{equation}\label{complementaritysystem}
\begin{cases}
u\geq 0,\\[.2cm]
d*du+\sigma\leq 0,\\[.2cm]
\displaystyle \int_M u\wedge (d*du+\sigma)= 0.
\end{cases}
\end{equation}
has a unique solution $u$, and its exterior derivative $du$ is the unique minimizer of (\ref{minimizationu}).
\end{theorem}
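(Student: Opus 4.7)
The existence half of the theorem is already contained in the preceding discussion: the minimization problem (\ref{minimizationu}) has a minimizer $du$ (unique as a $1$-form by strict convexity of the Dirichlet integral on the closed convex set of admissible $du$ in $L^2(M)_1$), and fixing the additive constant via (\ref{complementarity}) yields a $u\in W^{1,2}(M)$ satisfying all three lines of (\ref{complementaritysystem}). Only the uniqueness of a solution to the LCP remains to be verified.

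The plan for that step is a standard monotonicity argument. Suppose $u_1, u_2$ both solve (\ref{complementaritysystem}) and set $\omega_i=d*du_i+\sigma$. Since $u_j\geq 0$ and $\omega_i\leq 0$, the cross-integrals satisfy $\int_M u_j\wedge\omega_i\leq 0$, while the third line of (\ref{complementaritysystem}) kills the diagonal integrals. Subtracting gives
\[
\int_M (u_1-u_2)\wedge(\omega_1-\omega_2)\;\geq\; 0.
\]
On the other hand $\omega_1-\omega_2=d*d(u_1-u_2)$, and integration by parts (read as the duality pairing between $W^{1,2}(M)$ and $W^{-1,2}(M)_n$, in the spirit of (\ref{varphi12})) yields
\[
\int_M (u_1-u_2)\wedge d*d(u_1-u_2) \;=\; -\int_M d(u_1-u_2)\wedge *d(u_1-u_2)\;\leq\; 0.
\]
The two inequalities together force $d(u_1-u_2)=0$ in $L^2(M)_1$, so that $u_1-u_2=c$ is a constant on the connected closed manifold $M$. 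Consequently $\omega_1=\omega_2=:\omega$, and the orthogonality condition evaluated on the difference reads $c\int_M\omega=0$. But $d*du$ is exact, so $\int_M\omega=\int_M\sigma<0$ by (\ref{sigmastrictlynegative}); hence $c=0$ and $u_1=u_2$. Uniqueness of $du$ as the minimizer in (\ref{minimizationu}) then follows at once: any second minimizer would, after imposing (\ref{complementarity}), produce a second LCP solution.

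The main technical delicacy I expect is the legitimacy of the two integrations by parts, since a priori $u_i\in W^{1,2}(M)$ and $\omega_i\in W^{-1,2}(M)_n$; both occurrences of $\int_M u\wedge\omega$ must be read as $W^{1,2}$--$W^{-1,2}$ duality pairings. This is precisely the regime treated in the discussion surrounding (\ref{varphi12}), and the setting here is clean: $\sigma$ has finite energy by assumption, so $u_i\in W^{1,2}(M)$, and the sign condition $u_i\geq 0$ ensures the one-sided boundedness needed to make the pointwise/duality interpretations coincide.
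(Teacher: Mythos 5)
Your proof is correct, and the uniqueness half takes a genuinely different route from the paper's. The paper disposes of uniqueness in one line: any solution of (\ref{complementaritysystem}) can be run backwards through the derivation preceding the theorem to the variational inequality (\ref{variation}), which is equivalent to the minimization problem (\ref{minimizationu}) and hence determines $du$ uniquely, after which (\ref{complementarity}) pins the additive constant. You instead give the standard direct monotonicity argument for linear complementarity problems: subtract two solutions, use the sign conditions to get $\int_M (u_1-u_2)\wedge(\omega_1-\omega_2)\geq 0$, integrate by parts to get $\int_M (u_1-u_2)\wedge d*d(u_1-u_2)=-\int_M d(u_1-u_2)\wedge *d(u_1-u_2)\leq 0$, and conclude $d(u_1-u_2)=0$; the remaining constant is killed by the orthogonality condition together with $\int_M\sigma<0$, which is exactly the mechanism by which the paper's normalization (\ref{complementarity}) fixes the constant (you also use connectedness of $M$, implicit in the paper's standing assertion that the only harmonic functions on $M$ are constants). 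What your route buys is self-containedness --- no need to re-trace the equivalence chain of Section~\ref{sec:variational} --- at the cost of justifying the cross pairings $\int_M u_j\wedge\omega_i\leq 0$; you flag this correctly, and it does hold, since each $\omega_i\in W^{-1,2}(M)_n$ is a nonpositive measure of finite energy, hence charges no set of capacity zero, so its pairing with the quasi-continuous representative of $u_j\geq 0$ is $\leq 0$, in the spirit of the discussion around (\ref{varphi12}). One small imprecision: finite energy of $\sigma$ does not by itself force an arbitrary LCP solution to lie in $W^{1,2}(M)$; rather, in Section~\ref{sec:variational} the system (\ref{complementaritysystem}) is posed within the finite-energy class ($du\in L^2(M)_1$, as in (\ref{minimizationu})), so $u_i\in W^{1,2}(M)$ should be read as part of the problem formulation, not as a consequence of the hypothesis on $\sigma$. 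With that reading, every step of your argument goes through.
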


The  uniqueness follows from the observation that starting from (\ref{complementaritysystem})
one can go backwards through the above steps to arrive at the variational formulation (\ref{variation}),
which is equivalent to the minimization problem (\ref{minimizationu}) and hence has a unique solution (first for $du$, after which $u$ itself gets determined by (\ref{complementarity})).

Reformulated in terms of the function $v$ in (\ref{hodgenu}), the system (\ref{complementaritysystem}) takes the form
\begin{equation}\label{complementaritysystem1}
\begin{cases}
v\geq \psi,\\[.2cm]
d*dv\leq \textsf{t}\,{\rm vol}^n,\\[.2cm]
\displaystyle \int_M (v-\psi)\wedge (d*dv-\textsf{t}\, {\rm vol}^n)= 0,
\end{cases}
\end{equation}
and the result of the balayage then is
\begin{equation}\label{partialbalayagev}
{\rm Bal\,}(\sigma,0)=d*dv-\textsf{t}\, {\rm vol}^n.
\end{equation}

The functions $u$ and $v$ are Green's potentials up to additive constants:
\begin{equation}\label{uvG}
\begin{cases}
u=G^{\sigma-{\rm Bal}(\sigma,0)}+C,\\[.2cm]
v=-G^{{\rm Bal}(\sigma,0)}+C.
\end{cases}
\end{equation}
Since ${\rm Bal}(\sigma,0)\leq 0$, $v$ is lower semicontinuous with values in $\R\cup\{+\infty \}$, in particular $v$ is bounded from below.  
The constant $C$ (the same in both equations) is obtained from the normalization (\ref{complementarity}) of $u$, and it depends in a nonlinear way on $\sigma$.

The latter way, (\ref{complementaritysystem1}), of writing the complementarity system connects to the classical obstacle problem
(see \cite{Kinderlehrer-Stampacchia-1980, Friedman-1982}, for example), \ie the problem of finding
the smallest superharmonic function passing above a given obstacle, here represented by
$\psi$. The only difference in our case is that the solution $v$ is not really required to be superharmonic,
only to satisfy $\Delta v\leq\textsf{t}$, where $\textsf{t}>0$. 
Similarly, the function $u$ can be characterized as the smallest function $u$ satisfying $u\geq 0$ and $d*d u+\sigma \leq 0$.

Instead of asking for the smallest superharmonic   (in the classical case) function
passing above the obstacle one may ask for a function of smallest Dirichlet norm, \ie energy, passing the obstacle.
This will give the same solution. Expressed in terms of $u$ this gives  (in our case) a minimization problem which in a sense is dual to the previous one 
(\ref{minimizationu}) and is equivalent to it. It is
\begin{equation}\label{minu}
\min \int_M (du\wedge *du -2u\wedge \sigma): \quad  u\geq 0 \quad (du\in L^2(M)_1).
\end{equation}
One easily checks that the variational formulation of this problem leads to (\ref{complementaritysystem}).
Expressed in terms of $v$, (\ref{minu}) becomes
\begin{equation}\label{minv}
\min \int_M (dv\wedge *dv +2\textsf{t}v\, {\rm vol}^n): \quad  v\geq \psi \quad (dv\in L^2(M)_1).
\end{equation}

In summary, in the above set-up with $\sigma$ having finite energy, the problems (\ref{minimumenergy}), (\ref{minimizationu}), (\ref{complementaritysystem}), (\ref{complementaritysystem1}),
(\ref{minu}), (\ref{minv}) are all equivalent, and they are also equivalent to the obstacle problem of finding the smallest function $v$ satisfying
$\Delta v\leq \textsf{t}$, $v\geq \psi$, or the smallest $u$ satisfying $u\geq 0$, $d*du+\sigma\leq 0$. 
An additional equivalent minimization problem is that obtained by reformulating (\ref{minimizationu}) in terms of $v$, namely
\begin{equation}\label{minvsigma}
\min \int_M (dv\wedge *dv +2v \wedge \sigma+2\textsf{t}v\, {\rm vol}^n): \quad \Delta  v\leq \textsf{t} \quad (dv\in L^2(M)_1).
\end{equation}
The above statements will be extended and be made more precise in the next section.


\section{Partial balayage by the obstacle problem}\label{sec:relaxed}

It is important to allow point masses (Dirac measures) in the theory of partial balayage, but unfortunately these do
not have finite energy (in dimension $n\geq 2$). Therefore a more general definition of partial balayage is desirable.
It turns out that it is possible to allow $\sigma_+$ to be a completely arbitrary positive charge distribution,
but some assumptions on $\sigma_-$ are necessary, as examples in Section~\ref{sec:examples} will show. 

At first we shall assume that $\sigma$ is a charge distribution such that $\sigma_-$ has finite energy. 
With thus $\sigma_+$ allowed to be a completely arbitrary positive charge distribution there need not exist any $du$ of finite energy satisfying the side condition in
(\ref{minimizationu}), and the functional to be minimized in (\ref{minu}) need not be bounded from below.
Hence these minimization problems do not 
make sense in the present generality. However, it turns out that it is possible to work with minimization problems formulated in terms of $v$, for example (\ref{minv}).
For this we define $\textsf{t}$ by (\ref{t}) and then the potential $\psi$ by (\ref{hodgesigma}) (or (\ref{psiGsigma}) after normalization), which we reproduce as
\begin{equation}\label{psirelaxed}
d*d\psi=\sigma+\textsf{t}\,{\rm vol}^n.
\end{equation}
As the right member of (\ref{psirelaxed}) has zero net mass it follows from the general theory of metric differential equations
\cite{deRham-1984} that a potential $\psi$ satisfying (\ref{psirelaxed}) indeed exists. It becomes uniquely determined on
demanding that (\ref{psivol}) holds, but it will not necessarily have finite energy anymore.

In order that  the minimization problem (\ref{minv}) shall be useful we need to make sure that there exists at least one competing function $v$ for which the functional to be minimized
is finite. To that end, set
\begin{equation}\label{psipm}
\begin{cases}
\psi_{(+)}=-G^{\sigma_+},\\
\psi_{(-)}=-G^{\sigma_-}.
\end{cases}
\end{equation}
Then the $\psi_{(\pm)}$ are upper semicontinuous with values in $\R\cup \{-\infty\}$, in particular they are bounded from above.
With $C<\infty$ being an upper bound for $\psi_{(+)}$ we have
$$
\psi=\psi_{(+)}- \psi_{(-)}\leq C-\psi_{(-)},
$$
and it follows that the right member here is a competing function of finite energy for (\ref{minv}), as desired.

We conclude that the constraint set for (\ref{minv}), namely
\begin{equation}\label{K}
\mathcal{K}=\{v\in W^{1,2}(M): v\geq \psi\},
\end{equation}
is a non-empty closed convex cone in $W^{1,2}(M)$, and standard Hilbert space theory then ensures the existence and uniqueness of minimizer $v$
of (\ref{minv}).  Notice that the problematic features of $\sigma$ and $\psi$ (that $\sigma_+$ need not have finite energy and that  $\psi$ need not be in
$W^{1,2}(M)$) are now hidden in the set $\mathcal{K}$ and so do not cause any problems (we only had to work a little to show that $\mathcal{K}$ was non-empty).
We have now proved most of the following theorem.

\begin{theorem}\label{thm:partialbalayage1}
Let $\sigma$ be a charge distribution satisfying (\ref{sigmastrictlynegative}) and such that $\sigma_-$ has finite energy, and  let $\mathsf{t}>0$ and $\psi$
be defined by (\ref{t}) and (\ref{psirelaxed}), respectively. 
Then there is a unique minimizer $v$ of the functional
$$
J(v)= \int_M (dv\wedge *dv +2\mathsf{t}\,v\, {\rm vol}^n) \quad (v\in \mathcal{K}),
$$
and it satisfies  the complementarity system (\ref{complementaritysystem1}), in particular $\Delta v\leq \mathsf{t}$.
In the case that also $\sigma_+$ has finite energy, so that Definition~\ref{def:partialbalayage} applies, we have
\begin{equation}\label{partialbalayageprel}
{\rm Bal\,}(\sigma,0)= d*d v -\mathsf{t}\,\,{\rm vol}^n=(\Delta v- \mathsf{t})\,{\rm vol}^n.
\end{equation}
\end{theorem}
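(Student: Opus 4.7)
First I would verify existence and uniqueness of the minimizer by standard Hilbert space arguments. The text has already shown that $\mathcal{K}$ is a non-empty, closed, convex subset of $W^{1,2}(M)$. The functional $J$ is the sum of the Dirichlet energy (convex and weakly lower semicontinuous) and the bounded linear functional $v \mapsto 2\mathsf{t}\int_M v\,{\rm vol}^n$, so $J$ is convex and weakly lower semicontinuous on $W^{1,2}(M)$. For coercivity on $\mathcal{K}$, the constraint $v\geq \psi$ together with the normalization $\int_M \psi\,{\rm vol}^n = 0$ forces $\mathsf{m}(v)\geq 0$; writing $v = \mathsf{m}(v) + w$ with $\int_M w\,{\rm vol}^n = 0$ yields
$$J(v) = \int_M dw \wedge *dw + 2\mathsf{t}\,\mathsf{m}(v)\,{\rm vol}(M),$$
a sum of two nonnegative terms. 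Hence any minimizing sequence is $W^{1,2}$-bounded by Poincar\'e's inequality, and weak compactness produces a minimizer. Strict convexity of the Dirichlet term shows two minimizers can differ only by a constant, and that constant is then forced to be $0$ by the linear term.

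Next I would extract the complementarity system from the first-order optimality condition. Testing $w = v + \phi$ with arbitrary $\phi \in W^{1,2}(M)$, $\phi \geq 0$, gives the variational inequality
$$\int_M dv \wedge * d\phi + \mathsf{t} \int_M \phi\,{\rm vol}^n \geq 0,$$
and a Stokes-type partial integration (valid on the closed manifold, as recorded in Section~\ref{sec:curr-finite-energy}) recasts this as the distributional inequality $d*dv \leq \mathsf{t}\,{\rm vol}^n$, which is the second line of (\ref{complementaritysystem1}); the first line is the constraint itself. Setting $\mu = \mathsf{t}\,{\rm vol}^n - d*dv \geq 0$, the remaining orthogonality line $\int_M (v - \psi)\wedge \mu = 0$ is obtained by testing with $w_{\varepsilon,k} = v - \varepsilon\min(v-\psi,k) \in \mathcal{K}$ (noting $\min(v-\psi,k) \in W^{1,2}(M)$), letting $\varepsilon\downarrow 0$ and then $k\uparrow \infty$; this shows $\mu$ is concentrated on the coincidence set $\{v = \psi\}$.

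The main obstacle will be attaching a precise meaning to $\int_M (v-\psi)\wedge \mu$ when $\psi$ is merely a potential and need not lie in $W^{1,2}(M)$. This is handled by Section~\ref{sec:charge}: $v - \psi$ admits a lower semicontinuous representative with values in $[0, +\infty]$ (obtained from the canonical representative (\ref{canonical}) of $\psi$), $\mu$ is a nonnegative charge distribution, and so the integral is unambiguously defined in $[0, +\infty]$ with no need for a duality pairing. With this interpretation, both the distributional step ``$d*dv\leq \mathsf{t}\,{\rm vol}^n$'' and the concentration of $\mu$ in the coincidence set are meaningful and yield (\ref{complementaritysystem1}).

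Finally, in the case that $\sigma_+$ also has finite energy, so $\sigma \in W^{-1,2}(M)_n$, the functional $J$ of this theorem coincides with the one in (\ref{minv}) and $\mathcal{K}$ is exactly its admissible set. By the chain of equivalences established in Section~\ref{sec:variational}, culminating in Theorem~\ref{thm:complementarity}, problem (\ref{minv}) is equivalent to the original Definition~\ref{def:partialbalayage0}. Therefore the unique $v$ produced here agrees with the $v$ of (\ref{hodgenu})--(\ref{partialbalayagev}), and the formula $\mathrm{Bal}(\sigma,0) = d*dv - \mathsf{t}\,{\rm vol}^n$ is a direct restatement of (\ref{partialbalayagev}).
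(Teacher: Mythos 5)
Your proposal is correct and follows essentially the same route as the paper: the paper likewise gets existence and uniqueness from standard Hilbert space theory on the non-empty closed convex set $\mathcal{K}$ (established just before the theorem), obtains the complementarity system (\ref{complementaritysystem1}) ``in a straight-forward fashion from the variational formulation'', and identifies the result with ${\rm Bal\,}(\sigma,0)$ in the finite-energy case via the chain of equivalences of Section~\ref{sec:variational}; you merely fill in the details (explicit coercivity via $v=\mathsf{m}(v)+w$, truncated test functions for the orthogonality). The one step you assert without proof, that $\min(v-\psi,k)\in W^{1,2}(M)$, is true and of exactly the kind the paper itself uses without proof (cf.\ the proof of Lemma~\ref{lem:J}): once $d*dv\leq \mathsf{t}\,{\rm vol}^n$ is known, $d*d\min(v-\psi,k)\leq \sigma_-$, and a bounded function whose Laplacian is a signed measure of finite total variation has finite energy.
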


\begin{proof}
We just remark that the complementarity system (\ref{complementaritysystem1}) follows in a straight-forward fashion from the variational formulation of the minimization problem.
Everything else has already been proved. 
\end{proof}

Thus we could use the minimizer of $J$ and equation (\ref{partialbalayageprel}) as a way to extend the definition of partial balayage. However, we shall go a little bit further before
extending the definition, but Theorem~\ref{thm:partialbalayage1} will be an important ingredient.
In fact, the minimizer in Theorem~\ref{thm:partialbalayage1} can be alternatively characterized as
the smallest function $v\in \mathcal{K}$ satisfying $\Delta v\leq \textsf{t}$ (see more precisely Theorem~\ref{thm:consistence} below), 
and in this characterization we shall simply remove the requirement $v\in W^{1,2}(M)$
(appearing in the definition of $\mathcal{K}$). Accordingly, we now also abandon the assumption that $\sigma_-$ has finite energy. 
In addition, we shall allow the equality case for (\ref{sigmanegative}) 
in the discussion below.

We then end up with the following, final, definition of partial balayage.

\begin{definition}\label{def:partialbalayage2} 
Given any charge distribution $\sigma$,  let $\textsf{t}$ and $\psi$ be defined by (\ref{t}) and (\ref{psirelaxed}). 
Assume that there exists at least one function $v$ which, considered as a distribution or $0$-current, satisfies   
\begin{equation}\label{assumptionobstacleproblem}
\begin{cases}
\phantom{\Delta} v\geq \psi,\\[.2cm]
\Delta v\leq \textsf{t}.
\end{cases}
\end{equation}
Then there is a smallest such $v$, and in terms of this we define
\begin{equation}\label{partialbalayage} 
{\rm Bal\,}(\sigma,0)= (\Delta v -\textsf{t})\,{\rm vol}^n.
\end{equation}
The more general version ${\rm Bal\,}(\sigma,\lambda)$ is thereafter defined by (\ref{balgeneral}).
\end{definition}

\begin{remark}
We did not assume (explicitly) that (\ref{sigmanegative})  holds, but this is equivalent to $\textsf{t}\geq 0$,
which is contained in the assumption (\ref{assumptionobstacleproblem}). To be precise, if $\textsf{t}<0$ then there exists no $v$ at all satisfying  (\ref{assumptionobstacleproblem}), 
and if $\textsf{t}=0$ then only constant functions $v$ can satisfy  (\ref{assumptionobstacleproblem}). In the $\textsf{t}=0$ case, ${\rm Bal\,}(\sigma,0)$ exists and $=0$ if $\psi$ is
bounded from above, otherwise ${\rm Bal\,}(\sigma,0)$ does not exist.
\blsquarehere
\end{remark}

\begin{example}\label{ex:psiuppersemicontinuous}
Assume that (\ref{sigmanegative}) holds and that $G^{\sigma_-}$ is a continuous function (or even just that it is bounded from above).
Then ${\rm Bal\,}(\sigma,0)$ exists because $\psi=G^{\sigma_-}-G^{\sigma_+}$ will be bounded from above and $\textsf{t}\geq 0$, so a large constant function $v$ will do
in (\ref{assumptionobstacleproblem}).

More generally, ${\rm Bal\,}(\sigma,0)$ exists whenever $\sigma_-$ has finite energy, because in this case the cone $\mathcal{K}$ is nonempty (as was shown before
Theorem~\ref{thm:partialbalayage1}), and when $\mathcal{K}$ is nonempty a function $v$ satisfying (\ref{assumptionobstacleproblem}) is obtained by solving the 
minimization problem in Theorem~\ref{thm:partialbalayage1}. See more precisely Theorem~\ref{thm:consistence} below.
\end{example}

\begin{remark}\label{rem:Gcontinuous}
As to continuity properties of the three functions in (\ref{uvpsi}),  $v$ is always lower semicontinuous (\ie has such a representative), 
as is clear from (\ref{uvG}), but neither $\psi$ nor $u$ need to be semicontinuous in any direction.
However, if  $G^{\sigma_-}$ is assumed to be continuous, then $\psi$ is upper semicontinuous and $u$ lower semicontinuous (by (\ref{psiGsigma} and  (\ref{uvG})). 
A less obvious fact is that $v$ is fully  continuous (not only lower semicontinuous) when $G^{\sigma_-}$ is continuous.  See Lemma~3 in  \cite{Gardiner-Sjodin-2009} for the proof.
(The conclusion  also follows from Lemma~\ref{lem:bounds} below, but the proof of that lemma  uses the result in \cite{Gardiner-Sjodin-2009}.) \\ \blsquarehere
\end{remark}

Now we want to show explicitly that Definition~\ref{def:partialbalayage2} is consistent with the previous definitions of partial balayage. This is stated in
Theorem~\ref{thm:consistence} below, but first we need a lemma.

\begin{lemma}\label{lem:J}
Let $v_1$, $v_2$ be functions (or distributions) satisfying $\Delta v_j\leq \mathsf{t}$ and  $v_1\leq v_2$. Then
$$
J(v_1)\leq J(v_2).
$$
\end{lemma}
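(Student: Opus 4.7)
The plan is to reduce $J(v_2)-J(v_1)\geq 0$ to a sum of manifestly nonnegative terms by expanding the quadratic part and applying one partial integration. First I would set $w := v_2 - v_1$, which is nonnegative by hypothesis. Using $dv_2 = dv_1 + dw$ together with the symmetry $\alpha\wedge *\beta = \beta\wedge *\alpha$ on $1$-forms, I expand
$$J(v_2) - J(v_1) = \int_M dw\wedge *dw \;+\; 2\int_M dv_1\wedge *dw \;+\; 2\mathsf{t}\int_M w\,\mathrm{vol}^n.$$
Next, on the closed manifold $M$, Stokes' theorem applied to $d(w\,{*dv_1}) = dw\wedge *dv_1 + w\,d*dv_1$ converts the middle term into $-2\int_M w\,d*dv_1$, so
$$J(v_2) - J(v_1) = \int_M dw\wedge *dw \;+\; 2\int_M w\,\bigl(\mathsf{t}\,\mathrm{vol}^n - d*dv_1\bigr).$$

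The first summand is nonnegative as the squared $L^2$-norm of $dw$. The second is the integral of the nonnegative function $w$ against the $n$-form current $\mathsf{t}\,\mathrm{vol}^n - d*dv_1$, which is a positive Radon measure by exactly the hypothesis $\Delta v_1\leq \mathsf{t}$; hence that term is nonnegative as well. Adding the two gives the claim.

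The main obstacle I foresee is justifying the manipulations when $v_1, v_2$ are merely distributions, since in that generality $J(v_1)$ need not be a finite number. The reduction I would carry out is: assume $J(v_2)<\infty$, otherwise the inequality is vacuous; this forces $v_2\in W^{1,2}(M)$. Because $\mathsf{t}\,\mathrm{vol}^n - d*dv_1$ is a positive Radon measure and $w\geq 0$ has a Borel representative (bounded above by $v_2$, so lying in $L^1(M)$), the pairing $\int_M w\,(\mathsf{t}\,\mathrm{vol}^n - d*dv_1)$ is well-defined in $[0,+\infty]$ in the measure-theoretic sense of Section~\ref{sec:charge}. The Stokes step is the delicate one: I would verify it by truncating, replacing $w$ with $\min(w,k)\in W^{1,2}(M)\cap L^\infty(M)$, applying the duality pairing between $W^{1,2}(M)$ and $W^{-1,2}(M)_n$ described at the end of Section~\ref{sec:curr-finite-energy} at finite level $k$, and then passing to the limit $k\to\infty$ by monotone convergence on the right-hand side. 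This simultaneously shows that $J(v_1)<\infty$ and that the final inequality $J(v_1)\leq J(v_2)$ holds.
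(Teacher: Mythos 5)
Your finite-energy computation is correct, and it uses the same two ingredients as the paper's proof (one partial integration plus the sign condition $\mathsf{t}\,\mathrm{vol}^n-d*dv_1\geq 0$), just packaged as a quadratic expansion in $w=v_2-v_1$ rather than as the paper's chain of inequalities; a minor bonus of your version is that the identity $J(v_2)-J(v_1)=\int_M dw\wedge *dw+2\int_M w\wedge(\mathsf{t}\,\mathrm{vol}^n-d*dv_1)$ needs $\Delta v_2\leq\mathsf{t}$ nowhere, whereas the paper's chain invokes both sign conditions.

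The genuine gap is in the passage to general distributions, which is the only delicate point of the lemma, and your truncation of $w$ does not survive scrutiny. First, the assertion $\min(w,k)\in W^{1,2}(M)$ is unjustified: if $v_1\notin W^{1,2}(M)$ then $dw=dv_2-dv_1$ is not known to lie in $L^2$ on $\{w<k\}$, and there is no a priori bound on $dv_1$ there; establishing $\min(w,k)\in W^{1,2}(M)$ is essentially circular with what you are proving. Second, the duality pairing you invoke requires $\mathsf{t}\,\mathrm{vol}^n-d*dv_1\in W^{-1,2}(M)_n$, \ie that $v_1$ has finite energy --- exactly what is not assumed; and the measure-theoretic fallback is also problematic, since $w=v_2-v_1$ may be of the form $(+\infty)-(+\infty)$ on a polar set which the positive measure $\mathsf{t}\,\mathrm{vol}^n-d*dv_1$ can a priori charge (an atom of this measure sits precisely at a point where $v_1=+\infty$), cf.\ the discussion around (\ref{varphi12}). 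The paper's device avoids all of this by truncating $v_1$ and $v_2$ themselves: $\min(v_j,C)$ still satisfy $\Delta\min(v_j,C)\leq\mathsf{t}$ and $\min(v_1,C)\leq\min(v_2,C)$, they have finite energy because each $v_j$ is bounded from below and a bounded potential of a positive measure has finite energy (as in the argument following (\ref{varphi12})), and then $J(\min(v_j,C))\to J(v_j)$ monotonically as $C\to+\infty$. Alternatively, your reduction to the case $J(v_2)<\infty$ could be salvaged by a different argument you did not give: writing $v_j=G^{\mu_j}+c_j$ with $\mu_j=\mathsf{t}\,\mathrm{vol}^n-d*dv_j\geq 0$, the inequality $G^{\mu_1}\leq G^{\mu_2}+C$ integrated against $\mu_1$, together with Fubini for the kernel $G(x,y)$ (which is bounded from below), yields $\mathcal{E}(\mu_1)\leq\mathcal{E}(\mu_1,\mu_2)+C\mu_1(M)\leq\mathcal{E}(\mu_2)+C'<\infty$, so $v_1$ automatically has finite energy whenever $v_2$ does, and your finite-energy identity then applies verbatim.
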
 

\begin{proof}
Assume first that $v_1$, $v_2$ have finite energy. Then
\begin{align*}
  J(v_1) &= \int_M (dv_1\wedge *dv_1 +2\textsf{t}\,v_1)\, {\rm vol}^n= \int_M (-v_1\wedge d*dv_1 +2\textsf{t}\,v_1)\, {\rm vol}^n
  \\
         &= \int_M (v_1(\textsf{t}-\Delta v_1 +\textsf{t}\,v_1)\, {\rm vol}^n\leq \int_M (v_2(\textsf{t}-\Delta v_1) +\textsf{t}\,v_1)\,
           {\rm vol}^n \\
         &= \int_M (v_1(\textsf{t}-\Delta v_2) +\textsf{t}\,v_2)\, {\rm vol}^n\leq  \int_M (v_2(\textsf{t}-\Delta v_2)
           +\textsf{t}\,v_2)\, {\rm vol}^n \\
         &= \int_M (dv_2\wedge *dv_2 +2\textsf{t}v_2\, {\rm vol}^n)=J(v_2).
\end{align*}
When $v_1$, $v_2$ are allowed to have infinite energy, the in\-equality ${J(v_1)\leq J(v_2)}$ remains valid, with $+\infty$ as possible values.
In fact, if $C$ is any constant then $\min(v_j,C)$ ($j=1,2$) have finite energy and satisfy the remaining assumptions above, so we obtain
$J(\min(v_1,C))\leq J(\min(v_2,C))$. Letting here $C\to +\infty$ gives the desired inequality, by monotone convergence.
\end{proof} 

\begin{theorem}\label{thm:consistence}
Assume, for a given charge distribution $\sigma$,  that ${\rm Bal\,}(\sigma,0)$ exists as in Definition~\ref{def:partialbalayage2}, and let $v$ be
the function in (\ref{partialbalayage}). Then there are two possibilities:
\begin{itemize}

\item[(i)] $v\in W^{1,2}(M)$. In this case  $v\in\mathcal{K}$ and $v$ is the unique minimizer of $J$ in Theorem~\ref{thm:partialbalayage1}.

\item[(ii)] $v\notin W^{1,2}(M)$. In this case the cone $\mathcal{K}$ is empty and there is no minimizer at all of $J$.

\end{itemize}
If $\sigma$ has finite energy (\ie  $\psi\in W^{1,2}(M)$), then the first case  $(i)$ above applies and the new definition (Definition~\ref{def:partialbalayage2}) of partial balayage agrees with the old one
(Definition~\ref{def:partialbalayage0} or Definition~\ref{def:partialbalayage}). 
\end{theorem}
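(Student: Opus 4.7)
My plan is to compare $v$ (the smallest function in Definition~\ref{def:partialbalayage2}) with the minimizer $v^*$ of $J$ from Theorem~\ref{thm:partialbalayage1} (which exists whenever $\mathcal{K}\neq\emptyset$), and to show that in case $(i)$ they coincide, whereas in case $(ii)$ the assumption $\mathcal{K}\neq\emptyset$ leads to a contradiction. The first step is to establish that whenever $\mathcal{K}\neq\emptyset$, the minimizer $v^*$ is in fact the \emph{smallest} element of $\mathcal{K}$ satisfying $\Delta w\leq\mathsf{t}$ and $w\geq\psi$. Indeed, given any such competitor $w\in\mathcal{K}$, set $\tilde{w}=\min(w,v^*)$. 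Then $\tilde{w}\in\mathcal{K}$ (the lattice property of $W^{1,2}(M)$ together with $\tilde{w}\geq\psi$), and the distributional inequality $\Delta\tilde{w}\leq\mathsf{t}$ is preserved under $\min$. Since $\tilde{w}\leq v^*$, Lemma~\ref{lem:J} gives $J(\tilde{w})\leq J(v^*)$; uniqueness of the minimizer then forces $\tilde{w}=v^*$, i.e.\ $w\geq v^*$.

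For case $(i)$, from $v\in W^{1,2}(M)$ and $v\geq\psi$ one has $v\in\mathcal{K}$, and the characterization of $v^*$ as the smallest such element yields $v\geq v^*$. Conversely $v\leq v^*$, because $v$ is the infimum over the broader family (without the $W^{1,2}$ restriction) to which $v^*$ also belongs. Hence $v=v^*$, which by Theorem~\ref{thm:partialbalayage1} is the unique minimizer of $J$.

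For case $(ii)$, I would argue by contradiction: assume $\mathcal{K}\neq\emptyset$, so that $v^*$ exists and $v\leq v^*$. Applying Lemma~\ref{lem:J} in its general form (allowing $+\infty$) gives $J(v)\leq J(v^*)<\infty$. From $\psi\leq v\leq v^*\in L^2(M)\subset L^1(M)$ together with the normalization $\int_M\psi\,\mathrm{vol}^n=0$, the integral $\int_M v\,\mathrm{vol}^n$ is finite, which in particular forces $v<\infty$ almost everywhere; then $J(v)<\infty$ yields $\int_M dv\wedge *dv<\infty$. Combined with the Poincar\'e inequality on the closed manifold $M$, this forces $v\in W^{1,2}(M)$, contradicting the standing assumption. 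So $\mathcal{K}$ must be empty.

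The consistency claim follows immediately: if $\sigma$ has finite energy, then $\psi\in W^{1,2}(M)$, so $\psi\in\mathcal{K}$, placing us in case $(i)$; formula (\ref{partialbalayage}) then reads $d*dv-\mathsf{t}\,\mathrm{vol}^n$ with $v=v^*$, matching (\ref{partialbalayagev}). I expect the main subtlety to be the implication ``$\mathcal{K}$ non-empty $\Rightarrow v\in W^{1,2}(M)$'' in case $(ii)$: without any a priori regularity of $v$, extracting the Sobolev bound from $J(v)<\infty$ requires careful use of the two-sided envelope $\psi\leq v\leq v^*$, the normalization of $\psi$, and a Poincar\'e-type inequality on $M$.
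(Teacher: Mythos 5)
Your proposal is correct and follows essentially the same route as the paper: compare $v$ with the minimizer of $J$, exploiting that the constraints $\{w\geq\psi,\ \Delta w\leq\mathsf{t}\}$ are stable under $\min$, and then combine the monotonicity Lemma~\ref{lem:J} with uniqueness of the minimizer. The only difference is organizational: the paper gets $v\leq w$ in one step (apply the minimality of $v$ to $\min(v,w)$) and concludes $v=w$ at once, whereas you additionally isolate the ``smallest element of $\mathcal{K}$'' characterization of the minimizer and, in case $(ii)$, spell out the bootstrap $J(v)\leq J(v^*)<\infty$ together with $\psi\leq v\leq v^*$ and Poincar\'e forcing $v\in W^{1,2}(M)$ --- a detail the paper compresses into ``hence $v=w$, since the minimizer of $J$ is unique.''
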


\begin{proof}
Assume that $\mathcal{K}$ is nonempty and let $w\in\mathcal{K}$ be the minimizer of $J$.
Then   $\Delta w\leq \textsf{t}$ by Theorem~\ref{thm:partialbalayage1}, and since also $\Delta v\leq \textsf{t}$ it follows that $\Delta\min(v,w)\leq \textsf{t}$. 
Similarly, $\psi\leq \min(v,w)$. Since $v$ was the smallest function with these properties it follows that $v\leq \min(v,w)$, \ie $v\leq w$.

Now Lemma~\ref{lem:J} shows that $J(v)\leq J(w)$, hence that 
$v=w$, since the minimizer of $J$ is unique. All statements of  the theorem now follow.
\end{proof}

In the setting of Definition~\ref{def:partialbalayage2}, the function $u=v-\psi$ still exists, even though it need not have finite energy,
and (\ref{partialbalayage}) becomes (\ref{partialbalayageu}) when expressed in terms of $u$.
The two complementarity systems (\ref{complementaritysystem}), (\ref{complementaritysystem1}) (which are equivalent)
remain valid, but need some reinterpretation. 
Previously they arose from the variational formulations of minimum norm problems in a Hilbert space, and they more exactly express that the function involved,
$u$ or $v$, is the result of an orthogonal projection onto a convex cone (for example $\mathcal{K}$). 
Setting 
\begin{equation}\label{muusigma}
\mu=-(d*du+\sigma)=-{\rm Bal}(\sigma,0)
\end{equation} 
and writing the complementarity system  (\ref{complementaritysystem}) (for example) as 
\begin{equation}\label{umu}
u\geq 0,\quad\mu\geq 0, \quad \int_M u\wedge \mu=0,
\end{equation}
the last  identity exactly expresses the orthogonality.

For the approach taken in Definition~\ref{def:partialbalayage2} there is another kind of variational formulation, namely saying that wherever the solution $v$
is not in contact with the obstacle $\psi$, it could have been made smaller, by a Poisson kind modification, unless $\Delta v$ is already at its maximum
value, $\Delta v=\textsf{t}$. This gives the following version of (\ref{umu}):
\begin{equation}\label{umuomega}
u\geq 0,\quad\mu\geq 0, \quad  \omega \cap {\rm supp}\,\mu =\emptyset.
\end{equation}
Here we have introduced the \emph{non-coincidence} set $\omega$ for the obstacle problem, defined as that open set in which there is definitely no contact between the solution $v$ and the obstacle $\psi$: 
\begin{equation}\label{noncoincidence1}
\omega =\{x\in M: \exists\, \varphi \in C^\infty(M) \text{ such that } 0\leq \varphi \leq u,\, \varphi (x)>0 \}.
\end{equation}
On the complementary set $M\setminus \omega$, the \emph{coincidence} set, the solution $v$ may exert a pressure on the obstacle, and this pressure is
represented by the measure $\mu$. 
It may happen that there is a nonempty set $M\setminus ( \omega \cup {\rm supp}\,\mu)$ left over, and on that set $v$ is in contact with the obstacle but exerts no pressure.
In \cite{Hedenmalm-Makarov-2013} points in that set are called ``shallow points''.

One may remark that (\ref{umuomega}) is a somewhat crude version of (\ref{umu}), but (\ref{umu}) itself need not make sense in the present generality. 
The original meaning of this integral is actually as a duality pairing $\left< \mu, u \right>$ between $\mu\in W^{-1,2}(M)_n$
and $u\in W^{1,2}(M)$, but one might also try to interpret it as a measure theoretic integral, which then could be written $\int u\,d\mu$.
However, $u$ is just the potential of a charge distribution, which means that it is locally the difference between two superharmonic functions. Such a function may
at a small set (of capacity zero) be of the form $(+\infty)-(+\infty)$, hence have no definite value there. And if $\mu$ loads such a point the integral will not be well-defined.

If $G^{\sigma_-}$ is assumed to be continuous the situation is better. Then $u$ is lower semicontinuous (see Remark~\ref{rem:Gcontinuous}), which means that the 
auxiliary function $\varphi$ in (\ref{noncoincidence1}) is not needed and $\omega$ can be defined directly as
\begin{equation}\label{noncoincidence}
\omega =\{x\in M: u (x)>0 \}.
\end{equation}
In this case the integral in (\ref{umu}) makes sense as a measure theoretic integral, and (\ref{umu}) and (\ref{umuomega}) then are equivalent.

The following simple lemma shows that the smaller $\sigma$ is, the bigger is the chance that  ${\rm Bal\,}(\sigma,0)$ exists.

\begin{lemma}\label{lem:sigmatilde}
Let $\sigma_j$ ($j=1,2$) be charge distributions such that $\sigma_1\leq \sigma_2$. Then, if  ${\rm Bal\,}(\sigma_2,0)$ exists so does ${\rm Bal\,}(\sigma_1,0)$.
\end{lemma}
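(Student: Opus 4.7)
The plan is to construct a candidate $v_1$ for the obstacle problem associated to $\sigma_1$ directly from a given candidate $v_2$ for $\sigma_2$, by adding the Green's potential of the positive difference $\rho := \sigma_2 - \sigma_1 \geq 0$. Setting $\mathsf{t}_j = -\mathsf{m}(\sigma_j)$ and letting $\psi_j$ be the obstacle from (\ref{psirelaxed}), one has $\mathsf{t}_1 = \mathsf{t}_2 + \mathsf{m}(\rho) \geq \mathsf{t}_2$, and by hypothesis some distribution $v_2$ satisfies $v_2 \geq \psi_2$ and $\Delta v_2 \leq \mathsf{t}_2$.

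The first step is the identification $\psi_1 - \psi_2 = G^{\rho}$. Even without a finite energy assumption on $\rho$, the Green's potential $G^\rho(x) = \int_M G(x,y)\,\rho(y)$ is well-defined as a lower semicontinuous function with values in $\R \cup \{+\infty\}$, bounded below on the compact manifold $M$, and the distributional identity $-d*d G^\rho = \rho - \mathsf{m}(\rho)\,\mathrm{vol}^n$ still holds. Applying $d*d$ to $\psi_1 - \psi_2 - G^\rho$ and using $\mathsf{m}(\rho) = \mathsf{t}_1 - \mathsf{t}_2$ shows that this quantity is a constant, and the constant vanishes by the normalizations (\ref{psivol}) and (\ref{Gvol}).

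With this in hand, I would define $v_1 := v_2 + G^\rho$ and verify the two conditions of Definition~\ref{def:partialbalayage2} by direct arithmetic. The obstacle inequality is
\[
v_1 - \psi_1 \;=\; v_2 - \psi_2 \;\geq\; 0,
\]
and, in the sense of currents,
\[
d*dv_1 \;\leq\; \mathsf{t}_2\,\mathrm{vol}^n - \rho + (\mathsf{t}_1-\mathsf{t}_2)\,\mathrm{vol}^n \;=\; \mathsf{t}_1\,\mathrm{vol}^n - \rho \;\leq\; \mathsf{t}_1\,\mathrm{vol}^n,
\]
since $\rho \geq 0$. Thus $v_1$ witnesses the existence of ${\rm Bal\,}(\sigma_1,0)$.

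The only potential obstacle is the bookkeeping around Green's potentials of positive, possibly infinite-energy, charge distributions: one needs $G^\rho$ to be a bona fide object on $M$ satisfying the stated distributional Poisson equation, and $v_2 + G^\rho$ to be a sensible distribution (both are bounded below, so the pointwise sum is well-defined in $\R \cup \{+\infty\}$). This is already implicit in the kernel representation (\ref{GG}) and in the discussion in Section~\ref{sec:charge}, so once that framework is taken for granted the proof is essentially a two-line computation.
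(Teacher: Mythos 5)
Your proof is correct and is essentially the paper's own argument: since $\psi_1-\psi_2=G^{\rho}$ under the normalizations (\ref{psivol}) and (\ref{Gvol}), your candidate $v_1=v_2+G^{\rho}$ is exactly the function $v_1=v_2-\psi_2+\psi_1$ that the paper uses, and your verification of $v_1\geq \psi_1$ and $d*dv_1\leq \mathsf{t}_1\,{\rm vol}^n$ is the same two-line computation. The additional care you take to make $G^{\rho}$ pointwise well-defined (lower semicontinuous, bounded below) is a harmless refinement of the paper's purely current-level argument.
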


\begin{proof}
Let $\textsf{t}_j$, $\psi_j$ be defined in terms of $\sigma_j$ as in Definition~\ref{def:partialbalayage2}. By assumption there exists a function $v_2$ satisfying $v_2\geq \psi_2$,
$\Delta v_2\leq \textsf{t}_2$. Set $v_1=v_2-\psi_2+\psi_1$. Then $v_1\geq \psi_1$ and
\begin{align*}
  d*dv_1 &= d*dv_2-(\sigma_2+\textsf{t}_2 {\rm vol}^n)+(\sigma_1+\textsf{t}_1 {\rm vol}^n) \\
         &= d*dv_2-\textsf{t}_2 {\rm vol}^n+\sigma_1-\sigma_2+ \textsf{t}_1 {\rm vol}^n\leq \textsf{t}_1 {\rm vol}^n.
\end{align*}
Thus ${\rm Bal\,}(\sigma_1,0)$ exists.
\end{proof}

The above result shows (among other things) that ${\rm Bal\,}(\sigma,0)$ may exist even if $\sigma_-$ has infinite energy: starting from any $\sigma$ 
($=\sigma_2$ in the lemma) for which
${\rm Bal\,}(\sigma,0)$ exists one may subtract any positive $n$-form current, even one with infinite energy, and the partial balayage still exists.
On the other hand, it is easy to give examples (satisfying (\ref{sigmastrictlynegative})) for which ${\rm Bal\,}(\sigma,0)$ does not exist. If for instance, in
dimension $n\geq 2$, $\sigma_-$ just consists of point masses, then  ${\rm Bal\,}(\sigma,0)$ will not exist, because even though $\sigma_+$ fits
into $\sigma_-$ in principle, it becomes too expensive to move it there (the cost in terms of energy would be infinite).

Illustrations will be given in Section~\ref{sec:examples}. The following result may be close to sharp.
\begin{theorem}\label{thm:sigmatau}
Assume that there exists a charge distribution $\tau$ of finite energy such that
$$
0\leq \tau\leq \sigma_- \quad \text{and}\quad \int_M \sigma_+\leq \int_M \tau.
$$
Then ${\rm Bal\,}(\sigma,0)$ exists.
\end{theorem}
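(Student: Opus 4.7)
My plan is to realize this as an immediate consequence of the monotonicity result in Lemma~\ref{lem:sigmatilde} together with the existence statement of Example~\ref{ex:psiuppersemicontinuous}. The idea is to sandwich $\sigma$ between itself and a larger charge distribution whose negative part is the finite-energy $\tau$, so that existence is handed to us essentially for free.

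First, I would introduce the auxiliary charge distribution
\[
\tilde\sigma = \sigma_+ - \tau.
\]
Since $0\leq \tau\leq \sigma_-$, we have $\sigma_- - \tau\geq 0$, hence
\[
\tilde\sigma - \sigma = \sigma_- - \tau \geq 0,
\]
i.e.\ $\sigma \leq \tilde\sigma$. Also, because $\sigma_+$ and $\sigma_-$ are mutually singular and $\tau\leq \sigma_-$, the measure $\tau$ is singular with respect to $\sigma_+$. Consequently, the Jordan decomposition of $\tilde\sigma$ is just $\tilde\sigma_+ = \sigma_+$ and $\tilde\sigma_- = \tau$. In particular, $\tilde\sigma_-$ has finite energy by hypothesis.

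Next I would check the total mass condition: by assumption $\int_M \sigma_+ \leq \int_M \tau$, which gives
\[
\int_M \tilde\sigma = \int_M \sigma_+ - \int_M \tau \leq 0,
\]
so that (\ref{sigmanegative}) holds for $\tilde\sigma$. The existence of ${\rm Bal}(\tilde\sigma,0)$ then follows from Example~\ref{ex:psiuppersemicontinuous}, which asserts existence whenever the negative part has finite energy (the equality case $\int_M\tilde\sigma=0$ is also covered by Definition~\ref{def:partialbalayage2}, where the balayage is simply $0$).

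Finally, I invoke Lemma~\ref{lem:sigmatilde} with $\sigma_1 = \sigma$ and $\sigma_2 = \tilde\sigma$: since $\sigma\leq \tilde\sigma$ and ${\rm Bal}(\tilde\sigma,0)$ exists, so does ${\rm Bal}(\sigma,0)$. There is no real obstacle here; the only subtlety worth mentioning is the identification of the Jordan decomposition of $\tilde\sigma$, which relies on the mutual singularity of $\sigma_+$ and $\sigma_-$ together with $\tau\leq \sigma_-$. Once that is noted, the proof is two lines.
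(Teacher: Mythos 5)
Your argument is, in its main line, exactly the paper's proof: the paper also introduces $\sigma_+-\tau$, reduces via the monotonicity Lemma~\ref{lem:sigmatilde}, and quotes Theorem~\ref{thm:partialbalayage1} for the existence of ${\rm Bal}(\sigma_+-\tau,0)$; your appeal to Example~\ref{ex:psiuppersemicontinuous} is the same input in disguise, since that example is itself justified by Theorem~\ref{thm:partialbalayage1}. Your identification of the Jordan decomposition $\tilde\sigma_+=\sigma_+$, $\tilde\sigma_-=\tau$ via mutual singularity is correct, and is in fact a point the paper glosses over when it applies a theorem whose hypothesis is phrased in terms of the negative part.

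The one step that does not hold up is your parenthetical disposal of the equality case $\int_M\sigma_+=\int_M\tau$. In that case $\tilde{\mathsf{t}}=0$, and both Theorem~\ref{thm:partialbalayage1} and the finite-energy clause of Example~\ref{ex:psiuppersemicontinuous} assume the \emph{strict} inequality (\ref{sigmastrictlynegative}), so neither applies to $\tilde\sigma$. Moreover, by the Remark following Definition~\ref{def:partialbalayage2}, when $\mathsf{t}=0$ the only competitors $v$ are constants, so ${\rm Bal}(\tilde\sigma,0)$ exists (and then equals $0$) \emph{if and only if} $\tilde\psi$ is bounded from above. Here $\tilde\psi=G^{\tau}-G^{\sigma_+}$, and finite energy of $\tau$ does not force $G^{\tau}$ to be bounded: there are positive measures of finite energy whose Green's potentials are unbounded (concentrate suitably decaying masses on rapidly shrinking balls). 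So your claim that the equality case ``is also covered by Definition~\ref{def:partialbalayage2}, where the balayage is simply $0$'' is unjustified, and the reduction through $\tilde\sigma$ genuinely breaks down there: note that $\int_M\tilde\sigma=0$ can occur even when $\int_M\sigma<0$ strictly, so this is not an empty corner case. In fairness, the paper's own two-line proof has the same blind spot --- it cites Theorem~\ref{thm:partialbalayage1}, whose hypothesis is (\ref{sigmastrictlynegative}) --- so both arguments really establish the theorem under $\int_M\sigma_+<\int_M\tau$; this is consistent with the paper's own caveat that the result ``may be close to sharp.'' If you want a clean statement, either assume the strict inequality or add the hypothesis that $G^{\tau}$ (or $\tilde\psi$) is bounded from above in the borderline case.
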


\begin{proof}
By Lemma~\ref{lem:sigmatilde} it is enough to show that ${\rm Bal\,}(\sigma_+-\tau,0)$ exists. And that follows from Theorem~\ref{thm:partialbalayage1}.
\end{proof}


\section{Structure of partial balayage}\label{sec:structure}

In the sequel we shall always work with Definition~\ref{def:partialbalayage2} of partial balayage.
The natural bounds for it are given by

\begin{lemma}\label{lem:bounds} Whenever ${\rm Bal}(\sigma,0)$ exists it is subject to the bounds
\begin{equation}\label{bounds}
-\sigma_-\leq {\rm Bal\,}(\sigma,0)\leq 0.
\end{equation}
More generally,
\begin{equation}\label{boundsgeneral}
\min(\sigma, \lambda)  \leq {\rm Bal\,}(\sigma,\lambda)\leq \lambda.
\end{equation}
\end{lemma}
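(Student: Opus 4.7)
The upper bound $\mathrm{Bal}(\sigma,0)\le 0$ is immediate from Definition~\ref{def:partialbalayage2}: by construction $v$ satisfies $\Delta v\le\textsf{t}$, so $(\Delta v-\textsf{t})\,{\rm vol}^n\le 0$ as $n$-form currents. For the lower bound, set $\mu=-{\rm Bal}(\sigma,0)=\textsf{t}\,{\rm vol}^n-d{*}dv\ge 0$ and $u=v-\psi\ge 0$. Combining these definitions with (\ref{psirelaxed}) gives the distributional identity
\[
d{*}du \;=\; d{*}dv - d{*}d\psi \;=\; -\sigma - \mu,
\]
so $\mu\le\sigma_-$ is equivalent to asserting that the $n$-form current $\sigma_+ + d{*}du$ is nonnegative.

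The core of the argument is the complementarity relation (\ref{umuomega}): ${\rm supp}\,\mu$ is disjoint from the open noncoincidence set $\omega=\{v>\psi\}$. On $\omega$ we have $\mu=0\le\sigma_-$ trivially. On the interior of the complement $M\setminus\omega$, the function $u$ vanishes throughout a whole neighborhood, whence $d{*}du=0$ there; the identity above then yields $\mu=-\sigma$, and since $\mu\ge 0$ this forces $\sigma\le 0$, giving $\mu=\sigma_--\sigma_+\le\sigma_-$ on that neighborhood. Hence only the topological boundary of the coincidence set can cause trouble.

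The principal obstacle is precisely this boundary-layer contribution: for generic $\sigma_\pm$ one must rule out that $d{*}du$ carries extra singular mass on $\partial\{u=0\}$, which, if present, could spoil $\mu\le\sigma_-$ there. In the regular case when $G^{\sigma_-}$ is continuous, Lemma~3 of \cite{Gardiner-Sjodin-2009} (mentioned in Remark~\ref{rem:Gcontinuous}) gives continuity of $v$, and classical obstacle-problem regularity then prevents any spurious concentration of $d{*}du$ on this boundary, so the inequality holds in full. For a general $\sigma$, I would reduce to the regular case by approximation: replace $\sigma_-$ by a mollified version $\sigma_-^\varepsilon$ with continuous Green's potential, apply the bound for each $\varepsilon$, and pass to the limit, invoking Lemma~\ref{lem:sigmatilde} for existence along the approximating sequence and the uniqueness in Definition~\ref{def:partialbalayage2} to identify the limit as the desired $\mu$. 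This regularity-plus-approximation step is the one genuinely delicate part.

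With $-\sigma_-\le{\rm Bal}(\sigma,0)\le 0$ established, the general bound (\ref{boundsgeneral}) follows at once from the covariance identity (\ref{balgeneral}):
\[
{\rm Bal}(\sigma,\lambda) \;=\; {\rm Bal}(\sigma-\lambda,0)+\lambda \;\ge\; -(\sigma-\lambda)_- + \lambda \;=\; \lambda - (\lambda-\sigma)_+ \;=\; \min(\sigma,\lambda),
\]
and analogously ${\rm Bal}(\sigma,\lambda)={\rm Bal}(\sigma-\lambda,0)+\lambda\le 0+\lambda=\lambda$.
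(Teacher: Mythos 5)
Your skeleton is in fact close to the paper's: the upper bound read off from $\Delta v\leq \mathsf{t}$, the reduction of (\ref{boundsgeneral}) to (\ref{bounds}) via (\ref{balgeneral}), and the split into a regular case ($G^{\sigma_-}$ continuous) followed by an approximation step all match, and your interior analysis (that $\mu\leq\sigma_-$ is trivial on $\omega$ and on the interior of the coincidence set) is sound. The gap sits exactly where you locate the difficulty, and in both places your proposed resolution does not go through. In the regular case, ``classical obstacle-problem regularity'' is simply not available at this level of generality: $\sigma_+$ is an arbitrary positive charge distribution (point masses are the motivating example), so there is no $W^{2,p}$ theory forbidding $d*du$ from charging $\partial\{u=0\}$. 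Indeed the paper itself stresses, in connection with (\ref{nulambda}), that the statement ``$d*du\geq 0$ on the set where $u=0$'' is not easy and cites \cite{Brezis-Ponce-2004} for it --- and that is essentially the boundary-layer inequality you need. What the paper actually does in the regular case is potential-theoretic rather than regularity-theoretic: it considers the continuous Green's potential $V=G^{\sigma_-}-v+C$ of ${\rm Bal\,}(\sigma_+,\sigma_-)$, which satisfies $V\leq G^{\sigma_+}$, and proves $\Delta V\leq s$ with $s=\int_M\sigma_+$ by verifying a mean-value inequality pointwise, following Theorem~4 of \cite{Gardiner-Sjodin-2009} (see also the proof of Theorem~2.1 in \cite{Gustafsson-Sakai-1994}). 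As written, you have asserted this step rather than proved it.

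The approximation step is also materially different from the paper's, and yours is unjustified. You mollify $\sigma_-$; the paper instead approximates the potential $V$ itself from below by an \emph{increasing} sequence of potentials (superharmonic minus a smooth compensating term) and applies the regular-case argument to each, monotonicity being what lets the mean-value inequalities pass to the limit. Two concrete problems with your scheme: first, Lemma~\ref{lem:sigmatilde} does not apply, since mollification does not preserve the pointwise order between $\sigma$ and $\sigma_+-\sigma_-^\varepsilon$ (existence for the mollified data is in fact fine directly from Example~\ref{ex:psiuppersemicontinuous}, so this part is repairable); second, and decisively, the paper contains no stability theorem for ${\rm Bal\,}(\cdot,0)$ under such perturbations, the solutions $v^\varepsilon$ are not monotone in $\varepsilon$, and invoking ``uniqueness in Definition~\ref{def:partialbalayage2}'' identifies the limit only after you have shown that $v^\varepsilon$ converges to the smallest admissible $v$ for the limiting obstacle --- which is the whole difficulty, not a footnote to it. Your concluding derivation of (\ref{boundsgeneral}) from (\ref{bounds}) via (\ref{balgeneral}), including the identity $\lambda-(\lambda-\sigma)_+=\min(\sigma,\lambda)$, is correct and coincides with the paper's reduction.
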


\begin{proof}
In view of (\ref{balgeneral}) the two bounds (\ref{bounds}) and (\ref{boundsgeneral}) are equivalent, so we need only discuss the first one. 
The upper bound holds by definition, but the lower bound, which can be written as
\begin{equation}\label{balsigmasigma}
{\rm Bal\,}(\sigma_+,\sigma_-)\geq 0,
\end{equation}
is not completely trivial. However, it turns out that the proof used in \cite{Gardiner-Sjodin-2009}
(see Theorem~4 there) for the Euclidean case carries over with minor changes. 

A sketch of the proof goes as follows.
The Green's potential of ${\rm Bal\,}(\sigma_+,\sigma_-)$ above is $V= G^{\sigma-}-v+C$, where $C$ is the constant in (\ref{uvG}), and it satisfies
$V\leq G^{\sigma_+}$.
First one proves the statement (\ref{balsigmasigma}) under the assumption that $G^{\sigma_-}$
is continuous. This entails that also  $V$  is continuous (see Remark~\ref{rem:Gcontinuous} above). 
The assertion to be proved  amounts to showing that $\Delta V\leq s$, where $s=\int_M\sigma_+$, and that can be verified by
checking a corresponding mean-value property. See  \cite{Gardiner-Sjodin-2009}, or \cite{Gustafsson-Sakai-1994}
(proof of Theorem~2.1), for details.

In case $G^{\sigma_-}$ is not continuous one approximates $V$ from below
by an increasing sequence of potentials (superharmonic minus a smooth compensating
term) and applies the previous argument to each of these. The details are given in  \cite{Gardiner-Sjodin-2009}.
\end{proof}

The more detailed structure of partial balayage says, roughly, that
only the two extremal values in (\ref{bounds}) or
(\ref{boundsgeneral}) are really attained.  Various ways of
formulating such a result, in the terminology of either the obstacle
problem or some form of balayage, appear in
\cite{Kinderlehrer-Stampacchia-1980, Friedman-1982,
  Gustafsson-Sakai-1994, Saff-Totik-1997, Sjodin-2007,
  Gardiner-Sjodin-2009, Hedenmalm-Makarov-2013}, to mention just a few
sources.  Working in the general setting of (\ref{boundsgeneral}) we
can always write
\begin{equation}\label{generalstructure}
{\rm Bal\,}(\sigma,\lambda)=\lambda|_\Omega + \sigma|_{M\setminus\Omega}+\nu, 
\end{equation}
where $\Omega$ is defined to be the largest open set in which ${\rm Bal\,}(\sigma,\lambda)=\lambda$, \ie 
$$
\Omega=M\setminus{\rm supp\,}(\lambda-{\rm Bal\,}(\sigma, \lambda)).
$$ 
This makes the first two terms in the right member of (\ref{generalstructure}) well-defined, so the equation as a whole can be viewed
simply as the definition of the unspecified term $\nu$ (for which ${\rm supp\,}\nu\subset M\setminus \Omega$ by definition of $\Omega$). 

Recall now the expression for partial balayage in terms of the potential $u$: 
$$
{\rm Bal\,}(\sigma,\lambda)=d*du +\sigma
$$
Here we remark that $u$, as well as $\omega$, $\Omega$ and $\nu$, remains unchanged under transformations as in
(\ref{covariance}). So there is no ambiguity when speaking about   $u$, $\omega$, $\Omega$, $\nu$ when changing
between ${\rm Bal\,}(\sigma,\lambda)$ and ${\rm Bal\,}(\sigma-\lambda,0)$, for example.
From the complementarity (\ref{complementaritysystem}) or (\ref{umuomega}) (where ${\rm supp\,}\mu=M\setminus \Omega$) 
we see immediately that  $\omega\subset\Omega$,  $\omega$ being the noncoincidence set (\ref{noncoincidence1}).
Hence $u=0$ in the open set $M\setminus (\Omega\cup\partial\Omega)$, so
\begin{equation}\label{suppnu}
{\rm supp\,}\nu \subset \partial\Omega.
\end{equation} 

Now we want to make this more precise, and eventually prove that $\nu=0$ under mild conditions.
Because of the gauge freedom (\ref{covariance}) we may assume that $\sigma,\lambda\geq 0$. This simplifies
reference to other work, in particular \cite{Sjodin-2007} and \cite{Gardiner-Sjodin-2009}, which will be crucial.

A first assumption needed is that $G^\lambda$ is a continuous function. Under this condition  (\ref{noncoincidence1})
takes the simpler form  (\ref{noncoincidence}). In addition,  (\ref{suppnu}) can be sharpened
to saying that $\nu$ lives on a subset of $\partial\Omega$ having ${\rm vol}^n$ measure zero, in other words,  $\nu$ is
singular with respect to ${\rm vol}^n$. In fact, when $G^\lambda$ is continuous, partial balayage can be
connected to the reduction operation in classical potential theory, as shown in  \cite{Gardiner-Sjodin-2009}
(see Theorem~7 there). And by using either a direct argument (as in Theorem~10 in \cite{Gardiner-Sjodin-2009}) or by referring to known results
\cite{Oksendal-1972, Bourgain-1987, Hansen-Hueber-1988} saying that harmonic measure
(defined in terms of reduction) lives on sets of Lebesgue measure zero, it follows that $\nu$ is singular with respect to ${\rm vol}^n$.

 \begin{remark}\label{rem:reduction}
The reduction operation in potential theory is, like partial balayage, defined in terms of an obstacle problem.
However, that obstacle problem goes in the opposite direction (compared to that for partial balayage), and for this reason
some minor assumptions (like continuity of $G^\lambda$ above) are needed to connect the two theories. \blsquarehere
\end{remark}

Next,  in \cite{Gardiner-Sjodin-2009} (again Theorem~7 there) the authors obtain bounds for $\nu$:
\begin{equation}\label{nulambda}
0\leq \nu\leq \lambda.
\end{equation}
Here the upper bound is actually a direct consequence of (\ref{boundsgeneral}), but the lower bound is not that easy to prove, despite it looks very natural
(it amounts to saying that $d*du\geq 0$ on the set where $u=0$; cf. \cite{Brezis-Ponce-2004}).
Now, if we make the additional assumption that  $\lambda$ is absolutely continuous with respect to ${\rm vol}^n$, then (\ref{nulambda}) together with
$\nu$ being singular with respect to ${\rm vol}^n$ forces $\nu$ to be zero. Hence we have the following theorem, which essentially is a restatement
of Theorem~10 in \cite{Gardiner-Sjodin-2009}.

\begin{theorem}\label{thm:structure} 
If $\sigma,\lambda\geq 0$, $G^{\lambda}$ is continuous and $\lambda$
is absolutely continuous with respect to ${\rm vol}^n$,
then the structure formula 
\begin{equation}\label{purestructure}
{\rm Bal\,}(\sigma,\lambda)=\lambda|_\Omega + \sigma|_{M\setminus\Omega}, 
\end{equation}
holds.

In particular, with $\sigma$  a general charge distribution, if $G^{\sigma_-}$ is continuous and $\sigma_-$
is absolutely continuous with respect to ${\rm vol}^n$, then the measure
\begin{equation}\label{muBal0}
\mu=-{\rm Bal\,}(\sigma,0),
\end{equation} 
has the simple structure
\begin{equation}\label{structure}
\mu=\sigma_-|_{{\rm supp\,}\mu}.
\end{equation}

\end{theorem}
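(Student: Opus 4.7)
The plan is to verify that the ``extra'' term $\nu$ in the generic decomposition
$$
\mathrm{Bal}(\sigma,\lambda)=\lambda|_\Omega+\sigma|_{M\setminus\Omega}+\nu,\qquad \mathrm{supp}\,\nu\subset\partial\Omega,
$$
(recorded already in (\ref{generalstructure})--(\ref{suppnu})) vanishes under the hypotheses of the theorem, and then to derive the $\lambda=0$ version from (\ref{balgeneral}). By gauge freedom (\ref{covariance}) there is no loss in assuming $\sigma,\lambda\geq 0$ throughout the first assertion, and the continuity of $G^\lambda$ ensures, via Remark~\ref{rem:Gcontinuous}, that $u=v-\psi$ is lower semicontinuous, so that $\Omega=\{u>0\}$ is open and the complementarity pair $(\mu,u)$ of (\ref{umu})--(\ref{umuomega}) behaves classically.

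I would first argue that $\nu$ is singular with respect to $\mathrm{vol}^n$. The point is that, as observed in the paragraph preceding the theorem, when $G^\lambda$ is continuous the obstacle-problem characterization of $\mathrm{Bal}(\sigma,\lambda)$ can be realigned with the reduction operation of classical potential theory (Gardiner--Sj\"odin \cite{Gardiner-Sjodin-2009}, Theorem~7). The mass of $\nu$ therefore sits on the part of $\partial\Omega$ which, for the reduction viewpoint, behaves like harmonic (swept) measure; by the theorems of Oksendal, Bourgain, and Hansen--Hueber cited in the paper, harmonic measure assigns full mass to a set of Lebesgue measure zero. Hence $\nu\perp \mathrm{vol}^n$.

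Next I would invoke the bounds
$$
0\leq \nu\leq \lambda,
$$
again from \cite{Gardiner-Sjodin-2009}, Theorem~7. The upper bound is immediate from (\ref{boundsgeneral}); the lower bound is the delicate one (it expresses that $d*du\geq 0$ on $\{u=0\}$, cf.\ Brezis--Ponce). Granted both, absolute continuity of $\lambda$ with respect to $\mathrm{vol}^n$ forces $\nu$ to be simultaneously dominated by an absolutely continuous measure and singular with respect to $\mathrm{vol}^n$, so $\nu=0$. This yields the structure formula (\ref{purestructure}).

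Finally, for the particular case, I would apply (\ref{balgeneral}) in the form
$$
\mathrm{Bal}(\sigma,0)=\mathrm{Bal}(\sigma_+,\sigma_-)-\sigma_-,
$$
and invoke the just-proved structure formula with $\sigma\rightsquigarrow\sigma_+$ and $\lambda\rightsquigarrow\sigma_-$, whose hypotheses are exactly the continuity of $G^{\sigma_-}$ and the absolute continuity of $\sigma_-$. This gives
$$
\mu=-\mathrm{Bal}(\sigma,0)=\sigma_-|_{M\setminus\Omega}-\sigma_+|_{M\setminus\Omega}.
$$
Since the Jordan decomposition makes $\sigma_+$ and $\sigma_-$ mutually singular and $\sigma_-$ is absolutely continuous, $\sigma_+$ is carried by $\{\rho_-=0\}$ where $\rho_-$ is the density of $\sigma_-$. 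The bounds (\ref{bounds}) imply $0\leq\mu\leq\sigma_-$, so $\mathrm{supp}\,\mu$ lies (up to a $\mathrm{vol}^n$-null set) inside $\{\rho_->0\}$, whence $\sigma_+|_{\mathrm{supp}\,\mu}=0$ and (\ref{structure}) follows. The main obstacle in the whole argument is the lower bound $\nu\geq 0$, but this is already settled in \cite{Gardiner-Sjodin-2009} and transfers to the Riemannian setting without essential change.
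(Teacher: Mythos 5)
Your proposal is correct and follows essentially the same route as the paper: the decomposition (\ref{generalstructure}), singularity of $\nu$ with respect to ${\rm vol}^n$ via the reduction/harmonic-measure connection of \cite{Gardiner-Sjodin-2009}, the bounds (\ref{nulambda}) with the delicate lower bound imported from there, then $\nu=0$ by absolute continuity of $\lambda$, and the particular case by the substitution $(\sigma,\lambda)\rightsquigarrow(\sigma_+,\sigma_-)$ through (\ref{covariance})--(\ref{balgeneral}). One small polish for your last step: rather than arguing via $\{\rho_->0\}$ (a set defined only up to ${\rm vol}^n$-null sets, which does not directly control the closed set ${\rm supp}\,\mu$ or possible singular parts of $\sigma_+$), observe that $\sigma_+|_{{\rm supp}\,\mu}=\sigma_-|_{{\rm supp}\,\mu}-\mu\leq\sigma_-$ while $\sigma_+\perp\sigma_-$ by the Jordan decomposition, so $\sigma_+|_{{\rm supp}\,\mu}=0$ and (\ref{structure}) follows.
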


In Section~\ref{sec:onedimension} we will give an example in one dimension showing that $\nu$ need not vanish
if (in the setting of (\ref{muBal0})) ${\sigma_-}$ has point masses, even if $G^{\sigma_-}$ is continuous. So at least the assumption that $\sigma_-$
is absolutely continuous is really necessary.

\begin{remark}
A  slightly more general version the structure formula is
\begin{equation}\label{structure1}
{\rm Bal\,}(\sigma, \lambda)=\lambda|_D + \sigma|_{M\setminus D},
\end{equation}
where $D$ is any set in the interval
$\omega\subset D\subset \Omega$.
In fact, S.~Gardiner, T.~Sj\"odin \cite{Sjodin-2007, Gardiner-Sjodin-2009}, as well as many other authors, work with the choice $D=\omega$. \blsquarehere
\end{remark}

\begin{remark}
Here we mention two ways of replacing the two assumptions on $\sigma$ in the second part of Theorem~\ref{thm:structure} by one single assumption.

First, in terms of the function $u$,   (\ref{structure}) says that $d*d u=0$ (as a measure) on ${{\rm supp\,}\mu}$, so to prove (\ref{structure}) it is by
(\ref{umuomega}) enough to prove that $d*d u=0$ on the coincidence set $M\setminus \omega$. 
If $\sigma$ is absolutely continuous with respect to ${\rm vol}^n$ with a density 
function in $L^p(M)$ for some $p>1$, then $u\in W^{2,p}(M)$ by the regularity theory for variational inequalities \cite{Kinderlehrer-Stampacchia-1980, Friedman-1982, Petrosyan-Shahgholian-Uraltseva-2012} (this regularity can also be derived from (\ref{bounds})).  Then the second derivatives of $u$ are in $L^p(M)$, and they vanish a.e. on 
the set where $u=0$ (see again \cite{Kinderlehrer-Stampacchia-1980}, Appendix A to Ch.~II). So (\ref{structure}) follows if $\sigma\in L^p(M)$, $p>1$
(cf. also Theorem~4.10 in \cite{Hedenmalm-Makarov-2013}). 

Alternatively, to make an assumption only on $\sigma_-$,  we may  assume that
$\sigma_-$ is absolutely continuous with respect to ${\rm vol}^n$ with a density function which belongs to $L^p(M)$ for some $p>n/2$.
Then $G^{\sigma_-}$ is in the Sobolev space $W^{2,p}(M)$, hence is continuous by the Sobolev  embedding theorem. This makes
both assumptions in Theorem~\ref{thm:structure} fulfilled. \blsquarehere
\end{remark}

\begin{remark}\label{rem:properties}
For the sake of completeness we mention a couple of further properties of partial balayage. First, the balayage operation
can always be broken up in smaller steps, in the precise sense that if $\lambda_1\leq \lambda_2 +\sigma_2$ then
$$
{\rm Bal}({\rm Bal}(\sigma_1,\lambda_2)+\sigma_2, \lambda_1)={\rm Bal}(\sigma_1+\sigma_2,\lambda_1).
$$
Combining this with the estimates (\ref{boundsgeneral}) easily gives the monotonicity 
$$
\sigma_1\leq \sigma_2 \Longrightarrow {\rm Bal}(\sigma_1,\lambda)\leq {\rm Bal}(\sigma_2,\lambda). 
$$
See  \cite{Gustafsson-Sakai-1994} for proofs and some more properties. \blsquarehere
\end{remark}


\section{Weighted equilibrium distributions}
\label{sec:weighted-eq-confs}

In \cite{Saff-Totik-1997} the theory of weighted equilibrium measures, \ie measures minimizing a certain energy functional under
the influence of some external field, is developed in the setting of two-dimensional logarithmic potential theory. The energy
functional used is
\begin{equation}
  \label{eq:equilib1}
  I_Q(\mu) = \int_{\mathbb{C}} U^\mu \, d \mu + 2 \int_{\mathbb{C}} Q \, d \mu,
\end{equation}
where $\mu$ is a Borel probability measure on $\C$, $U^\mu$ is the (Newtonian) potential of $\mu$, and $Q$ is a function on $\C$,
thought of as an external field. One interpretation of \eqref{eq:equilib1} is that the first term is the self-energy of the
measure $\mu$, and the second term the interaction energy of $\mu$ with the field $Q$. Under suitable assumptions on $Q$ it is
known that there exists a unique probability measure $\mu_Q$, the \emph{weighted equilibrium measure}, that minimizes $I_Q$ over
the set of all Borel probability measures.

In this section we utilize the a complementarity relationship developed in \cite{Roos-2015} between weighted equilibrium measures
and measures obtained from partial balayage operations to define a similar notion of weighted equilibrium $n$-forms on compact
Riemannian manifolds.

Let $M$ be an $n$-dimensional compact Riemannian manifold, and let $Q$ be a potential on $M$, assumed bounded from below.
We are going to treat the potential $Q$ as an external field that is applied on the manifold, and as such there will in a natural
way arise an $n$-form with similar properties as the weighted equilibrium measure in the complex setting. Let $\mathsf{t} > 0$ be
arbitrary but fixed---this will become a parameter that in essence tunes the total mass of the resulting $n$-form---and define
\begin{align}
  \label{eq:wec-sigma-t-def}
  \sigma_{\mathsf{t}} := - d * d Q - \mathsf{t} \, \mathrm{vol}^n.
\end{align}
From Example~\ref{ex:psiuppersemicontinuous} we know that ${\rm Bal}(\sigma_{\mathsf{t}}, 0)$ exists as $Q$ is assumed to be
bounded from below and since we have
\begin{align}
  Q = G^{\sigma_{\mathsf{t}} + \mathsf{t} \, \mathrm{vol}^n} + c = G^{\sigma_{\mathsf{t}}} + c
\end{align}
for some constant $c$. We now simply define the \emph{weighted
  equilibrium $n$-form} $\mu_{Q,\mathsf{t}}$ to be
\begin{equation}
  \label{eq:equilib3}
  \mu_{Q,\mathsf{t}} := -{\rm Bal}(\sigma_{\mathsf{t}}, 0).
\end{equation}
As a justification for this definition, we saw in the end of Section~\ref{sec:variational} that in the context of energy
minimization the calculation of ${\rm Bal(\sigma_{\mathsf{t}}, 0)}$ essentially boils down to finding the $n$-form $\nu$ that
minimizes the difference $\mathcal{E}(\nu) - 2\mathcal{E}(\nu, \sigma_{\mathsf{t}})$ over the set of $\nu$ satisfying
$\nu \leq 0$ and $\int_M \nu = \int_M \sigma_{\mathsf{t}} = -\mathsf{t}$ ($< 0$). This is equivalent to minimizing
$\mathcal{E}(-\mu) - 2\mathcal{E}(-\mu, \sigma_{\mathsf{t}})$ over the set of $n$-forms $\mu$ satisfying $\mu \geq 0$ and
$\int_M \mu = - \int_M \sigma_{\mathsf{t}} = \mathsf{t}$ ($> 0$), and an expansion of the energy difference shows that
\begin{align*}
  \mathcal{E}(-\mu) - 2\mathcal{E}(-\mu, \sigma_{\mathsf{t}}) &= \int_M G^{-\mu} \wedge (-\mu) - 2 \int_M G^{\sigma_{\mathsf{t}}}
                                                                \wedge (-\mu) \\
                                                              &= \int_M G^{\mu} \wedge \mu + 2 \int_M G^{\sigma_{\mathsf{t}}} \wedge \mu.
\end{align*}
Since $Q = G^{\sigma_{\mathsf{t}}} + c$ this is, up to a constant that does not matter in the minimization problem anyway, a
clear analogue to \eqref{eq:equilib1}. Moreover, definition \eqref{eq:equilib3} implies the following result, highly related to an
important characterization result for weighted equilibrium measures in the logarithmic setting \cite[Theorem
I.3.3]{Saff-Totik-1997}.
\begin{proposition}
  Let $Q$ be a potential on $M$, bounded from below, let $\mathsf{t} > 0$ be arbitrary and let $\mu_{Q,\mathsf{t}}$ be the
  resulting weighted equilibrium $n$-form as in \eqref{eq:equilib3}. Then there exists a constant $c_{\mathrm{Robin}}$, the \emph{modified Robin
    constant}, such that
  \begin{align}
    Q + G^{\mu_{Q,\mathsf{t}}} &\geq c_{\mathrm{Robin}} \text{ everywhere}, \label{eq:equilib-Robin-ineq} \\
    Q + G^{\mu_{Q,\mathsf{t}}} &= c_{\mathrm{Robin}} \text{ on } \supp \mu_{Q,\mathsf{t}}. \label{eq:equilib-Robin-eq}
  \end{align}
\end{proposition}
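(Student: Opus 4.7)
The plan is to identify the obstacle-problem gap function $u = v - \psi$ underlying $\mu_{Q,\mathsf{t}} = -{\rm Bal}(\sigma_{\mathsf{t}}, 0)$ with the sum $Q + G^{\mu_{Q,\mathsf{t}}}$ up to an additive constant, and then read off both assertions directly from the complementarity system of Theorem~\ref{thm:complementarity}.

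First I would unwind the definitions of $\psi$ and $v$. Equation (\ref{psirelaxed}) applied to $\sigma = \sigma_{\mathsf{t}} = -d{*}dQ - \mathsf{t}\,\mathrm{vol}^n$ yields $d{*}d\psi = \sigma_{\mathsf{t}} + \mathsf{t}\,\mathrm{vol}^n = -d{*}dQ$, and since harmonic functions on the closed manifold $M$ are constant, this forces $\psi = c_{1} - Q$ for some $c_{1} \in \mathbb{R}$. Likewise, (\ref{partialbalayagev}) gives $d{*}dv = \mathsf{t}\,\mathrm{vol}^n - \mu_{Q,\mathsf{t}}$; since partial balayage preserves total mass, $\mathsf{m}(\mu_{Q,\mathsf{t}}) = \mathsf{t}$, and (\ref{ddG}) shows that $G^{\mu_{Q,\mathsf{t}}}$ satisfies the same equation. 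Hence $v = G^{\mu_{Q,\mathsf{t}}} + C$ for some $C \in \mathbb{R}$, and subtracting,
\[
u \;=\; v - \psi \;=\; Q + G^{\mu_{Q,\mathsf{t}}} + (C - c_{1}).
\]

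Setting $c_{\mathrm{Robin}} := c_{1} - C$, the bound $u \geq 0$ in (\ref{complementaritysystem})---which, by Theorem~\ref{thm:consistence}, still governs the extended Definition~\ref{def:partialbalayage2}---is exactly \eqref{eq:equilib-Robin-ineq}. The equality \eqref{eq:equilib-Robin-eq} will come from the sharper form (\ref{umuomega}) of the complementarity, asserting $\omega \cap \supp \mu_{Q,\mathsf{t}} = \emptyset$; hence $\supp \mu_{Q,\mathsf{t}}$ lies in the coincidence set where $v = \psi$, i.e.\ where $u = 0$.

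The algebraic identifications above are entirely routine once one notices that $Q$ itself is $G^{\sigma_{\mathsf{t}}}$ plus a constant. The only delicate step, and the one I expect to require the most care, is upgrading ``$u = 0$ on $\supp \mu_{Q,\mathsf{t}}$'' to a genuine pointwise statement of \eqref{eq:equilib-Robin-eq} rather than a mere $\mu_{Q,\mathsf{t}}$-almost-everywhere one; here the hypothesis that $Q$ is bounded from below is essential, both for the existence of the balayage via Example~\ref{ex:psiuppersemicontinuous} and for the availability of a canonical lower semicontinuous representative of $Q + G^{\mu_{Q,\mathsf{t}}}$ on which the inequality and equality can be stated unambiguously.
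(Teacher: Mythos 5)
Your proposal is correct and takes essentially the same approach as the paper: the paper's proof likewise reduces everything to the identity $u = Q + G^{\mu_{Q,\mathsf{t}}} - c_{\mathrm{Robin}}$ (obtained there in one line from \eqref{uvG} and linearity of the Green's potential, where you instead compute $\psi = c_1 - Q$ and $v = G^{\mu_{Q,\mathsf{t}}} + C$ separately from the Hodge decompositions) and then reads \eqref{eq:equilib-Robin-ineq} off $u \geq 0$ and \eqref{eq:equilib-Robin-eq} off the complementarity \eqref{umuomega} together with \eqref{noncoincidence}. Your closing caveat about canonical representatives addresses a point the paper leaves implicit, but it does not alter the argument.
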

\begin{proof}
  The proposed result essentially follows immediately from \eqref{umuomega} and \eqref{noncoincidence} as we here have 
  \begin{equation}
    \label{eq:equilib4}
    u = G^{\sigma_{\mathsf{t}} - {\rm Bal}(\sigma_{\mathsf{t}}, 0)} + c' = G^{\sigma_{\mathsf{t}}} + G^{\mu_{Q,\mathsf{t}}} + c'
    = Q + G^{\mu_{Q,\mathsf{t}}} - c_{\mathrm{Robin}}
  \end{equation}
  for some constants $c, c'$, where $c_{\mathrm{Robin}} := c - c'$. Inequality \eqref{eq:equilib-Robin-ineq} follows from the
  nonnegativity of $u$, and $\{u > 0\} \cap \supp \mu_Q = \emptyset$ yields \eqref{eq:equilib-Robin-eq}.
\end{proof}


\section{Quadrature domains for subharmonic functions}\label{sec:quadrature}

There is an equivalent description of partial balayage in terms of quadrature formulas for subharmonic functions.
Construction of  quadrature domains for subharmonic functions was actually one of the main incentives for developing a theory of partial balayage, see 
\cite{Sakai-1979, Sakai-1982, Sakai-1983, Gustafsson-1990b, Shapiro-1992, Gustafsson-Sakai-1994}.
The following theorem is a simple result in this respect,  adapted to the formalism of the present paper.

\begin{theorem}\label{thm:quadrature}
Let $\sigma$, $\nu$  be charge distributions in $M$, and assume that $G^{\sigma_-}$ is continuous and that (\ref{sigmastrictlynegative}) holds for $\sigma$.
Then  $\nu={\rm Bal}(\sigma,0)$  if and only if $-\sigma_-\leq \nu\leq 0$  and
\begin{equation}\label{quadrature}
\int_M \varphi \wedge ( \nu-\sigma)\geq 0 
\end{equation}
for every upper semicontinuous  potential $\varphi$ in $M$ which satisfies $d*d\varphi \geq 0$ in $M\setminus {\rm supp\,}\nu$.
\end{theorem}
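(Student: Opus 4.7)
The plan is to reduce both implications to the complementarity system of Theorem~\ref{thm:complementarity}, working throughout with a potential $u$ satisfying $d*du=\nu-\sigma$ and using the integration-by-parts identity (\ref{varphi12}) as the main computational tool.

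For the forward direction, the bounds $-\sigma_-\le\nu\le 0$ are immediate from Lemma~\ref{lem:bounds}. Write $\nu-\sigma=d*du$ with $u\ge 0$ the potential of the complementarity system. Since $G^{\sigma_-}$ is continuous, Remark~\ref{rem:Gcontinuous} makes $u$ lower semicontinuous, so the non-coincidence set simplifies to $\omega=\{u>0\}$ via (\ref{noncoincidence}), and (\ref{umuomega}) puts $\operatorname{supp}\nu\subset M\setminus\omega$, on which $u$ vanishes. Since $u\ge 0$ is bounded below and any USC $\varphi$ on the compact manifold $M$ is bounded above, (\ref{varphi12}) applies and gives
\[
  \int_M \varphi\wedge(\nu-\sigma) \;=\; \int_M \varphi\wedge d*du \;=\; \int_M u\wedge d*d\varphi.
\]
As $u$ vanishes off $\omega$, the right-hand integral reduces to $\int_\omega u\wedge d*d\varphi$; the inclusion $\omega\subset M\setminus\operatorname{supp}\nu$ combined with the hypotheses $u\ge 0$ and $d*d\varphi\ge 0$ on $M\setminus\operatorname{supp}\nu$ then makes it nonnegative.

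For the converse, I first test the inequality against $\varphi\equiv\pm 1$ (admissible because constants have $d*d\varphi=0$) to deduce $\int_M\nu=\int_M\sigma$; hence a potential $u$ with $d*du=\nu-\sigma$ exists, unique up to an additive constant. The plan is to show that, for a suitable choice of this constant, $u$ satisfies the three conditions of (\ref{complementaritysystem}), after which Theorem~\ref{thm:complementarity} forces $\nu=d*du+\sigma=\mathrm{Bal}(\sigma,0)$. The inequality $d*du+\sigma=\nu\le 0$ is given. For $u\ge 0$, note that $(\sigma-\nu)_-\le\sigma_-$ has a bounded Green's potential (since $G^{\sigma_-}$ is continuous on compact $M$), so $G^{\sigma-\nu}$ is bounded below and the constant in $u=G^{\sigma-\nu}+C$ can be chosen to make $u$ nonnegative. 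For the complementarity identity $\int_M u\wedge\nu=0$: the inequality $\le 0$ is free from $u\ge 0$, $\nu\le 0$. For the reverse inequality, let $v^*$ be the continuous (Remark~\ref{rem:Gcontinuous}) obstacle-problem solution for $\mathrm{Bal}(\sigma,0)$, and compare with $v=u+\psi$, which also satisfies $v\ge\psi$, $\Delta v\le\mathsf{t}$; minimality gives $v^*\le v$, and analysis of the coincidence sets yields $\operatorname{supp}\nu^*\subset\operatorname{supp}\nu$. This inclusion makes $d*dv^*=\nu^*+\mathsf{t}\,\mathrm{vol}^n$ equal to $\mathsf{t}\,\mathrm{vol}^n\ge 0$ on $M\setminus\operatorname{supp}\nu$, so $v^*$ is an admissible test function; feeding it into the hypothesis, and coupling with the forward inequality applied to $\nu^*$ via (\ref{varphi12}), delivers the remaining inequality and hence $\nu=\nu^*$.

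The main obstacle is the admissibility engineering in the converse, and in particular the support inclusion $\operatorname{supp}\nu^*\subset\operatorname{supp}\nu$: the hypothesis is quite permissive about $\varphi$ only when the sign restriction on $d*d\varphi$ is verified on $M\setminus\operatorname{supp}\nu$, and squeezing a useful test function through this restriction (while preserving upper semicontinuity on compact $M$, where the compensating mean-value term $\mathsf{m}(\cdot)\,\mathrm{vol}^n$ always intrudes) is the subtlest part of the argument.
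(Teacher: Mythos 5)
Your forward implication is correct and is essentially the paper's own argument: bounds from Lemma~\ref{lem:bounds}, lower semicontinuity of $u$ from Remark~\ref{rem:Gcontinuous}, the vanishing of $u$ off $\omega\subset M\setminus\mathrm{supp}\,\nu$ via (\ref{umu})--(\ref{umuomega}), and the symmetric integration by parts (\ref{varphi12}) justified by $u$ bounded below and $\varphi$ bounded above.

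The converse, however, has a genuine gap, and it sits exactly where you flagged it. The additive constant in $u$ is a single degree of freedom, while the complementarity system (\ref{complementaritysystem}) demands $u\geq 0$ \emph{and} $\int_M u\wedge\nu=0$ for the \emph{same} constant. You spend the constant on forcing $u\geq 0$, and must then prove the integral identity; but your mechanism for the missing inequality fails. The minimality comparison $v^*\leq v$ does \emph{not} yield $\mathrm{supp}\,\nu^*\subset\mathrm{supp}\,\nu$: any $\nu$ with $-\sigma_-\leq\nu\leq 0$ and $\int_M\nu=\int_M\sigma$ produces, after your constant shift, a competitor $v=u+\psi$ with $v\geq\psi$ and $\Delta v\leq\mathsf{t}$, hence $v^*\leq v$ --- including a $\nu$ that merely rearranges the correct amount of mass into the ``wrong'' part of the hole $\sigma_-$. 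For instance, in the setting of (\ref{balNvol}) on the sphere, replace $K$ by any other region $K'\subset\mathrm{supp}\,\sigma_-$ of equal $\alpha\,\mathrm{vol}^n$-mass and put $\nu=-\alpha\,\mathrm{vol}^n|_{K'}$: then $v^*\leq v$ holds but $\mathrm{supp}\,\nu^*\not\subset\mathrm{supp}\,\nu$. The inclusion can only come from the hypothesis (\ref{quadrature}), yet your sole use of (\ref{quadrature}) beyond $\varphi=\pm 1$ (``feeding $v^*$ into the hypothesis'') presupposes the inclusion in order to make $v^*$ admissible --- the argument is circular.

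The paper resolves this by reversing the order and by a different family of test functions: normalize $u$ \emph{first} by $\int_M u\wedge\nu=0$ (possible precisely because $\int_M\nu=\int_M\sigma<0$), and then, for an \emph{arbitrary} positive charge distribution $\tau\geq 0$, choose $c>0$ with $\int_M(\tau+c\nu)=0$ and solve $d*d\varphi=\tau+c\nu$; the lower bound $\nu\geq-\sigma_-$ together with continuity of $G^{\sigma_-}$ lets one take $\varphi$ upper semicontinuous, and $\varphi$ is admissible in (\ref{quadrature}) since $d*d\varphi=\tau\geq 0$ off $\mathrm{supp}\,\nu$. Then (\ref{quadrature}), integration by parts, and the normalization give $\int_M u\wedge\tau\geq 0$ for every $\tau\geq 0$, hence $u\geq 0$, and all three conditions of (\ref{complementaritysystem}) hold simultaneously, forcing $\nu=\mathrm{Bal}(\sigma,0)$. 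This construction --- encoding the unknown measure $\nu$ itself into the Laplacian of the test function so that admissibility off $\mathrm{supp}\,\nu$ is automatic --- is the idea missing from your converse.
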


\begin{remark}
The test functions $\varphi$ are assumed to be upper semicontinuous, hence to be bounded from above. We then agree that the canonical representative
(\ref{canonical}) shall be used. It follows that the integral in (\ref{quadrature}) has a definite meaning, with the value possibly being $+\infty$
($\sigma_+$ may have point masses and $\varphi$ may attain the value $-\infty$). 
In addition, it follows that $\varphi$ is subharmonic as a function in  $M\setminus {\rm supp\,}\nu$. 
The uses of partial integration in the proof below is justified by  the potential $u$ being bounded from below (in addition to $\varphi$ being bounded from above), 
as explained in the discussion after (\ref{varphi12}). \blsquarehere
\end{remark}

\begin{proof} 
Assume first that $\nu={\rm Bal}(\sigma,0)$. Using (\ref{bounds}) we then have $-\sigma_-\leq \nu=\sigma+ d*du\leq 0$, 
where $u\geq 0$ is lower semicontinuous (by Remark~\ref{rem:Gcontinuous})
and vanishes on ${\rm supp\,}\nu$ (see (\ref{umu}),  (\ref{umuomega})). Hence
\begin{align*}
  \int_M\varphi\wedge (\nu-\sigma) &= \int_M \varphi\wedge d*du = \int_M u\wedge d*d\varphi \\ 
  &=\int_{M\setminus {\rm supp\,}\nu} u\wedge d*d\varphi \geq 0
\end{align*}
for every $\varphi$ as in the statement of the theorem.

In the other direction, assuming $-\sigma_-\leq \nu\leq 0$  and that (\ref{quadrature}) holds we may first choose $\varphi =\pm 1$, by which (\ref{quadrature}) gives that
$\int_M \nu=\int_M \sigma$. Therefore there exists a potential $u$ such that $\nu-\sigma=d*du$. This $u$ is determined only up to
an additive constant, and since $\int_M \nu=\int_M \sigma<0$  we can adapt this constant  so  that  $\int_M u\wedge \nu=0$.
Moreover, $u$ can be chosen to be lower semicontinuous since $G^{\sigma_-}$ is assumed to be continuous.

Next, let $\tau\geq 0$ be a positive charge distribution in $M$. Then $\int_M(\tau +c\nu)=0$ for a suitable $c >0$, 
hence $\tau+c\nu=d*d\varphi$ for some potential $\varphi$. Because of the assumed lower bound on $\nu$ and the continuity of $G^{\sigma_-}$,
$\varphi$ can be chosen to be upper semicontinuous.
As $d*d\varphi=\tau\geq 0$ in $M\setminus {\rm supp\,}\nu$,  $\varphi$ is an allowed test function in (\ref{quadrature})  and it follows that
\begin{align*}
  0\leq \int_M\varphi\wedge(\nu-\sigma) &=\int_M \varphi\wedge d*du=\int_M u\wedge d*d\varphi \\
                                        &=\int_M u\wedge (\tau+c\nu)=\int_M u\wedge \tau.
\end{align*}
Since $\tau\geq 0$ was arbitrary it follows that $u\geq 0$ in $M$. Together with $\nu\leq 0$ and$\int_M u\wedge \nu=0$ 
this shows that  $\nu=\sigma+d*du={\rm Bal\,}(\sigma,0)$. 

\end{proof}

A typical application is obtained by choosing $\sigma=\mu -{\rm vol}^n$, where $\mu\geq 0$ is sufficiently concentrated to a small set,
for example is singular with respect to ${\rm vol}^n$, or satisfies $\mu \geq {\rm vol}^n$ on some open set and vanishes
outside that set. In these cases the structure formula (Theorem~\ref{thm:structure}) shows that
\begin{equation}\label{balsubharm}
{\rm Bal}(\mu,{\rm vol}^n)= \chi_\Omega\, {\rm vol}^n
\end{equation}
for some saturated open set  $\Omega\subset M$. And by Theorem~\ref{quadrature} this is equivalent to $\Omega$ being a \emph{subharmonic quadrature domain}
(or open set) for $\mu$, in the sense that $\mu=0$ outside $\Omega$ and
\begin{equation}\label{quadratureformula}
\int_\Omega \varphi \,{\rm vol}^n \geq \int \varphi \wedge\mu
\end{equation}
for all potentials $\varphi$ which are subharmonic in $\Omega$. 
See  \cite{Sakai-1982, Shapiro-1992, Gustafsson-Shapiro-2005} for further information.

Choosing $\mu$ of the form $\mu=t \delta_a +\chi_D \, {\rm vol}^n$ ($t>0$) one gets the weak formulation of a standard version of Laplacian growth, to be
discussed in the next section.


\section{Laplacian growth}\label{sec:Laplacian growth}

Laplacian growth refers to domain evolutions driven by gradients of harmonic domain functions.  The standard case, which may also
be named ``motion by harmonic measure'', is that the domain function is the Green's function of the domain with pole at a fixed
point and zero Dirichlet boundary data. Detailed information and many references for Laplacian growth can be found in
\cite{Gustafsson-Teodorescu-Vasiliev-2014}.  The original connection between Laplacian growth, in the context of a fluid
dynamical interpretation of it in terms of Hele-Shaw flow, and quadrature domains (or more exactly moment preservation) was made
by S.~Richardson \cite{Richardson-1972}.  Laplacian growth on manifolds has previously been discussed in
\cite{Varchenko-Etingof-1992, Entov-Etingof-1997, Hedenmalm-Shimorin-2002, Hedenmalm-Olofsson-2005, Crowdy-2005}, for example.

To make everything precise in the above standard case, let for any subdomain $D\subset M$ with nontrivial complement (say
with ${\rm vol}^n(M\setminus D)>0$), and any $a\in D$, $g_{D}(\cdot,a)$ be the Dirichlet Green's function of $D$, determined by
\begin{gather*}
  \left\{
    \begin{array}{l l}
      -d * d g_D(\cdot, a) = \delta_a & \text{ in } D, \\[.1cm]
      \phantom{-d*d} g_D(\cdot, a) = 0 & \text{ on } \partial D.
    \end{array}
  \right.
\end{gather*}
Then the dynamical law for the corresponding  time evolution $t\mapsto D(t)$, {\it Laplacian growth}, can be expressed as
\begin{equation}\label{LGlaw}
\frac{d}{dt}\int_{D(t)}\varphi \,{\rm vol}^n
=-\int_{\partial D(t)} \varphi * dg_{D(t)}(\cdot, a),
\end{equation}
which is to hold for every smooth test function $\varphi$ in $M$. The law says that the velocity of the boundary
$\partial D(t)$ in the outward normal direction equals minus the outward normal derivative of the Green's function.
Otherwise said, the velocity vector ${\bf v}(\cdot,t)$ by which the boundary moves is  
\begin{equation}\label{vgradg}
{\bf v}(x,t)= -\nabla g_{D(t)}(x,a),
\end{equation}
for $x\in \partial D(t)$. 

The formula (\ref{vgradg}) can be alternatively expressed, in the language of differential forms,  as
\begin{equation}\label{ivdg}
i({\bf v}(x,t)){\rm vol}^n=-*dg_{D(t)}(x, a),
\end{equation}
where $i(\cdot)$ denotes interior derivation (see \cite{Frankel-2012}). Recall also that   (minus) the $(n-1)$-form $*dg_D(\cdot,a)$ represents the harmonic
measure on $\partial D$ with respect to $a$, or the result of classical balayage of $\delta_a$ to $\partial D$.
It should be emphasised that the test function $\varphi$ in (\ref{LGlaw}) is to be independent of $t$. The relationship between (\ref{LGlaw}) and (\ref{ivdg}) then becomes immediate
from H.~Cartan's formula for the Lie derivative $\mathcal{L}_{\bf v}$ acting on forms, combined with Stokes' theorem: 
\begin{align*}
  \frac{d}{dt}\int_{D(t)} &\varphi \,{\rm vol}^n = \int_{D(t)} \mathcal{L}_{{\bf v}} (\varphi \,{\rm vol}^n) \\
  =&\int_{D(t)} (d ( i({{\bf v}}) (\varphi \,{\rm vol}^n)+ i({{\bf v}}) d(\varphi \,{\rm vol}^n)) =\int_{\partial D(t)} \varphi\, i({{\bf v}})\, {\rm vol}^n.
\end{align*}

For test functions $\varphi$ which are subharmonic in $D(t)$ one has 
$$
-\int_{\partial D(t)} \varphi * dg_{D(t)}(\cdot, a)\geq \varphi (a).
$$
Hence on using only such functions and integrating  (\ref{LGlaw}) from time zero to 
some positive time $t$ one gets
$$
\int_{D(t)} \varphi\, {\rm vol}^n -\int_{D(0)} \varphi \,{\rm vol}^n\geq t \varphi(a).
$$
This inequality, holding for test functions $\varphi$ which are subharmonic in $D(t)$, represents
a weak formulation of the Laplacian growth law. It says that $D(t)$ is a subharmonic quadrature domain
for the measure $t\delta_a + \chi_{D(0)}{\rm vol}^n$,  and by Theorem~\ref{thm:quadrature} it is equivalent
to the balayage statement
$$
{\rm Bal}(t\delta_a+\chi_{D(0)}\, {\rm vol}^n, {\rm vol}^n)= \chi_{D(t)} \, {\rm vol}^n.
$$

In the above weak formulations one may  start with an arbitrary initial open set $D(0)\subset M$ and allow any
$0<t<{\rm vol}^n(M\setminus D(0))$. In addition, the point $a$ need not be in $D(0)$, in fact
$D(0)$ may even be the empty set. This gives rise to what will be called harmonic balls in the next section.


\section{Harmonic and geodesic balls}\label{sec:balls}

There are two kinds of balls to consider,  geodesic balls and  harmonic balls. The geodesic balls are 
simply the ordinary balls defined in terms of the Riemannian distance function, while the harmonic balls are defined 
by partial balayage, or by mean-value properties for harmonic functions. In the Euclidean case these two kinds of balls coincide. 
Here we shall prove that, in two dimensions, geodesic and harmonic balls agree if and only if the Gaussian curvature of the
manifold is constant. We begin with the definitions.

\begin{definition}\label{def:balls}
Let $M$ be a Riemannian manifold.
The {\it geodesic ball} with centre $a\in M$ and radius $r>0$ is
$$
B^{\rm geod} (a,r)=\{x\in M: {\rm dist}(x,a)<r\},
$$
where ${\rm dist}(x,a)$ denotes geodesic distance between $x$ and $a$.

The {\it harmonic ball} with centre $a\in M$ and volume $t>0$ is the open saturated
set $B=B^{\rm harm} (a,t)$ defined by
\begin{equation}\label{balB}
{\rm Bal\,}(t\delta_a, {\rm vol}^n)=\chi_{B} \, {\rm vol}^n.
\end{equation}
 \end{definition}

In the above definition, the balayage statement (\ref{balB}) can be replaced by the quadrature property (see Section~\ref{sec:quadrature}) that 
$$
\int_{B}\,h \,{\rm vol}^n \geq t h(a)
$$ 
holds for all integrable subharmonic functions $h$ in $B$. 
At least for small values of $t$ (and we shall consider only such values),
it is known that the \emph{a priori} weaker mean-value property that
\begin{equation}\label{harmmean}
\int_{B}\,h \,{\rm vol}^n = t h(a)
\end{equation} 
holds for all integrable harmonic functions $h$ in $B$ is enough to ensure (\ref{balB}).

In general, the study of mean-value properties such as (\ref{harmmean}) has a long history,
which can be traced back even to I.~Newton. In fact, Newton proved that the exterior gravitational field of a homogeneous ball
is the same as that of a point mass in the centre, and that statement is equivalent to (\ref{harmmean}).
Some general discussion of mean-value properties, as well as further references, can be found in \cite{Shapiro-1992}. 
The specific notion of harmonic ball was introduced in \cite{Shahgholian-Sjodin-2013} in
the Euclidean case, and studies for curved manifolds, in the context of the corresponding Hele-Shaw flow
problem (Laplacian growth), can be found in \cite{Varchenko-Etingof-1992, Hedenmalm-Shimorin-2002}, for example.

In the (locally) Euclidean case, \ie  with $ds^2=dx_1^2+\dots +dx_n^2$, we clearly
have $B^{\rm geod} (a,r)=B^{\rm harm} (a,t)$ with $r$ and $t$ related by
\begin{equation}\label{trGamma}
t=r^n\,\frac{\pi^{n/2}}{\Gamma (n/2 + 1)}.
\end{equation}
Here the last factor is the volume of the unit ball in $n$ dimensions.
It is easy to see, however, that geodesic and harmonic balls cannot always be the same in the case curved manifolds.
The main result in the present section is the following.

\begin{theorem}\label{thm:equivalentballs}
Let $M$ be a Riemannian manifold of dimension two. Then (small) geodesic
and harmonic balls with the same centre coincide, as families, if and only if the Gaussian curvature $\kappa$
of $M$ is constant.
When this holds, then the relationship between the balls is more precisely 
\begin{equation}\label{geodharm}
B^{\rm geod} (a,r)=B^{\rm harm} (a,t)
\end{equation}
with $r,t>0$ related by 
\begin{equation}\label{trGammakappa}
t=
\begin{cases}
\displaystyle \phantom{-} \frac{\pi}{\kappa} \sin^2(\sqrt{\kappa}r) \quad &\text{if } \kappa >0,\\[.2cm]
\phantom{-} \pi r^2 \quad &\text{if } \kappa =0,\\[.2cm]
\displaystyle -\frac{\pi}{\kappa} \sinh^2(\sqrt{-\kappa}r) \quad &\text{if } \kappa <0.\\
\end{cases}
\end{equation}
\end{theorem}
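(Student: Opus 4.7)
My plan is to treat the two implications separately. For the forward direction, assume $\kappa$ is constant. Then $M$ is locally isometric to a space form, so in geodesic polar coordinates around any centre $a$ the metric takes the warped-product form $ds^{2}=dr^{2}+f(r)^{2}\,d\theta^{2}$ with $f$ depending only on $r$---explicitly $\sin(\sqrt{\kappa}\,r)/\sqrt{\kappa}$, $r$, or $\sinh(\sqrt{-\kappa}\,r)/\sqrt{-\kappa}$ according to the sign of $\kappa$. For any $h$ harmonic on a geodesic ball $B=B^{\mathrm{geod}}(a,R)$, the angular mean $\bar h(r)=\frac{1}{2\pi}\int_{0}^{2\pi}h(r,\theta)\,d\theta$ satisfies $(f\bar h')'=0$ by direct substitution into the Laplacian in polar coordinates, and smoothness at $r=0$ together with $f(0)=0$ forces $\bar h(r)\equiv h(a)$. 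Integrating yields $\int_{B}h\,\mathrm{vol}^{n}=2\pi h(a)\int_{0}^{R}f(r)\,dr=V(R)\,h(a)$, so by the quadrature characterization of Theorem~\ref{thm:quadrature} the set $B^{\mathrm{geod}}(a,R)$ is the harmonic ball of mass $t=V(R)$. The explicit volume integral in each of the three cases produces the formulas in \eqref{trGammakappa}.

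For the converse, suppose the two families coincide at every centre. Then every small geodesic ball is a harmonic ball, so by Theorem~\ref{thm:quadrature} the quadrature identity $\int_{B^{\mathrm{geod}}(a,r)}h\,\mathrm{vol}^{n}=V(a,r)\,h(a)$ holds for every $h$ harmonic in a neighbourhood of the closed ball (apply the inequality to $\pm h$). Differentiating in $r$ gives the spherical mean value property on every small geodesic circle about $a$, which is the defining condition for $M$ to be a \emph{harmonic} Riemannian manifold. I will prove directly that in dimension two this forces $\kappa$ to be constant. In geodesic polar coordinates at $a$ write $ds^{2}=dr^{2}+J(r,\theta)^{2}d\theta^{2}$; the Jacobi-field equation $J_{rr}+\kappa(r,\theta)J=0$ with $J(0,\theta)=0$, $J_{r}(0,\theta)=1$ integrates iteratively to
\begin{equation*}
  J(r,\theta) \;=\; r \;-\; \frac{\kappa(a)}{6}r^{3} \;-\; \frac{\langle\nabla\kappa(a),\hat\omega(\theta)\rangle}{12}r^{4} \;+\; O(r^{5}),
\end{equation*}
where $\hat\omega(\theta)$ is the unit tangent direction at $a$. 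Expanding a Riemann-harmonic test function $h$ in Riemann normal coordinates at $a$ and using the constraints that $\Delta_{g}h=0$ places on its Taylor coefficients---which in two dimensions yield the trace identity $h_{ii}=0$ and the higher-order identity $h_{iikl}=2\kappa(a)h_{kl}$---a direct calculation shows that after angular integration the only surviving contribution at order $r^{5}$ in $\int_{0}^{2\pi}(h-h(a))\,J\,d\theta=0$ is
\begin{equation*}
  -\frac{\pi}{12}\,r^{5}\,\langle\nabla h(a),\nabla\kappa(a)\rangle.
\end{equation*}
Since $\nabla h(a)$ can be prescribed arbitrarily---solving the local Dirichlet problem for $\Delta_{g}h=0$ with any initial $(h(a),\nabla h(a))$---this forces $\nabla\kappa(a)=0$, and as $a$ was arbitrary, $\kappa$ is constant on $M$.

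The hardest part will be that $r^{5}$ computation: one must verify that, among the several candidate $r^{5}$ contributions in the integrand---the $A_{4}\cdot r$ piece built from fourth-order Taylor coefficients of $h$, the $A_{2}\cdot(-\frac{\kappa(a)}{6}r^{3})$ piece, and the $A_{1}\cdot(-\frac{\langle\nabla\kappa,\hat\omega\rangle}{12}r^{4})$ piece---only the last survives integration over $\theta$. The key ingredients are the elementary angular integrals $\int_{0}^{2\pi}\hat\omega^{i}\hat\omega^{k}\,d\theta=\pi\delta^{ik}$ and $\int_{0}^{2\pi}\hat\omega^{i}\hat\omega^{j}\hat\omega^{k}\hat\omega^{l}\,d\theta=\frac{\pi}{4}(\delta^{ij}\delta^{kl}+\delta^{ik}\delta^{jl}+\delta^{il}\delta^{jk})$, combined with the contracted harmonicity identity $h_{iikk}=2\kappa(a)h_{kk}=0$; these eliminate every other candidate and isolate $\langle\nabla h,\nabla\kappa\rangle$. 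Once constant curvature is established, the explicit radius-volume relation in \eqref{trGammakappa} follows from the same warped-product volume integral used in the forward direction.
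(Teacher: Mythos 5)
Your route is genuinely different from the paper's. For the converse, the paper argues globally and exactly: from the sphere mean-value property it derives boundary conditions for the Green's function $g_R$ of geodesic balls, observes that $\partial g_R/\partial R$ is harmonic in $B_R$ with constant boundary values, integrates to the explicit formula $g_R(r)=\int_r^R dt/{\rm vol}^1(\partial B_t)$, deduces from harmonicity of $g_R$ that $\rho(r,\varphi)$ depends on $r$ only, invokes Lemma~\ref{lem:killing} to get $\kappa=\kappa(r)$, and finally varies the centre. You instead run an infinitesimal, Pizzetti--Ledger-type Taylor expansion that extracts $\nabla\kappa(a)=0$ directly at every centre; this avoids the Green's-function machinery at the cost of a delicate order-$r^5$ computation. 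Your forward direction (angular means in the warped product) is an acceptable replacement for the paper's Killing-field symmetry argument, with one caveat: Theorem~\ref{thm:quadrature} characterizes ${\rm Bal}$ via the \emph{inequality} for subharmonic test potentials, while you only establish equality for harmonic $h$; either extend your angular-mean argument to subharmonic $h$ (where $(f\bar h')'\geq 0$ and the same regularity at $r=0$ gives $\bar h(r)\geq h(a)$), or invoke the paper's remark that for small $t$ the harmonic mean-value property is known to suffice for (\ref{balB}).

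There is, however, a concrete error in the identity on which your $r^5$ computation rests. Differentiating $\Delta_g h=0$ twice in normal coordinates and tracing does \emph{not} give $h_{iikk}=2\kappa(a)h_{kk}=0$: the third-order terms in the expansions $g_{ij}=\delta_{ij}-\frac13 R_{ikjl}x^kx^l-\frac16\nabla_m R_{ikjl}x^kx^lx^m+\ldots$ and $\log\sqrt{\det g}=-\frac16\mathrm{Ric}_{kl}x^kx^l-\frac1{12}\nabla_m\mathrm{Ric}_{kl}x^kx^lx^m+\ldots$ inject $\nabla\mathrm{Ric}$ contributions, and in dimension two (with conventions matching your Jacobi expansion of $J$) one finds $h_{iikk}(a)=\frac23\,\langle\nabla\kappa(a),\nabla h(a)\rangle$, not zero. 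Consequently your $A_4\cdot r$ piece \emph{does} survive angular integration, contributing $\frac{\pi}{32}h_{iikk}=\frac{\pi}{48}\langle\nabla\kappa,\nabla h\rangle$ at order $r^5$, and the full coefficient is $\bigl(\frac{\pi}{48}-\frac{\pi}{12}\bigr)\langle\nabla\kappa,\nabla h\rangle=-\frac{\pi}{16}\langle\nabla\kappa,\nabla h\rangle$ rather than $-\frac{\pi}{12}\langle\nabla\kappa,\nabla h\rangle$. You are saved by the fact that the two contributions do not cancel, so the conclusion $\nabla\kappa\equiv 0$ (hence constant curvature, and then (\ref{trGammakappa}) by the warped-product volume integral) still follows once the computation is corrected. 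But as written, the claim that ``only the last survives'' is false, and the entire risk of this approach is precisely a possible cancellation at this order; the corrected trace identity and the resulting non-cancellation must be verified explicitly before the proof stands.
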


\begin{remark}
The theorem is local in nature, and we do not require $M$ to be compact. It may be
just a small subdomain of a compact manifold, for example. \blsquarehere
\end{remark}

The natural framework for dealing with geodesic balls in two dimensions is geodesic polar coordinates, 
and   we start by giving a short discussion of such coordinates. 
Some more details can be found in  \cite[Section~10.3]{Frankel-2012} and \cite[Section~5.6]{Petersen-2006}.

Geodesic polar coordinates $(r,\varphi)$ centred at a point $a\in M$ bring the metric to the form
\begin{equation}\label{geodesiccoordinates}
ds^2=dr^2+\rho (r,\varphi)^2\, d\varphi^2
\end{equation}
for some function $\rho(r,\varphi)>0$. Here $\varphi$ is an angular parameter with period $2\pi$ and $r$ equals
the geodesic distance from the coordinate origin (\ie the point $a\in M$) to the point with coordinates $(r,\varphi)$. 
Geodesic polar coordinates exist only in a neighbourhood of $a$, so small that that geodesic balls are still topological balls,
more precisely up to the injectivity radius, the largest radius for which the exponential map is a diffeomorphism
(see  \cite[Section~5.9.2]{Petersen-2006} and \cite[Section~6.5]{Berger-2003}). 
Since there are no  mixed terms in (\ref{geodesiccoordinates}) the coordinates are orthogonal. The
point $a$ itself is singular for the geodesic coordinates. To account for this singularity, the function $\rho$ has to satisfy,
in the limit $r \to 0$,
\begin{equation}\label{limitsrho}
  \left\{
    \begin{array}{r}
      \phantom{\partial}\rho=0,\\[.2cm]
      \displaystyle \frac{\partial \rho}{\partial r}=1.
    \end{array}
  \right.
\end{equation}
More precisely, Taylor expansion of $\rho$ with respect to $r$ gives 
\begin{equation}\label{limitsrhoprecise}
\rho(r,\varphi)=r +r^2\sigma(r,\varphi)), 
\end{equation}
for some smooth function $\sigma(r,\varphi)$, $2\pi$-periodic in $\varphi$.

The level lines of $\varphi$ are exactly
the geodesic curves emanating from $a\in M$, and the level lines of $r$ are the 
geodesic spheres (the boundaries of the geodesic balls) centred at $a$. 
Thus the geodesic ball with radius $R>0$ is given in geodesic polar coordinates by
$$
B^{\rm geod} (a,R)=\{(r,\varphi): 0<r<R\}\cup \{a\}.
$$
The Gaussian curvature $\kappa$ of the metric (\ref{geodesiccoordinates}) is obtained from
\begin{equation}\label{rhokapparho}
\frac{\partial^2 \rho}{\partial r^2}+\kappa\rho=0,
\end{equation}
see again \cite{Frankel-2012}.

\begin{example}\label{ex:geodesiccoordinates}
If $\kappa$ is independent of $r$
then (\ref{rhokapparho}),  (\ref{limitsrho}) can be immediately integrated to give
\begin{equation}\label{rhosin}
\rho(r,\varphi)=
\begin{cases}
\displaystyle \frac{1}{\sqrt{\kappa}}\sin(\sqrt{\kappa}r) &\text{if } \kappa >0,\\[.2cm]
r\quad &\text{if } \kappa =0,\\[.2cm]
\displaystyle \frac{1}{\sqrt{-\kappa}}\sinh(\sqrt{-\kappa}r)\quad &\text{if } \kappa <0.
\end{cases}
\end{equation}
For $\kappa=0$ we can identify (\ref{rhosin}) with the standard polar coordinates in the Euclidean plane.
As an example with $\kappa>0$ we may let $a$ be the north pole on the sphere $M=\partial B(0,R)$  in $\R^3$ ($R>0$ being a fixed radius). On that
sphere we have the ordinary spherical coordinates $(\theta,\varphi)$ with $0<\theta<\pi$ and $\varphi$ being $2\pi$-periodic.
Then the coordinate origin $(0,0)$ corresponds to the point $a$, and the metric on $M$ is
$$
ds^2= R^2 \, d\theta^2 + R^2 \sin^2\theta \, d\varphi^2.
$$
This is of the form (\ref{geodesiccoordinates}) with
$$
\begin{cases}
r = R\theta,\\[.2cm]
\varphi= \varphi \quad (\rm{unchanged}),\\[.2cm]
\displaystyle \rho(r,\varphi)= R \sin \frac{r}{R}.
\end{cases}
$$
With $\kappa =1/R^2$ we can identify this with the first option in (\ref{rhosin}).
Compare similar examples in \cite{Berger-2003}.
\end{example}

In the example above, neither $\rho$ nor $\kappa$ depends on $\varphi$.
This signifies that motions by the vector field $\partial/\partial \varphi$ are rigid 
transformations. The following lemma gives some equivalent statements in this
respect,

\begin{lemma}\label{lem:killing}
For arbitrary geodesic polar coordinates centred at a point $a\in M$, the following statements are equivalent.

\begin{itemize}

\item[(i)] 
$\displaystyle \frac{\partial}{\partial \varphi} \,\rho(r,\varphi)=0$.

\item[(ii)] 
$\displaystyle \frac{\partial }{\partial \varphi}\, \kappa(r,\varphi)=0$.

\item[(iii)] $\displaystyle \frac{\partial}{\partial \varphi}$ is a Killing vector field.

\item[(iv)] $\varphi$ is a harmonic function (locally). 

\end{itemize} 

\end{lemma}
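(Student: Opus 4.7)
The plan is to establish (i)$\Leftrightarrow$(iii), (i)$\Leftrightarrow$(iv), and (i)$\Leftrightarrow$(ii) by three direct computations, using the metric form (\ref{geodesiccoordinates}) and the curvature equation (\ref{rhokapparho}). Since three of the four conditions each say that some quantity is $\varphi$-independent, the natural hub of the equivalences is (i).

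For (i)$\Leftrightarrow$(iii), I would compute $\mathcal{L}_X g$ where $X=\partial/\partial\varphi$. Because the components $X^k=\delta^k_\varphi$ are constant, the Lie derivative reduces to
\begin{equation*}
(\mathcal{L}_X g)_{ij}=\partial_\varphi g_{ij}.
\end{equation*}
The only metric component that could depend on $\varphi$ is $g_{\varphi\varphi}=\rho^2$, so $(\mathcal{L}_X g)_{\varphi\varphi}=2\rho\,\partial_\varphi\rho$, and the Killing condition $\mathcal{L}_X g=0$ is exactly $\partial_\varphi\rho=0$.

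For (i)$\Leftrightarrow$(iv), I would use the Laplace–Beltrami expression in the orthogonal coordinates $(r,\varphi)$, with $\sqrt{|g|}=\rho$:
\begin{equation*}
\Delta f=\frac{1}{\rho}\Bigl[\partial_r(\rho\,\partial_r f)+\partial_\varphi\Bigl(\frac{1}{\rho}\,\partial_\varphi f\Bigr)\Bigr].
\end{equation*}
Applied to the coordinate function $f=\varphi$ this gives $\Delta\varphi=-\rho^{-3}\,\partial_\varphi\rho$, which vanishes iff (i) holds.

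For (i)$\Leftrightarrow$(ii) I would use (\ref{rhokapparho}). The direction (i)$\Rightarrow$(ii) is immediate: if $\rho$ is $\varphi$-independent then $\kappa=-\rho_{rr}/\rho$ is too. The converse is the only slightly subtle step: if $\kappa=\kappa(r)$ depends only on $r$, then for each fixed $\varphi$ the function $r\mapsto\rho(r,\varphi)$ solves the same linear ODE $\rho_{rr}+\kappa(r)\rho=0$, with the same initial data $\rho(0,\varphi)=0$, $\partial_r\rho(0,\varphi)=1$ dictated by (\ref{limitsrho}) (more precisely, by the expansion (\ref{limitsrhoprecise})). By uniqueness for linear ODEs with prescribed values and derivatives at $r=0$, $\rho(\cdot,\varphi)$ is independent of $\varphi$, giving (i). The small obstacle here is that $r=0$ is a singular point of the polar chart, but the smooth expansion (\ref{limitsrhoprecise}) makes the initial-value problem perfectly well-posed, so the ODE uniqueness argument goes through without trouble.
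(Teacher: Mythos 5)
Your proposal is correct and follows essentially the same route as the paper's proof: the same coordinate computation of the Lie derivative for $(i)\Leftrightarrow(iii)$, the same Laplace--Beltrami formula applied to the coordinate function $\varphi$ for $(i)\Leftrightarrow(iv)$, and the same observation that (\ref{rhokapparho}) together with the initial conditions (\ref{limitsrho}) is a well-posed initial value problem in $r$ with $\varphi$ as a parameter, so ODE uniqueness yields $(ii)\Rightarrow(i)$. Your explicit remark that the expansion (\ref{limitsrhoprecise}) handles the apparent singularity of the chart at $r=0$ is a point the paper leaves implicit, but otherwise the two arguments coincide.
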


Recall that the meaning of $(iii)$ is that the Lie derivative
by the vector field $\xi=\frac{\partial}{\partial \varphi}$ acting on the metric tensor vanishes:
$$
\mathcal{L}_{\xi} (dr\otimes dr +\rho(r,\varphi)^2 \, d\varphi \otimes d\varphi )=0,
$$
and that this can be interpreted as saying that the flow defined by $\xi$
is a one-dimensional flow by isometries.

\begin{proof} 
The equivalence  $(i)\Longleftrightarrow(ii)$  is obvious from the system (\ref{rhokapparho}),  (\ref{limitsrho}),
which is an initial value problem on standard form for $\rho$ as a function of $r$, having $\varphi$ as a parameter.

As for $(iii)$ we simply compute the Lie derivative in the local coordinates given. The result is
$$
\mathcal{L}_{\xi} (dr\otimes dr +\rho(r,\varphi)^2 \, d\varphi \otimes d\varphi )=\frac{\partial(\rho(r,\varphi)^2)}{\partial \varphi}\,d\varphi\otimes d\varphi,
$$ 
from which we immediately obtain $(i)\Longleftrightarrow(iii)$.

Turning to $(iv)$, a function $h$ is harmonic if and only if $d*dh=0$, and in geodesic polar coordinates this spells out to
$$
\frac{\partial}{\partial r} \left( \rho(r,\varphi) \, \frac{\partial h}{\partial r} \right)+ \frac{\partial}{\partial \varphi} \left( \frac{1}{\rho(r,\varphi)} \, \frac{\partial h}{\partial \varphi} \right) = 0.
$$
Thus $h=\varphi$ is harmonic if and only if $\frac{\partial}{\partial \varphi}(\frac{1}{\rho(r,\varphi)})=0$, \ie if and only if $(i)$ holds.

\end{proof}

When the equivalent conditions in Lemma~\ref{lem:killing} hold, then the conjugate harmonic function of $\varphi$ is, up to 
additive and multiplicative constants, the ordinary Green's function $g_R=g_R(\cdot,a)$ (with pole at $a$) 
for $B^{\rm geod}(a,R)$, for any small $R$. 
In fact, if the conjugate harmonic function of $\varphi$ is denoted $\psi$ the defining relationship is
$$
d\psi=*d\varphi.
$$
Now, using that $\rho$ does not depend on $\varphi$  (by assumption), 
$$
*d\varphi=\frac{1}{\rho(r)}(*\rho(r)d\varphi)=-\frac{1}{\rho(r)}\,dr,
$$
so 
\begin{equation}\label{psiint}
\psi=-\int \frac{dr}{\rho(r)}.
\end{equation}
This indefinite integral contains a constant of integration, which may depend on $\varphi$, and that constant can be adjusted so that $\psi = 0$ on $\partial B_R$. 
The strength of the singularity of $\psi$ at the point $a$ is linked to the increase of $\varphi$ by $2\pi$ as $a$ is encircled. 
Altogether we find that the function
$$
g_R = \frac{1}{2\pi} \psi,
$$
is exactly the (ordinary) Green's function for $B_R$, in the sense that it satisfies $-d*dg_R =\delta_a$ in $B_R$ and has boundary values $g_R=0$ on $\partial B_R$.

\begin{example}
If $\kappa$ is constant we can evaluate the integral (\ref{psiint}) by using the  explicit expressions (\ref{rhosin}) for $\rho$.
When $\kappa>0$, for instance, this gives
$$
\psi= \log \left| \cot \frac{\sqrt{\kappa}r}{2} \right| + {\rm constant},
$$ 
and so
$$
g_R(r)= \frac{1}{2\pi} \left( \log \left| \cot \frac{\sqrt{\kappa}r}{2} \right| - \log \left| \cot \frac{\sqrt{\kappa}R}{2} \right| \right) \quad (0<r<R).
$$ 
\end{example}

Now we turn to the proof of the theorem.

\begin{proof}[Proof of Theorem~\ref{thm:equivalentballs}]
Assume first  that $\kappa$ is constant and fix a point ${a\in M}$. 
Lemma~\ref{lem:killing} shows that, for geodesic polar coordinates $(r,\varphi)$ centred at $a$, $\xi=\frac{\partial}{\partial \varphi}$ is a Killing vector field. 
Thus the flow by $\xi$ consists of  rigid transformations which keep $a$ fixed. But it is obvious from construction that both types of balls,  geodesic
and harmonic balls centred at $a$, are uniquely determined by their radii or volumes and that they are invariant under such transformations. It follows
that  the two  families of balls are the same. The relationships (\ref{trGammakappa})
between $r$ and $t$ are obtained by elementary calculations.

For the other direction of the theorem, assume that (\ref{geodharm}) holds for all small $r,t>0$ and all $a\in M$. 
We start by fixing one point $a\in M$ and choosing geodesic polar coordinates $(r,\varphi)$ centred at $a$.
The mean-value property
$$
\int_{B_R}\, h \,{\rm vol}^2 = {\rm vol}^2(B_R)\,h(a)
$$ 
holds (by assumption (\ref{geodharm})) for $B_R=B^{\rm geod}(a,R)$, for every small $R>0$ and all integrable harmonic functions $h$ in $B_R$.
Differentiation of this identity with respect to $R$ gives the corresponding mean-value identity in terms of boundary integrals:
$$
\int_{\partial B_R}\, h \,{\rm vol}^1 = {\rm vol}^1(\partial B_R)\,h(a).
$$ 
This holds for harmonic function $h$ which are, say, continuous up to $\partial B_R$. The differentiation gives more precisely
the one-dimensional volume element in form of an interior derivative of the two-dimensional volume element, as 
$$
{\rm vol}^1=i \left( \frac{\partial}{\partial r} \right) {\rm vol}^2 = i \left( \frac{\partial}{\partial r} \right) \rho(r,\varphi) \, dr\wedge d\varphi=\rho(r,\varphi) \, d\varphi.
$$

On the other hand, we have quite generally a similar  identity with the boundary integral weighted with the normal derivative of the Green's function.
In the language of differential forms this looks
$$
-\int_{\partial B_R}\, h \, *dg_R = h(a).
$$   
On comparison we conclude that
\begin{equation}\label{alongdB}
-*dg_R = \frac{1}{ {\rm vol}^1(\partial B_R)}\, \rho(r,\varphi) \, d\varphi \quad {\rm along}\,\, \partial B_R.
\end{equation}
In general terms we have
\begin{align*}
  *dg_R &= * \left( \frac{\partial g_R}{\partial r}\,dr +\frac{1}{\rho(r,\varphi)} \frac{\partial g_R}{\partial \varphi}\,\rho(r,\varphi) \, d\varphi \right) \\
        &= -\frac{1}{\rho(r,\varphi)} \frac{\partial g_R}{\partial \varphi}\,dr +\frac{\partial g_R}{\partial r}\,\rho(r,\varphi) \, d\varphi,
\end{align*}
so (\ref{alongdB}) spells out to
\begin{equation}\label{gRboundary}
\begin{cases}
\displaystyle -\frac{\partial g_R}{\partial r}= \frac{1}{ {\rm vol}^1(\partial B_R)}\quad &{\rm on}\,\, \partial B_R,\\[.4cm]
\displaystyle \phantom{-}\frac{\partial g_R}{\partial \varphi}=0 \quad &{\rm on}\,\, \partial B_R.
\end{cases}
\end{equation}

Now $g_R$ is harmonic in $B_R\setminus \{a\}$ and has a fixed singularity at $a$.  
Thus, if we differentiate $g_R$ with respect to $R$ 
we obtain a harmonic function $\partial g_R/\partial R$ in $B_R$ without singularities.
On the boundary $\partial B_R$ we have
$$
g_R(R,\varphi)=0,
$$
and differentiating this with respect to $R$ gives
$$
\frac{\partial g_R}{\partial R}+\frac{\partial g_R}{\partial r}=0 \quad {\rm on}\,\,\partial B_R.
$$
In view of (\ref{gRboundary}) and the harmonicity of $\partial g_R/\partial R$ in $B_R$ this entails
$$
\frac{\partial g_R}{\partial R}= \frac{1}{ {\rm vol}^1(\partial B_R)} \quad {\rm in}\,\, B_R.
$$

Integrating the above identity from some $R_0<R$ to $R_1=R$ gives, for $0<r<R_0$,
$$
g_{R}(r,\varphi) -g_{R_0}(r,\varphi) =\int_{R_0}^R  \frac{dt}{ {\rm vol}^1(\partial B_t)}.
$$
On letting $r\to R_0$ the second term disappears and we get the explicit formula
$$
g_R(r,\varphi)=\int_{r}^R  \frac{dt}{ {\rm vol}^1(\partial B_t)}.
$$ 

In particular we see that $g_R$ does not depend on $\varphi$ and that the gradient of $g_R$
in addition does not depend on $R$:
$$
\nabla g_R = -\frac{1}{{\rm vol}^1(\partial B_r)}\,\frac{\partial }{\partial r}.
$$
A side remark here is that this says that the Hele-Shaw flow moving boundary problem, which has (minus)
the gradient of the Green's function as its velocity field in the fluid region, is a stationary flow in the
present situation, namely when the flow is driven a point source and starts from empty space in a constant curvature 
(two-dimensional) manifold. 

Knowing now that $g_R$  is independent of $\varphi$, the fact that $g_R(r,\varphi)=g_R(r)$ is a harmonic function (for $0<r<R$)
becomes
$$
\frac{\partial}{\partial r} \left( \rho(r,\varphi)\frac{\partial g_R(r)}{\partial r} \right) = 0.
$$
From this it follows  that
$\frac{\partial}{\partial r}\log \rho(r,\varphi)$ is a function only of $r$, so that
$$
\frac{\partial^2}{\partial \varphi \, \partial r}\log \rho(r,\varphi)=0.
$$   
But the general solution of the latter equation is of the form
$$
\log \rho({r,\varphi})= A(r)+B(\varphi),
$$
and using the behaviour (\ref{limitsrhoprecise}) of $\rho$ as $r\to 0$ one deduces that the function $B(\varphi)$
must be constant. Thus $\rho(r,\varphi)$ depends only on $r$.

Using Lemma~\ref{lem:killing} it now follows that also $\kappa$ depends only on $r$. But so far we have used the assumption (\ref{geodharm})
only at one point, $a\in M$. Repeating the same procedure for a nearby point gives the final
conclusion that $\kappa$ indeed is constant.
\end{proof}

\begin{remark}
It is possible to prove the easy direction the theorem (that constant curvature implies coincidence of balls)
by using a local version of the (rather deep) uniformisation theorem, namely by
introducing a local complex coordinate $z$ in which the metric takes the form 
\begin{equation}\label{dskappa}
ds=\frac{2|dz|}{1+\kappa |z|^2}.
\end{equation}
Taking then $z=0$ to correspond to the point $a\in M$ the proof becomes very easy.

A somewhat related observation is the following. Consider again the constant curvature metric (\ref{dskappa}), restricting to $|z|<1/\sqrt{-\kappa}$ if $\kappa$ is negative.
Then, by Theorem~\ref{thm:equivalentballs}, the class of geodesic balls coincides with the class of harmonic balls, and their centres agree. 
What we wish to remark here is that this class of balls in addition coincides with the usual Euclidean disks, 
but that the centres then will be different. In other words, any Euclidean disk $|z-a|<r$ in the complex plane is a geodesic and harmonic ball with respect to  (\ref{dskappa}),
but its centre as such a ball will depend on $\kappa$, and will in particular not coincide with $a$ unless $\kappa=0$ (or $a=0$). \blsquarehere

\end{remark}


\section{Remarks on noncompact manifolds}\label{sec:nonclosed}

We remark here on the modifications needed for the case of a manifold $M$ with boundary $\partial M$, which then itself is a 
manifold (of one dimension lower). For simplicity, we shall stay within the finite energy setting, and then the treatment can be based
on Hodge decompositions for manifolds with boundary, see \cite{Schwarz-1995}. 
We shall make no attempt of giving a complete theory of partial balayage on open manifolds in this paper.

Having a boundary means that boundary conditions have to be taken into account. 
Starting out from Definition~\ref{def:partialbalayage} there are  several natural options on what to impose on $u$:

\begin{itemize}

\item Dirichlet data: $u=0$ on $\partial M$.

\item Relaxed Dirichlet data: $u=\text{free constant}$ on $\partial M$, together with 
\begin{equation}\label{zeroflow}
\int_{\partial M}*du=0.
\end{equation}

\item Hydrodynamic type data: $du=0$ along $\partial M$, together with the zero flux condition (\ref{zeroflow}) holding for each individual component 
of $\partial M$. 
(If $\partial M$ has only one component this case is the same as the previous.)

\item Neumann data: $*du=0$ along $\partial M$.
\end{itemize}

The case of Dirichlet data is quite straight-forward:  $u$ is then to belong to the Sobolev space $W^{1,2}_0(M)$, where now the subscript $0$ signifies zero boundary values, 
and the theory becomes based on the isometric isomorphism
\begin{equation}\label{WW}
-d*d: W^{1,2}_0(M)\to W^{-1,2}(M)_n.
\end{equation}
Not all elements of  $W^{-1,2}(M)_n$ are charge distributions, but those which are make up a dense
subset of $W^{-1,2}(M)$ (see \cite{Treves-1975} for the Euclidean case). The Green's operator, or rather the map taking
charge distributions to Green's potentials, is simply the inverse of (\ref{WW}),
$$
G: W^{-1,2}(M)_n\to W^{1,2}_0(M).
$$
Green's potentials are defined accordingly, and everything works out with minor modifications (simplifications actually), compared to the compact case.

As for the balayage process, say in the form  $\sigma\mapsto {\rm Bal}(\sigma,0)$, it is important to take into account that some (or even all) 
of the mass may go to the boundary, and this mass should be kept track of,
even though, in the present paper, the notation ${\rm Bal}(\sigma,0)$ refers only to the mass within $M$.
The assumption (\ref{sigmanegative}) is not needed in the case of Dirichlet boundary conditions, and one may
even start with a positive current: $\sigma\geq 0$. In that case all mass will go to the boundary and partial balayage
will simply be the same as classical balayage of $\sigma$ to $\partial M$. The resulting measure  is absolutely continuous 
with respect to $(n-1)$-dimension measure on $\partial M$ and its density is (minus)  the outward normal derivative 
$\partial u/\partial n$ of the function $u$ in (\ref{complementaritysystem}). In other words, it is represented by 
(minus) the $(n-1)$-form $*du$ on $\partial M$. 

The other three types of boundary conditions all ensure mass balance within $M$:
$$
 \int_M{\rm Bal}(\sigma,0)-\int_M\sigma=\int_M d*du=\int_{\partial M}*du=0.
$$
Therefore  (\ref{sigmanegative}) will be a necessary assumption in these cases.
It should also be noted that all four kinds of boundary conditions guarantee partial integration free of boundary terms: if $u$ satisfies anyone of the mentioned
boundary conditions and $v$ satisfies the same, then
\begin{equation}\label{partialintegration}
\int_M du\wedge*dv= \int_{\partial M} u\wedge *dv -\int_M u\wedge d*dv= -\int_M u\wedge d*dv.
\end{equation}
This is important because it makes the theory for compact manifolds carry over smoothly, with only minor changes, to the case of manifolds with boundary.
For example, the potential $u$ in the definition of partial balayage will in all cases be characterized by the complementarity system (\ref{complementaritysystem}),
together with the given boundary conditions. 

The difference between vanishing Dirichlet data and those which are ``relaxed''  is essentially the requirement of mass conservation (\ref{zeroflow}) within $M$, which may be expressed as 
\begin{equation}\label{intdud}
\int_{M} d*du=0. 
\end{equation}
If one adds this condition directly to the zero Dirichlet data, then one can no longer infer that $u\geq 0$, and the complementarity system fails in the way it is written
in (\ref{complementaritysystem}). However, one can recover these properties if one just adjusts the additive level of $u$ in the same way as was done in the beginning of Section~\ref{sec:variational},
so that (\ref{complementarity}) holds. But then $u$ will (in general) not be zero on the boundary, it may take another constant value, which cannot be prescribed in advance.

So this is the meaning of the relaxed Dirichlet data. It is closely related to boundary conditions which are used for the stream function in two dimensional fluid mechanics.
This stream function takes free constant values on each boundary component, expressing that the flow is parallel to the boundary. This is then complemented by prescribing
the circulations around the holes, in accordance with Kelvin's theorem (conservation of circulations).

Thus hydrodynamic data requires that $du=0$ along $\partial M$, and that $*du$ is exact in a neighbourhood of $\partial M$. The case of Neumann data is more or less a dual version
of this: $*du=0$ along $\partial M$, while $du$ is exact already from outset.
There is a rather elegant way of reducing boundary value problems with zero Neumann data to the case of a compact manifold by means of 
a doubling procedure of P.E.~Conner \cite{Conner-1954, Conner-1956} and K.O.~Friedrichs \cite{Friedrichs-1955}. In the case of two dimensions, the corresponding idea
goes back to F.~Schottky \cite{Schottky-1877}.
The doubling procedure means more precisely that one adds, to ${M\cup \partial M}$, a copy $\tilde{M}$ of $M$ and glues it along the boundary
so that a compact manifold $\hat{M}= M\cup \partial M\cup \tilde{M}$ is obtained. 
The differentiable structure of $\hat{M}$ requires that the gluing is made via coordinate maps which (locally) take $\partial M$ into ${\{x_n=0\}\subset \R^n}$ and
neighbouring parts of $M$ into $\{x_n>0\}$; such maps are postulated in the definition of a manifold with boundary (see \cite{Schwarz-1995}). Then $\tilde{M}$ has the
corresponding maps, and before gluing one changes the orientation of $\tilde{M}$ by composing its coordinate maps with the reflection  $x_n\mapsto -x_n$. 
Eventually one pastes along $x_n=0$ in coordinate space. The resulting compact manifold (with its metric) will in general not be smooth across the boundary, 
but for appropriate choices of coordinates one can ensure that the metric tensor becomes Lipschitz continuous (see again \cite{Conner-1956, Friedrichs-1955}), 
which is good enough for the idea to work. An example, and some further discussion will be provided in Section~\ref{sec:double of ball}.

As for the partial balayage in the Neumann case, one turns it into a problem in $\hat{M}$ by taking the same data on $\tilde{M}$
as on $M$. Then, on $\hat{M}$, one has data which are symmetric with respect to the natural involution on $\hat{M}$. 
This enforces homogeneous Neumann data on $\partial M$.

For open manifolds in general, a natural method is to try exhaust the manifold by manifolds with boundary and select suitable boundary conditions for these.
It turns out that Dirichlet boundary conditions for the exhausting sequence is not a good choice because some mass is moved to the boundary, and this mass may 
eventually be lost in the limit. In Section~\ref{sec:ball} we give an example showing that this can occur in dimension $n\geq 3$, with $M=\R^n$.
On the other hand, we also show (Theorem~\ref{thm:excess}) that this does not happen in dimensions $n=1,2$.

The Euclidean case $M=\R^n$ can also be treated directly. It is natural to insist on mass conservation, \ie that  (\ref{intdud}) holds.
As indicated by (\ref{uvG}) one  expects the function $u$ to behave at infinity as a Newtonian or logarithmic ($n=2$) potential of a
compactly supported zero net mass distribution, modulo an additive constant. This means that
\begin{equation}\label{asymptoticsu}
du(x)=\mathcal{O} \left( \frac{1}{|x|^{n}} \right) \quad \text{as\,\,} x\to\infty,
\end{equation}
from which follows that  $du\in L^2(\R^n)_1$. Thus Definition~\ref{def:partialbalayage} can be used as stated, with just the additional requirement of mass balance
(\ref{intdud}) (the asymptotics (\ref{asymptoticsu}) need not be required explicitly).
These assumptions ensure partial integration without boundary contributions, as in (\ref{partialintegration}), and then existence and uniqueness of partial balayage
follow (assuming (\ref{sigmanegative})). Also (\ref{complementarity}) follows, 
after normalization of the additive constant in $u$ as in the beginning of Section~\ref{sec:variational}. 

Most of previous treatments of partial balayage in Euclidean space have been based either on Dirichlet boundary conditions in bounded domains (or at least domains
admitting an ordinary  Green's function), or else on full space $\R^n$ with the simplifying assumption that $\sigma_-$ is so big that the function $u$
automatically vanishes in a full neighbourhood of infinity.


\section{Simple examples of partial balayage}\label{sec:examples}

\subsection{A one-dimensional example}\label{sec:onedimension}

Even though the one dimensional case is not of primary interest, it gives a possibility to construct simple examples
and to build up the intuition. There is only one (up to diffeomorphisms) closed manifold of dimension one,
and this can be represented by the unit circle $S^1$, or by $\R/ \Z$. 
Using the latter,  functions, currents (etc.) on
$M$ get represented by periodic functions (etc.) on $\R$, or as the corresponding objects defined on the single period interval $[0,1)$
in such a way that they have good periodic extensions. The Riemannian metric will be $ds^2=dx^2$,
where $x$ is the coordinate on $\R$. 

The situation in dimension one differs from all higher dimensions in that all charge distributions have finite energy
and all potentials are continuous functions. Thus partial balayage always exists whenever (\ref{sigmanegative}) holds. 
The point with the example below is partly just to illustrate the general theory by computing all functions involved, but it is also good to see
the difference between one dimension and two (and higher) dimensions by comparing it with the example in Section~\ref{sec:singular}.

Representing $M$ by the single period interval $[0,1)$ we shall compute ${\rm Bal(\sigma,0)}$ with
\begin{equation}\label{sigmaab}
\sigma= \delta_a-2\delta_b,
\end{equation}
where $0\leq a<b<1$. 
Since ${\rm vol^1}(M)=1$ and $\int_M \sigma=-1$, the parameter $\textsf{t}$ in (\ref{t}) is $\textsf{t}=1$. 
Therefore, the equation (\ref{psirelaxed}) for the potential $\psi$ becomes $\psi''=\delta_a-2\delta_{b}+1$.
Integrating twice, taking into account that $\psi$ must be extendable to a periodic function without this causing extra contributions to
$\psi''$, gives, for $0\leq x<1$, 
$$
\psi(x)= \frac{1}{2}\,|x-a|-  |x-b| + \frac{1}{2}\, x^2  +(a-2b)x+C,
$$
where $C=\frac{1}{12}-\frac{a^2}{2}+b^2$ if the normalization (\ref{psivol}) is imposed.

In order to compute $u$ and $v$ we must know the outcome of the balayage process.
But there is actually not much choice, the mass at $a$ must go into the only available hole, at $b$. This gives
\begin{equation}\label{nostructure}
{\rm Bal}({\delta_a-2\delta_b,0})= -\delta_b,
\end{equation}
from which we easily get $u$ and $v$: they have to satisfy $u''=-\delta_a+\delta_{b}$ and $v''=-\delta_{b}+1$,
and integrating these equations twice taking into account periodicity constraints gives, with $C$ as above,
$$
\begin{cases}
\displaystyle u(x)=- \frac{1}{2}|x-a|+  \frac{1}{2}|x-b|+(b-a)x+(b-a)\left(\frac{1}{2}-b\right), \\[.4cm]
\displaystyle v(x)=-  \frac{1}{2}|x-b| + \frac{1}{2} \, x^2 -bx+ (b-a)\left(\frac{1}{2}-b\right)+C.
\end{cases}
$$
In the notations of Section~\ref{sec:structure} we obtain, within the period $[0,1)$,
$$
\omega=[0,1)\setminus \{a\},\quad \Omega=[0,1),
$$
$$
\mu=\delta_b, \quad \sigma_ -=2\delta_b.
$$
In particular we see that the structure formula (\ref{structure}) does not hold in this case.


\subsection{A singular case on the sphere}\label{sec:singular}

Here we take $M=S^2$, the unit sphere in $\R^3$, and with the Riemannian metric inherited from
$\R^3$. This means that in standard spherical coordinates $(\theta,\varphi)$, with
$0\leq \theta\leq \pi$, $0\leq \varphi <2\pi$, the metric is given by
$$
ds^2= d\theta^2 + \sin^2\theta \,d\varphi^2.
$$
The volume (or area) form is
$$
{\rm vol}^2=\sin\theta \,d\theta \wedge d\varphi.
$$

We shall try to perform the partial balayage ${\rm Bal(\sigma,0)}$, for a choice of $\sigma$ of the same kind as in the previous example (Section~\ref{sec:onedimension}), namely
\begin{equation}\label{sigmaNS}
\sigma= \delta_N-2\delta_S.
\end{equation}
Here $N$ and $S$ denote the north and south poles, given in spherical coordinates by $\theta=0$ for $N$, $\theta=\pi$ for $S$,
and with $\varphi$ being indeterminate in both cases. The energy of $\sigma$ is infinite, both at $N$ and at $S$.
Now ${\rm vol}^2(M)=4\pi$, $\mathsf{t}= -\mathsf{m}(\sigma) = 1/4\pi$, and the equation for $\psi$ becomes
$$
d*d\psi =\delta_N -2\delta_S + \frac{1}{4\pi} \, {\rm vol}^2.
$$
This can be solved explicitly, and the result is the potential
\begin{equation}\label{psisingular}
\psi= \frac{1}{4\pi} \left( \log \sin^2 \frac{\theta}{2} -2 \log \cos^2 \frac{\theta}{2}-1 \right),
\end{equation}
where the additive level is adjusted so that (\ref{psivol}) holds. 

If everything were as in the one-dimensional case above, then we would have
$$
u= \frac{1}{4\pi} \left( -\log \sin^2 \frac{\theta}{2} + \log \cos^2 \frac{\theta}{2} \right) ,
$$
$$
v= -\frac{1}{4\pi} \left( \log \cos^2 \frac{\theta}{2}+1 \right) .
$$
However, these potentials do not have the right properties, for example $u$ is not bounded from below, so it is impossible to make it
satisfy $u\geq 0$, even after adjustment of constants. Hence $v$ does not satisfy $v\geq \psi$. 

In fact, in the present case there is no function $v$ whatsoever which satisfies the requirements in Definition~\ref{def:partialbalayage2} in the sense that it satisfies
$v\geq \psi$ and $\Delta v\leq \textsf{t}$. For if $v$ is to be as big as $\psi$ at $S$, then $\Delta v$ has to have at least the negative contribution
$-2\delta_S$ at $S$, which then has to be compensated by the same amount of positive contribution somewhere else. And that is not possible
under the constraint $\Delta v\leq \textsf{t}$. The conclusion is that ${\rm Bal}({\delta_N-2\delta_S,0})$ does not exist.

Similarly, ${\rm Bal}({\delta_N-\delta_S,0})$ does not exist, despite what was said about the case $\int_M \sigma =0$ in the beginning of Section~\ref{sec:variational}.


\subsection{A mixed case on the sphere}\label{sec:mixed}

Here we shall soften up the previous example by introducing a volume term. We consider
\begin{equation}\label{sigmaNSvol}
\sigma= \delta_N-2\delta_S -\alpha \, {\rm vol}^2
\end{equation}
for suitable values of $\alpha>0$.
Even though none of $\sigma_{\pm}$ have finite energy we shall see that the results are better, provided $\alpha$ is large enough.

As a preparation we consider the more regular case 
\begin{equation}\label{sigmaNvol}
\sigma= \delta_N -\alpha \, {\rm vol}^2,
\end{equation}
for which $\sigma_-$ has finite energy.
When $0\leq \alpha<\frac{1}{4\pi}$ the partial balayage ${\rm Bal}(\sigma,0)$ of $\sigma$ in (\ref{sigmaNS}) does not exist because 
(\ref{sigmanegative}) is violated. So assume that
\begin{equation}\label{alphabound}
\alpha \geq  \frac{1}{4\pi}.
\end{equation}
Then everything is straightforward, for example $\psi$ has to satisfy 
$$
d*d\psi =\delta_N    -\alpha\, {\rm vol}^2+ \left( \alpha-\frac{1}{4\pi} \right) {\rm vol}^2 = \delta_N - \frac{1}{4\pi}\mathrm{vol}^2,
$$
which gives
$$
\psi= - G^{\delta_N}=\frac{1}{4\pi} \left( \log \sin^2 \frac{\theta}{2}+1 \right),
$$
and the balayage of the excess mass $\delta_N$ fills up the available ``hole'' (represented by $-\alpha \, {\rm vol}^2$)
in a circular neighbourhood of $N$. Precisely:
\begin{equation}\label{balNvol}
{\rm Bal}( \delta_N -\alpha \, {\rm vol}^2,0)=- \alpha \, {\rm vol}^2|_K,
\end{equation}
where $K$ is the unfilled part, defined by an equation $\theta_0\leq \theta\leq \pi$ with $\theta_0$ chosen so that $\alpha\, {\rm vol}^2(M\setminus K)=1$. 
The equation for $\theta_0$ becomes, more precisely,
$$
2\pi \alpha \left( 1-\cos \theta_0 \right) =1.
$$
A perhaps more intuitive way of writing (\ref{balNvol}) is 
$$
{\rm Bal}( \delta_N, \alpha \, {\rm vol}^2)= \alpha \, {\rm vol}^2|_\Omega,
$$
where $\Omega=S^2\setminus K$.

\begin{remark}
Translating the above formulae to the complex plane by stereographic projection from the north pole to the equatorial plane (to be identified with $\C$), so that 
$$
z=e^{i\varphi} \cot \frac{\theta}{2},
$$
gives
$$
ds^2=\frac{4|dz|^2}{(1+|z|^2)^2}, \quad
G^{\delta_N}=-\frac{1}{4\pi} \left( \log \frac{1}{1+|z|^2} + 1 \right).
$$
\blsquarehere
\end{remark}

Now, with the same lower bound (\ref{alphabound}) on $\alpha$ we return to (\ref{sigmaNSvol}). The hole becomes bigger in the
presence of the term $-2 \delta_S$, but it has infinite energy. Does this change anything? No, it turns out to that the new term
$-2\delta_S$ is just left untouched.  The equation for $\psi$ is the same as in Section~\ref{sec:singular}, because the value of
the parameter $\textsf{t}$ (see (\ref{t})) changes as a compensation for the volume term.  The new value is
$\textsf{t}= \frac{1}{4\pi}+\alpha$ and we have
$$
d*d\psi =\delta_N -2\delta_S -\alpha \, {\rm vol}^2+ \left( \frac{1}{4\pi}+\alpha \right) {\rm vol}^2.
$$
Thus $\psi$ is again given by (\ref{psisingular}). However, when  (\ref{alphabound}) holds it is now possible to find functions $v$ satisfying $v\geq \psi$ and $\Delta v\leq \textsf{t}$,
this due to $\textsf{t}$ now being bigger. In fact, 
$$
v= -\frac{1}{2\pi} \log \cos^2 \frac{\theta}{2}
$$
is a competing function, and it follows that ${\rm Bal}(\sigma,0)$ exists. 

As a remark, any function $v$ satisfying $v\geq \psi$ has, at $S$, a singularity at least as big as that of $\psi$, hence cannot have finite energy. Thus the cone $\mathcal{K}$ in
(\ref{K}) is empty, and we conclude that Definition~\ref{def:partialbalayage2} is in fact more general than what a definition based directly on Theorem~\ref{thm:partialbalayage1}
would have been.


\subsection{Examples on spheres in higher dimensions}
\label{sec:example-on-3-sphere}

Let us consider $M = S^3$, the unit sphere in $\R^4$, with the
inherited metric. For coordinates on $M$ we use the hyperspherical
coordinates $(\xi, \theta, \phi)$ defined by
\begin{align}
  \left\{
  \begin{array}{l}
    x_1 = \cos \xi, \\
    x_2 = \sin \xi \cos \theta, \\
    x_3 = \sin \xi \sin \theta \cos \phi, \\
    x_4 = \sin \xi \sin \theta \sin \phi,
  \end{array}
  \right.
\end{align}
where $0 \leq \xi \leq \pi$, $0 \leq \theta \leq \pi$ and
$0 \leq \phi \leq 2\pi$. The metric then becomes
\begin{align}
  ds^2 = d \xi^2 + \sin^2 \xi \, d \theta^2 + \sin^2 \xi \sin^2 \theta \, d \phi^2
\end{align}
in the hyperspherical coordinates, and so the volume form on $M$ is
\begin{align}
  {\rm vol}^3 = \sin^2 \xi \sin \theta \, d \xi \wedge d \theta \wedge d \phi.
\end{align}
Let $N = (1, 0, 0, 0)$ be the north pole on the sphere, corresponding
to $\xi = 0$ and both $\theta$ and $\phi$ indeterminate, and once more
consider the partial balayage of the charge distribution $\sigma$ defined
by
\begin{align}
  \sigma = \delta_N - \alpha \, {\rm vol}^3.
\end{align}
The volume of $M$ is
\begin{align}
  {\rm vol}^3(M) = \int_0^\pi \sin^2 \xi \, d\xi \int_0^\pi \sin \theta \, d\theta \int_0^{2\pi} d\phi = 2 \pi^2,
\end{align}
hence
\begin{align}
  \mathsf{t} = - \mathsf{m}(\sigma) = - \frac{1}{2\pi^2} \int \sigma = \alpha - \frac{1}{2\pi^2}.
\end{align}
We thus assume that $\alpha \geq 1/2\pi^2$ to ensure $\mathsf{t} \geq 0$.

In the hyperspherical coordinates the Laplacian becomes
\begin{align}
  \Delta f = \frac{1}{\sin^2 \xi} \left[ \frac{\partial}{\partial \xi} \left( \sin^2 \xi \frac{\partial f}{\partial \xi} \right) + \frac{1}{\sin \theta} \frac{\partial}{\partial \theta} \left( \sin \theta \frac{\partial f}{\partial \theta} \right) + \frac{1}{\sin^2 \theta} \frac{\partial^2 f}{\partial \phi^2} \right].
\end{align}
The equation for $\psi$ in the decomposition of $\sigma$ is
\begin{align}
  d * d \psi = \delta_N - \alpha \, \mathrm{vol}^3 + \left( \alpha -
  \frac{1}{2\pi^2} \right) \mathrm{vol}^3 = \delta_N -
  \frac{1}{2\pi^2} \, \mathrm{vol}^3.
\end{align}
For sake of finding $\psi$ we thus need to solve the equation
\begin{align}
  \Delta f = -\frac{1}{2 \pi^2},
\end{align}
in the region $0 < \xi < \pi$, which, assuming $f = f(\xi)$ for
symmetry, becomes
\begin{align}
  \frac{1}{\sin^2 \xi} \frac{\partial}{\partial \xi} \left( \sin^2 \xi
  \frac{\partial f}{\partial \xi}\right) = - \frac{1}{2\pi^2}
  \Rightarrow f(\xi) = \frac{\xi \cot \xi}{4 \pi^2} + A \cot \xi + B,
\end{align}
where $A$, $B$ are constants of integration. For the potential of
$\delta_N$ we require that the coefficient of $\xi^{-1}$ in the series
expansion of its potential around $\xi = 0$ is
$-|S^{3-1}|^{-1} = -1/4\pi$. It follows that
\begin{align}
  \psi = \frac{1}{4\pi^2} \left((\xi - \pi) \cot \xi + \frac{1}{2}\right),
\end{align}
where the additive constant is chosen so that \eqref{psivol} holds.
As it is easily seen that $\psi$ is bounded from above in $\xi$, it
follows from Example~\ref{ex:psiuppersemicontinuous} that
$\mathrm{Bal}(\sigma,0)$ exists. Precisely as in \eqref{balNvol} the
resulting balayage form is given by
\begin{align}
  \mathrm{Bal}(\delta_N - \alpha \, \mathrm{vol}^3, 0) = - \alpha \, \mathrm{vol}^3|_K,
\end{align}
with $K = S^3 \setminus \Omega$ and $\Omega$ is a ball around $N$ with
boundary ${\xi = \xi_0}$ for some constant $\xi_0$, determined
explicitly by the equation
\begin{align}
  \label{3-sphere-xi0-eq}
  \pi \alpha (2 \xi_0 - \sin (2\xi_0)) = 1.
\end{align}
Note that the function
$h(\xi) := \pi \alpha (2 \xi - \sin 2 \xi) - 1$ is continuous and
monotonically increasing on $[0, \pi]$, satisfies $h(0) = -1$, and
that $h(\pi) \geq 0$ holds if and only if $\alpha \geq 1/2\pi^2$,
ensuring that \eqref{3-sphere-xi0-eq} has a unique solution.

The above example can rather easily be generalized further to even
higher dimensions. For the $n$-sphere $M = S^n \subset \R^{n+1}$ we
can use the hyperspherical coordinates
$(\phi_1, \phi_2, \ldots, \phi_n)$ defined by $x_1 = \cos \phi_1$,
$x_j = \left( \prod_{k=1}^{j-1} \sin \phi_k \right) \cos \phi_j$ for
all $j = 2, 3, \ldots, n$, and $x_{n+1} = \prod_{k=1}^n \sin \phi_j$,
where $0 \leq \phi_1, \ldots, \phi_{n-1} \leq \pi$ and
$0 \leq \phi_n \leq 2\pi$. With $N = (1,0,0,\ldots,0)$ and
$\sigma = \delta_N - \alpha \, \mathrm{vol}^n$, using
$\alpha \geq 1/\mathrm{vol}^n(M)$ to ensure \eqref{sigmanegative},
one can find the potential for $\sigma$ by finding solutions
$\psi = \psi(\phi_1)$ to
\begin{align}
  \frac{1}{\sin^{n-1} (\phi_1)} \frac{\partial}{\partial \phi_1} \left(
  \sin^{n-1} (\phi_1) \frac{\partial \psi}{\partial \phi_1} \right) = -
  \frac{1}{\mathrm{vol}^n(M)}.
\end{align}
It turns out that the solutions are of the form
\begin{align}
  \psi(\phi_1) =& \frac{1}{\mathrm{vol}^n(M)} \int
  \frac{\cos(\phi_1)}{\sin^{n-1}(\phi_1)} \,_2F_1
  \left(\frac{1}{2},1-\frac{n}{2};\frac{3}{2}; \cos^2(\phi_1) \right)
  d \phi_1 \nonumber \\
  &+ A \cos(\phi_1) \,_2F_1 \left(\frac{1}{2}, \frac{n}{2};
  \frac{3}{2}; \cos^2(\phi_1) \right),
\end{align}
where ${}_2F_1$ is the Gaussian hypergeometric function,
\begin{align}
  {}_2F_1(a,b;c;z) = \sum_{n=0}^\infty \frac{(a)_n (b)_n}{(c)_n} \frac{z^n}{n!}
\end{align}
with $(k)_n = k (k + 1) \ldots (k + n - 1)$, $(k)_0 = 1$ the Pochhammer symbol, and $A$ is a constant of integration. The
resulting balayage $n$-form exists, and again has the form
\begin{align}
  \mathrm{Bal}(\delta_N - \alpha \, \mathrm{vol}^n, 0) = - \alpha \,
  \mathrm{vol}^n|_{S^n \setminus \Omega},
\end{align}
where $\Omega$ is the open ball around $N$ with boundary
$\{\phi_1 = \Phi\}$ for some constant $\Phi$ determined uniquely by
the equation
\begin{align}
  \alpha \, \mathrm{vol}^{n-1}(S^{n-1}) \int_0^\Phi \sin^{n-1}(\phi_1)\,d \phi_1 
  = 1.
\end{align}


\section{Examples of doubling technique}\label{sec:doubling}

\subsection{The double of a ball}\label{sec:double of ball}

In order to illustrate some matters in Section~\ref{sec:nonclosed}, let $M=B_R=B(0,R)$ be the open ball in $\R^n$ with radius $R$. 
We first construct the compact manifold $\hat{M}=M\cup\partial M\cup \tilde{M}$, the double of $M$. 
In $M$ we use the ordinary Euclidean metric
$$
ds^2=dx_1^2+\dots +dx_n^2, 
$$
and with $\tilde{x}_1, \dots, \tilde{x}_n$ the corresponding coordinates on $\tilde{M}$, the metric there will be 
$$
d\tilde{s}^2=d\tilde{x}_1^2+\dots +d\tilde{x}_n^2. 
$$
The general recipe for gluing these involve first choosing local coordinates, $y_1, \dots, y_n$, say, near the boundary
so that $\partial M$ corresponds to $y_n=0$ and $M$ to parts of the upper half space. For $\tilde{M}$ one does the same,
and then flips $\tilde{y}_n\mapsto -\tilde{y}_n$ before gluing.

Suitable coordinates in the ball case are spherical coordinates $r,\theta, \varphi, \dots$, which we in general may write as 
$(r,\omega)$, where $r>0$, $\omega\in S^{n-1}$. Then the Euclidean metric becomes (symbolically)
$$
ds^2=dr^2+ r^2 \, d\omega^2,
$$
for example $ds^2=dr^2+r^2 (d\theta^2+ \sin^2\theta \,d\varphi^2)$ in  dimension $n=3$.
For the local coordinates $y_1, \dots,y_n$ above we can choose $y_1,\dots,y_{n-1}$ to be various polar angles
(\eg $y_1=\theta$, $y_2=\varphi$ when $n=3$) and $y_n =R-r$. This renders the metric on the form
$$
ds^2= dy_n^2+(R-y_n)^2 \, d\omega(y_1,\dots,y_{n-1})^2 \quad (y_n\geq 0).
$$
The same expression is valid for $\tilde{M}$, with tilde on all coordinates, and then one allows $y_n$ to take negative values by
setting $y_n = -\tilde{y}_n\leq 0$. The resulting metric on (part of) $\hat{M}$ now becomes,  for $y_n$ in a full neighbourhood of $y_n=0$,
$$
ds^2= dy_n^2+ (R-|y_n|)^2 \, d\omega(y_1,\dots,y_{n-1})^2.
$$
Here one sees clearly that the metric tensor only becomes Lipschitz continuous, and this is the best one can achieve in general (see \cite{Friedrichs-1955},
\cite{Conner-1956}). In fact, the above choice of coordinates is already optimal in the sense  that, in terms of a general expression $ds^2=\sum g_{ij}dy_i\otimes dy_j$,
we have $g_{in}=0$ for all $i\ne n$. In cases when such mixed terms are present the coefficients $g_{ij}$ need not even be continuous
(this occurs with the coordinates for the ball chosen as in the example after Definition~1.1.1 in \cite{Schwarz-1995}).

In the somewhat trivial case of dimension $n=1$, the metric tensor actually becomes smooth (\eg in the above example there are
no polar angles), but already in dimension $n=2$ one has to treat Lipschitz continuous coefficients. The two dimensional case is
on the other hand favourable in the sense that the {\it conformal}  structure of the double remains smooth (for arbitrary $M$),  
which makes $\hat{M}$ become a true Riemann surface, the {\it Schottky double} of $M$.  

As an alternative to the coordinates $(\tilde{x}_1,\dots, \tilde{x}_n)$ or $(y_1,\dots, y_n)$ on $\tilde{M}$ one can use the original Cartesian coordinates
$(x_1,\dots, x_n)$ in the region outside $B_R$. Such a point $x=(x_1,\dots, x_n)$, hence with $|x|>R$,
 is then identified with a point  $\tilde{x}=(\tilde{x}_1,\dots, \tilde{x}_n)$ in $\tilde{M}$  (\ie  $|\tilde{x}|<R$) via the reflection map
\begin{equation}\label{transition}
(x_1,\dots, x_n)\mapsto(\tilde{x}_1,\dots, \tilde{x}_n)= \frac{R^2}{|x|^2} (x_1,\dots, x_n).
\end{equation}
These coordinates turn out to be quite useful and intuitive, for example the full Euclidean space $\R^n$ then represents all of $\hat{M}$ except for the ``north pole'',
$\tilde{0}\in \tilde{B}_R$.
Straight-forward computations give that, in these Cartesian coordinates,  the Riemannian metric on $\hat{M}\setminus\{\tilde{0}\}$ takes the form
\begin{equation}\label{doublemetric}
ds^2=
\begin{cases}
d{x}_1^2+\dots +d{x}_n^2 \quad & 0\leq |x|\leq R,\\
\displaystyle \frac{R^4}{|x|^4}(d{x}_1^2+\dots +d{x}_n^2) \quad &R<|x|<\infty.
\end{cases}
\end{equation}
Again we see that the metric is only Lipschitz continuous across $\partial M$. We also see that the metric is very small at infinity, in fact so small that the
one point compactification of $\R^n$ becomes a smooth manifold at infinity.

One may compare the above with the spherical metric on $\R^n$, \ie with the metric obtained from the standard metric on $S^n\subset\R^{n+1}$ by
stereographic projection from the north pole in  $S^n$ to $\R^n\cong \{x_{n+1}=0\}\subset\R^{n+1}$.
This is given  by
\begin{equation}\label{sphericalmetric}
ds_{\rm spherical}^2=\frac{4(d{x}_1^2+\dots +d{x}_n^2)}{(1+|x|^2)^2},
\end{equation}
hence is equally small at infinity, which certainly represents a smooth point of $S^{n}$.
For the spherical metric, the curvature is uniformly spread out over the manifold, while for the double of a ball the curvature is concentrated as a singular
distribution on $\partial M$. Indeed, the curvature tensor is an expression in the second order derivatives of the components of the metric tensor,
and these being just Lipschitz continuous means that the components of the curvature tensor will consist of measures sitting on $\partial M$. On the other hand $M$ and
$\tilde{M}$ are completely flat, but for topological reasons the manifold has to be curved somewhere. \vspace{.2cm}

As an example of function theory on the double we have
\begin{example}\label{ex:green}

If $u$ is a harmonic function, with some singularities, on a manifold with boundary, $M\cup \partial M$, and $u=0$
on $\partial M$, then $u$ can be extended to an odd function to the double $\hat{M}$ by setting $u(\tilde{x})=-u(x)$ at the 
point $\tilde{x}\in \tilde {M}$ opposite to $x\in M$, and with this extension $u$ remains harmonic in $\hat{M}$, except for its original and reflected singularities. 
A main example is the Green's function $g_M(\cdot,a)$ for $M$ with pole at $a\in M$,
which extends in this way to be harmonic on the double, with a corresponding counter-pole at the opposite point $\tilde{a}\in\tilde{M}$. 
This results in the following formula, which relates the Dirichlet Green's function to the Green's kernel (\ref{Gab}) for the double:
\begin{align*}
  g_M(x,a) &= \frac{1}{2}\mathcal{E}(\delta_x-\delta_{\tilde{x}},\delta_a-\delta_{\tilde{a}}) \\[.2cm]
           &= \frac{1}{2}[G(x,a)-G(x,\tilde{a})-G(\tilde{x},a)+G(\tilde{x},\tilde{a})].
\end{align*}
To prove the formula one just need to act by  $-d*d $ (with respect to $x$) on the right member, to see that it becomes $\delta_a$, and to check that the
right member vanishes when $x\in \partial M$.

\end{example}


\subsection{On equilibrium distributions}\label{sec:equilibrium}

The {\it classical equilibrium distribution} of a compact set $K\subset \R^n$ is the probability
measure $\mu$ on $K$ that minimizes the (unweighted) energy $\int U^\mu \, d\mu$ among all probability
measures on $K$. The corresponding equilibrium  potential $U^\mu$ is constant (quasi everywhere) on $K$
and behaves at infinity as $U^\mu (x)= \mathcal{O} (|x|^{2-n})$ ($n\geq 3$), $U^\mu(x)=-\frac{1}{2\pi}\log |x|+\mathcal{O}(1)$ ($n=2$).
 
If $\gamma$ is the constant value of $U^\mu$ on $K$, then the function 
$$
V(x)=\gamma -U^\mu(x)
$$
coincides, in the case $n=2$, with the Green's function $g_\Omega(x,\infty)$ of $\Omega=(\R^2\cup\{\infty\})\setminus K$.
Here $\R^2\cup\{\infty\}=S^2$ is the Riemann sphere with its usual conformal structure, and in two dimensions one 
need not specify the metric in order to define harmonic functions, like the Green's function.
It follows, as is well-known, that
the equilibrium distribution $\mu$ of $K$ coincides with the harmonic measure, $\nu=-d*g_\Omega(\cdot,\infty)$,  of the complementary domain
with respect to infinity (equivalently, with classical balayage of the point mass $\delta_\infty$ to $\partial\Omega$).

If one wishes something similar in higher dimension then one must first of all compactify $\R^n$ when $n\geq 3$, and then choose a Riemannian metric
on it. Compactification to a sphere $\R^n \cup \{\infty\}=S^{n}$ with its spherical metric, which in $\R^n$ becomes (\ref{sphericalmetric}), does not work, because
$V$ is simply not harmonic with respect to this metric. 

Another possibility is to choose a large ball $B_R$, which contains $K$, and then compactify by 
completing $B_R$ to the double $\hat{B}_R$. This has the advantage that the original metric in $B_R$ is kept unchanged.  
Again, this works well in two dimensions. Indeed, the Euclidean, the spherical and the metric of the double are all conformally equivalent,
hence the choice does not matter when extending harmonic functions.
However, in higher dimensions it does not work perfectly well. One could say that the difference compared to the spherical metric is that all curvature now
is concentrated to $\partial B_R$, and when trying to extend $V|_{B_R}$ harmonically to $\hat{B}_R$, with a necessary pole at the  
``point of infinity'' $\tilde{0}$, one gets a distributional contribution to $d*dV$ on $\partial B_R$. 

It is in fact easy to check this statement, because the only way to make such a continuation of $V$ is
to fold the original $V|_{\R^n\setminus {B}_R}$ over $\partial{B}_R$ by means of the Kelvin transform \cite{Helms-1969, Doob-1984, Armitage-Gardiner-2001}  
and then possibly add a harmonic function in $\tilde{B}_R\setminus \{\tilde{0}\}$ which vanishes on $\partial B_R$. The latter function must be of the form $A(|\tilde{x}|^{2-n}-R^{2-n})$ 
for some $A$, but no matter how one chooses $A$ there will be a jump of the normal derivative of $V$ on $\partial B_R$,
\ie there will be a distributional contribution to $d*dV$ on $\partial B_R$. (We omit the computational details.)

For {\it weighted equilibrium distributions}, the choice of Riemannian metric  matters also in two dimensions, because the volume form is involved.
We recall from Section~\ref{sec:weighted-eq-confs} and \cite{Roos-2015} the connection between partial balayage and weighted
equilibrium distributions: if $Q$ is a potential, bounded from below, on a compact manifold $M$, and we let, for any $\textsf{t}>0$,
\begin{equation}\label{sigmaQ}
\sigma_{\textsf{t}}= -d*dQ - \textsf{t}\,{\rm vol}^n,
\end{equation}
then ${\rm Bal}(\sigma_{\mathsf{t}},0)$ exists and relates to the $\mathsf{t}$-equilibrium measure $\mu_{Q,\mathsf{t}}$ for $Q$ by
$$
\mu_{Q,\mathsf{t}}+{\rm Bal}(\sigma_{\mathsf{t}},0)=0.
$$ 
We have $\mu_{Q,\mathsf{t}} \geq 0$, $\mathsf{m}(\mu_{Q,\mathsf{t}}) = \mathsf{t}$, and
\begin{align*}
  &Q+G^{\mu_{Q,\mathsf{t}}}\geq c_{\rm Robin} \quad {\rm in}\,\, M,\\
  &Q+G^{\mu_{Q,\mathsf{t}}}= c_{\rm Robin} \quad {\rm on}\,\, {\rm supp\,}\mu_{Q,\mathsf{t}}
\end{align*}
for some constant $c_{\rm Robin}$. With $u=Q+G^{\mu_{Q,\mathsf{t}}}-c_{\rm Robin}$ this system is the same as (\ref{umuomega}),
together with the fact that $\mu_{Q,\mathsf{t}}$ and $-\sigma_{\mathsf{t}}$ have the same normalized net mass (namely $\mathsf{t}$).

Assume now that $Q$ is such that
\begin{equation}\label{ddQ}
d*dQ = \mathsf{s}\,{\rm vol}^n -\tau,
\end{equation}
where $\tau \geq 0$ (and hence $\mathsf{s}\geq 0$).  This form of $Q$ is common in applications, where then $\tau$ may consist of
finitely many point masses and the volume term means, in Euclidean settings, that $Q(x)$ contains a term $\alpha |x|^2$ with
$\alpha>0$.  In the case (\ref{sigmaQ}), (\ref{ddQ}) in general, $(\sigma_\mathsf{t})_-$ is the positive constant
$\mathsf{s}+\textsf{t}$ times ${\rm vol}^n$, hence the structure theorem (Theorem~\ref{thm:structure}) gives that
$$
\mu_{Q,\mathsf{t}}=(\mathsf{s}+\textsf{t}) \, {\rm vol}^n |_{{\rm supp\,}\mu_{Q,\mathsf{t}}}.
$$ 
We give below some more specific examples.

\begin{example}\label{ex:weightedeq_sphere}
Let $M=S^2=\C\cup\{\infty\}$ with the metric
$$
ds^2 = d\theta^2+ \sin^2\theta \,d\varphi^2= \frac{4|dz|^2}{(1+|z|^2)^2}.
$$
The Green's kernel (see (\ref{Gab})) is in complex coordinates given by
\begin{equation}\label{Gabsphere}
G(a,b)=G^{\delta_a}(b)=-\frac{1}{4\pi} \left( \log\frac{|a-b|^2}{(1+|a|^2)(1+|b|^2)}+1 \right )\quad (a,b\in\C).
\end{equation}
We then choose, for some $\alpha, \beta>0$, $a\in \C$,
$$
Q(z)=\alpha G(z,\infty)+ \beta G(z,a)
$$
$$
=-\frac{1}{4\pi} \left[ \,\alpha \log\frac{1}{1+|z|^2}+\beta \log\frac{|z-a|^2}{1+|z|^2}+\alpha +\beta-\beta \log(1+|a|^2) \right].
$$
This $Q$ is an attempt to imitate, in the spherical case, the Euclidean version
\begin{equation}\label{QEuclidean}
Q_{\rm Euclidean}(z)= \alpha|z|^2+\beta \log \frac{1}{|z-a|},
\end{equation}
used in \cite{Balogh-Harnad-2009, Roos-2015}, for example. (The names of the constants do not match perfectly.)

The above choice of $Q$ for $S^2$ gives
\begin{align*}
  \sigma_{\mathsf{t}} &= -d*dQ- \mathsf{t} \, {\rm vol}^2 =\alpha \delta_ \infty+\beta\delta_a - \left(\frac{\alpha+\beta}{4\pi}
                        + \mathsf{t} \right) {\rm vol}^2 \\
  &= \alpha \delta_ \infty+\beta\delta_a- \frac{(\alpha + \beta + 4\pi\mathsf{t})}{\pi(1+|z|^2)^2} \, dxdy,
\end{align*}
and $\mu_{Q,\mathsf{t}}=-{\rm Bal}(\sigma_{\mathsf{t}},0)$. On adding the constant multiple of the volume form which appears above, and using also the structure 
formula (\ref{purestructure}), one gets
$$
{\rm Bal}(\alpha \delta_ \infty+\beta\delta_a, \left(\frac{\alpha+\beta}{4\pi} + \mathsf{t} \right) {\rm vol}^2)= \left(
  \frac{\alpha+\beta}{4\pi} + \mathsf{t} \right) {\rm vol}^2|_{\Omega},
$$
where $\Omega$ can be viewed as two spherical disks, with centres $\infty$ and $a$ respectively, ``smashed'' together. By Theorem~\ref{thm:equivalentballs}
these disks are geodesic as well as harmonic balls with these centres. As a subset of $\C$, the spherical disk with centre $a$ will also be a Euclidean disk,
or possibly a half-plane, but as such a disk the centre will not be $a$. The spherical disk with centre $\infty$ will of course be the complement of an ordinary Euclidean disk.

For $\mu_{Q,\mathsf{t}}$ we get, from the above,
$$
\mu_{Q,\mathsf{t}}= \left( \frac{\alpha+\beta}{4\pi} + \mathsf{t} \right) {\rm vol}^2|_{S^2\setminus\Omega}.
$$
The point with this approach is that it gives a good intuition for what the support of $\mu_{Q,\mathsf{t}}$ looks like, namely
that it is the complement of some kind of quadrature domain \cite{Sakai-1982, Shapiro-1992, Gustafsson-Shapiro-2005}, in the present case a two point quadrature domain.  In general, good information is available on
topology, geometry and regularity of boundaries of quadrature domains.

\end{example}

\begin{example} 
We try to repeat the previous example in the case $M=\hat{B}_R$, \ie that $M$ is the double of a ball, in $n\geq 2$ dimensions, and with 
$$
Q(x)=\alpha G(x, \tilde{0}) + \beta G(x, a).
$$
Here $\tilde{0}\in \tilde{B}_R$ plays the role of point of infinity and $a\in B_R$. 

Clearly, $G(x, \tilde{0})$ will only depend on the radius $r=|x|$, and it shall satisfy
$$
-d*d G(\cdot,\tilde{0}) =\delta_{\tilde{0}}-\textsf{m}(\delta_{\tilde{0}}) \, {\rm vol}^n,
$$
where 
$$
\textsf{m}(\delta_{\tilde{0}}) =\frac{1}{{\rm vol}^{n}(\hat{B}_R)}= \frac{\Gamma(n/2+1)}{2 \pi^{n/2} R^n} 
$$
In dimension $n\geq 3$ this gives
\begin{align*}
G(x, \tilde{0}) &= a_n |x|^2 + \frac{b_n}{R^{n-2}}+c_n && x\in B_R,\\
G(\tilde{x}, \tilde{0}) &= a_n |\tilde{x}|^2 +\frac{b_n}{|\tilde{x}|^{n-2}} +c_n &&\tilde{x}\in \tilde{B}_R,
\end{align*} 
where $a_n=\frac{\Gamma(n/2+1)}{4n \pi^{n/2} R^n}$, $b_n=\frac{\Gamma(n/2 + 1)}{n (n-2) \pi^{n/2}}$ and with the constant $c_n$ adapted to achieve the normalization (\ref{Gvol}).
In fact, the above  expression are of the right type,
and the coefficients are chosen so that the two functions take the same value on $\partial B_R$ and their normal derivatives there
have equal absolute values but are of opposite signs. All this makes the combined function continuously differentiable across $\partial B_R$,
meaning that the Laplacian of it will have no distributional contributions on $\partial B_R$.

On modelling the above function in $\R^n\cup \{\infty\}$, with $\tilde{B}_R$ represented by $(\R^n\cup\{\infty\})\setminus\bar{B}_R$, 
and with the metric (\ref{doublemetric}), it becomes, in dimension $n \geq 3$,
$$
G(x,\infty)= 
\begin{cases}
a_n |x|^2 + {b_n}{R^{2-n}}+c_n, &|x|\leq R,\\[.2cm]
a_n {R^4}{|{x}|^{-2}} +{b_n}{R^{2(2-n)}}|x|^{n-2}+c_n, & |x|>R,
\end{cases}
$$ 
where now $\infty=\tilde{0}$ really becomes a point of infinity.

In dimension $n=2$ we get instead, denoting the variable by $z$,  
\begin{align*}
G(z, \tilde{0}) &= a_2 |z|^2 - b_2 \log R+c_2 && z\in B_R,\\
G(\tilde{z}, \tilde{0}) &= a_2 |\tilde{z}|^2 -{b_2}\log {|\tilde{z}|} +c_2 &&\tilde{z}\in \tilde{B}_R,
\end{align*} 
with $a_2=1/(8\pi R^2)$, $b_2 =1/2\pi$, hence in $\R^2\cup\{\infty\}=\C\cup\{\infty\}$,
$$
G(z,\infty)= 
\begin{cases}
a_2 |z|^2 - {b_2}\log R+c_2, &|z|< R,\\[.2cm]
a_2 {R^4}{|{z}|^{-2}} +{b_2}\log |z|-2b_2 \log R+c_2,& |z|>R.
\end{cases}
$$ 

It is possible to compute also the more general two point Green's function $G(z,a)$, with $a\in B_R$ say, in two dimensions. 
In fact, one need only to subtract $b_2\log |z-a|$ from the last expression for $G(z,\infty)$, this will add a pole of the right strength at $z=a$
and simultaneously kill the pole at $z=\infty$. Recall that $\log|z-a|$ is harmonic in all $\R^2\setminus\{a\}$ with the metric (\ref{doublemetric}), despite
this metric changing behaviour on $|z|=R$. The resulting Green's potential is, with an  $a$-dependent constant $d_2 = d_2(a)$,
$$
G(z,a)=
\begin{cases}
a_2{|z|^2}-b_2\log|z-a|+d_2,\quad &|z|<R,\\[.2cm]
\displaystyle a_2 {R^4}{|{z}|^{-2}} -b_2\log |z-a| +b_2\log \frac{|z|}{R}+d_2, \quad &|z|>R.
\end{cases}
$$
One can directly verify that (\ref{ddG}) holds (with $\omega=\delta_a$).

The above is exactly what we wanted to achieve, namely that the Green's potentials
$$
G^{\alpha\delta_{\infty}+\beta \delta_a}(z)= \alpha G(z, \infty)+\beta G(z,a)
$$ 
that we have constructed on the compact Riemannian manifold $\hat{B}_R$ represent, within $B_R$, exactly those Euclidean external potentials
which appear in (\ref{QEuclidean}).
One may view all this  as a way of extending, and regularizing at infinity, a basic background potential (\ref{QEuclidean})
in a similar way as was done in \cite{Roos-2015}. We emphasise, however, that we could arrange these matters perfectly well only in two dimensions.

\end{example}


\section{Examples of partial balayage in Euclidean balls}\label{sec:ball}

\subsection{Partial balayage with Dirichlet boundary conditions}\label{sec:Dirichlet}

We shall discuss in detail a specific balayage problem in a ball, with the aim of illustrating some subtleties and dependence on
boundary conditions in the theory of partial balayage on open manifolds.

Let $\eta$ denote hypersurface measure ${\rm vol}^{n-1}$ on the unit sphere $S^{n-1}=\partial B(0,1)$, and let $\rho$, $R$ be radii satisfying $0<\rho<1<R<\infty$. 
Our basic manifold will be $M=B(0,R)\subset \R^n$. Since we are in Euclidean space, ${\rm vol}^n$ is ordinary Lebesgue measure, and for simplicity we
suppress it from notation, \ie we represent, for example, absolutely continuous $n$-forms by their coefficients with respect to ${\rm vol}^n$. We shall study partial balayage 
${\rm Bal}(\sigma,0)$ with Dirichlet boundary conditions (see Section~\ref{sec:nonclosed}) of
$$
\sigma= t\eta -\chi_{B(0,\rho)}
$$ 
for various $t>0$, and eventually with $R\to\infty$. We recall that 
$$
{\rm Bal}(\sigma,0)=\sigma+d*du,
$$
where $u\in W^{1,2}_0(M)$ is the unique solution of the complementarity system (\ref{complementaritysystem}).

As all data are rotationally symmetric we have effectively a one dimensional problem, with the radius $r=|x|$
as independent variable. For general reasons, namely the structure formulas in Theorem~\ref{thm:structure}, the 
balayage will be of the form
\begin{equation}\label{balsigmachi}
{\rm Bal}(\sigma,0)=-\chi_{{B(0,s)}}
\end{equation}
for some radius $0\leq s\leq \rho$. This $s$ will depend on $R$, $t$ and $\rho$, but $\rho$ will be kept fixed all the time.
For $t$ we shall put an upper bound which guarantees that $s>0$ for all values of $R$, even in the limit $R\to\infty$. 
Such an upper bound is obtained from  (\ref{sigmastrictlynegative}) which, while not being a necessary assumption in the case
of Dirichlet boundary data, is sufficient for ensuring  the existence of a free boundary (represented in this example by $s>0$).
In the present notations (\ref{sigmastrictlynegative}) becomes
\begin{equation}\label{boundsont}
0<t<\frac{{\rm vol}^n(B(0,\rho))}{{\rm vol}^{n-1}(\partial B(0,1))} = \frac{\rho^n}{n},
\end{equation}
henceforth assumed. 

The above data means that the potential $u=u(r)$, which we extend by zero for $r>R$,  
shall be a continuous function in $0<r<\infty$ and satisfy the following additional requirements.
\begin{align*}\label{usystem}
  u(r)=0\qquad &0<r< s,\\
  [u'(r)]_{{\rm jump}}=0\qquad &r=s,\\
  u''(r)+\frac{n-1}{r}u'(r)=1\qquad &s<r<\rho,\\
  [u'(r)]_{{\rm jump}}=0\qquad &r=\rho,\\
  u''(r)+\frac{n-1}{r}u'(r)=0\qquad &\rho<r<1,\\
  [u'(r)]_{{\rm jump}}={-t}\qquad &r=1,\\
  u''(r)+\frac{n-1}{r}u'(r)=0\qquad &1<r<R,\\
  u(r)=0 \qquad & R\leq r<\infty.
\end{align*}
 Jumps are generally defined by
$$
[u'(r)]_{{\rm jump}}=\lim_{\varepsilon\searrow 0} \left( u'(r+\varepsilon)-u'(r-\varepsilon) \right).
$$
At $r=s$, representing the free boundary, as well as at $r=\rho$,  
also $u'$ (in addition to $u$) has to be continuous, while at $r=R$ this is not required.
Instead, the jump of $u'(r)$ at $r=R$ 
represents excessive mass moved  to the boundary of the manifold.

In the above system, $s$ is not known in advance, and for only one value of $s$ there exists a solution
$u$ of the system, a solution which then is unique. Each of the three differential equations is easily solvable, indeed the solutions
will be of the form
$$
u(r)=
\begin{cases}
Ar^{2-n}+B\quad &(n\ne 2),\\[.2cm]
A\log r + B\quad &(n=2)
\end{cases}
$$
in the two homogeneous cases, with an additional term $\frac{r^2}{2n}$ for the inhomogeneous case.
These general solutions then contain three  sets  of constants $\{A,B\}$ (as there are three differential equations),
and in addition we have the unknown $s$. So there are seven unknowns. The equations we have for these unknowns
are those which express continuity of $u$ at $r=s$, $r=\rho$, $r=1$, $r=R$ and the prescribed jumps of $u'$ at
$r=s$, $r=\rho$, $r=1$. So there are also seven equations. These are linear in the constants $\{A,B\}$, but nonlinear in $s$.

To give some details, let the constants be $\{A_j,B_j\}$, where $j=1,2,3$ for the three differential equations in the order they are written above.
In dimension $n\ne 2$ one may first express these constants in terms of $s$ as
\begin{align*}
A_1&=\frac{s^n}{n(n-2)},\\
B_1&=-\frac{s^2}{2(n-2)},\\
A_2&=\frac{s^n-\rho^n}{n(n-2)},\\
B_2&=- \frac{s^2-\rho^2}{2(n-2)} ,\\
A_3&=\frac{t}{n-2}+\frac{s^n-\rho^n}{n(n-2)},\\
B_3&=\frac{1}{R^{n-2}} \left( \frac{t}{n-2}+\frac{s^n-\rho^n}{n(n-2)} \right) ,
\end{align*}
which uses all requirements above except the continuity of $u(r)$ at $r=1$. That requirement gives
$$
A_2+B_2=A_3+B_3,
$$
which then becomes an equation for $s$, namely
\begin{equation}\label{Rtsrho}
 \left( 1+\frac{1}{R^{n-2}} \right) \left( \frac{t}{n-2} + \frac{s^n-\rho^n}{n(n-2)} \right) + \frac{s^n-\rho^n}{2n}=0.
\end{equation}

The main question is how much of the mass of $\sigma_+= t\eta$ goes to the outer
boundary $\partial B(0,R)$, in particular in the limit $R\to \infty$.  The density of this mass is, for finite $R$,
$$
-u'(R)= \frac{(n-2)A_3}{R^{n-1}},
$$
hence the total mass is
\begin{equation}\label{totalmass}
-\int_{\partial B(0,R)} *du =(n-2)|S^{n-1}| A_3.
\end{equation}
To pass to the limit $R\to\infty$, recall that  $\rho$ and $t$ are fixed and $s=s(R)$ stays in the interval $0<s(R)<\rho$. 
It then follows from (\ref{Rtsrho})  that $s(\infty)=\lim_{R\to \infty} s(R)$ is given by
$$
s(\infty)^n=\rho^n-2t,
$$
provided $n\geq 3$. When $n=1$, the first factor in (\ref{Rtsrho}) behaves in a different way, and one ends up with
$$
s(\infty)=\rho-t.
$$

Inserting the above expressions into previous equations gives $A_3=t/n$ when $n\geq 3$. Hence the amount of  mass disappearing at infinity is 
$$
-\lim_{R\to\infty}\int_{\partial B(0,R)} *du =\frac{(n-2)|S^{n-1}|}{n}t.
$$
This makes up the fraction ${(n-2)}/{n}$ of the total amount $\int \sigma_+=t|S^{n-1}|$ available for balayage. 
When $n=1$ one gets instead $A_3=0$ (in the limit $R\to\infty$), so no mass  disappears at infinity in this case.

When $n=2$ one gets slightly different equations, which result in
\begin{align*}
A_1&=-\frac{s^2}{2},\\
B_1&=\frac{s^2}{2}\log s-\frac{s^2}{4},\\
A_2&=\frac{\rho^2-s^2}{2},\\
B_2&=\frac{s^2}{2}\log s-\frac{\rho^2}{2}\log \rho+\frac{\rho^2-s^2}{4},\\
A_3&=\frac{\rho^2-s^2}{2}-t,\\
B_3&= \left( t-\frac{\rho^2-s^2}{2} \right) \log R.
\end{align*}
Here we have used all equations in the system of jump conditions except the one which expresses continuity
of $u(r)$ at $r=1$. That equation gives $B_2=B_3$. 
Since $B_2$ is a  bounded function of $s$ and $\rho$ (with $0<s<\rho<1$), so is $B_3$,
hence it follows that the first factor in $B_3$ has to go to zero  as $R\to\infty$. This gives
$$
s(\infty)^2=\rho^2-2t.
$$
Thus $A_3=0$ in the limit $R\to\infty$, which  gives
$$
-\lim_{R\to\infty}\int_{\partial B(0,R)} *du =0,
$$
\ie that no mass is lost in the limit.

As a summary of the above example, dimensions $n=1,2$ are special in the sense that net mass is preserved under the balayage process
$\sigma\mapsto {\rm Bal}(\sigma ,0)$ in $\R^n$, when this is treated as a limiting case of balayage with Dirichlet
boundary conditions in bounded domains, while in dimension $n\geq 3$, a fraction $(n-2)/n$ of the available mass  disappears at infinity.


\subsection{Other boundary conditions}\label{sec:Neumann}

In the beginning of Section~\ref{sec:nonclosed}, three other types of boundary conditions were mentioned, besides vanishing Dirichlet data,
namely relaxed and hydrodynamic Dirichlet data, and Neumann data. With any of these conditions no mass disappears, even in the case of finite $R$. 
The assumption (\ref{sigmanegative}) is now necessary, and the mass balance gives immediately that $s$ in (\ref{balsigmachi}) is given by 
$$
s^n =\rho^n-nt, 
$$
for arbitrary values of $R>1$ (and $n\geq 1$). 

The vanishing Dirichlet data $u(r)=0$ for $R\leq r<\infty$  will in all three cases
be replaced by vanishing Neumann data $u'(R)=0$. Then $u$ is naturally extended by $u(r)=u(R)$ for $R\leq r<\infty$.
The so obtained solution $u(r)$ will then not depend on $R$, and it will also give the solution of the balayage
problem in entire space $M=\R^n$.


\subsection{Excess mass in Dirichlet case}\label{sec:excess}

We here give upper bounds for how much mass in general is moved to the boundary
in case of partial balayage in a bounded domain $M\subset \R^n$ with Dirichlet boundary conditions.
Thus we consider
$$
{\rm Bal}(\sigma,0)=\sigma + d*du,
$$
where $u\in W^{1,2}_0(M)$ minimizes $\int_M du\wedge*du$ under the constraint $\sigma+d*du\leq 0$. The notation ${\rm Bal}(\sigma,0)$
stands only for the mass within $M$, and the excess mass is represented by (minus) the $(n-1)$-form $*du$ on $\partial M$.  
Taking this into account gives full mass balance:
$$
\int_M \sigma= \int_M {\rm Bal}(\sigma,0)- \int_{\partial M} *du.
$$

The main result in this section will be a confirmation in general of what we saw in the example in Section~\ref{sec:Dirichlet}, namely that 
in dimension one and two, the excess mass disappears in the limit as $M$ grows to $\R^n$. It is easy to see, using properties as in Remark~\ref{rem:properties},
that the total excess mass decreases whenever $M$ is enlarged, so it will be enough to discuss the case that $M$ is a ball, say  $M=B_R=B(0,R)$.   
 
\begin{theorem}\label{thm:excess}
Given any charge distribution $\sigma$ with compact support in $\R^n$, say ${\rm supp\,}\sigma\subset B_\rho$, and satisfying (\ref{sigmastrictlynegative}),
consider partial balayage of $\sigma$ with Dirichlet boundary conditions in balls $B_R$ with $R>\rho$.
Writing the result as 
$$
\nu_R={\rm Bal}_R(\sigma,0)=\sigma + d*du_R,
$$ 
and setting
\begin{equation}\label{qR}
q_R=-\int_{\partial B_R} *du_R,
\end{equation}
we have the estimates
\begin{align}\label{qRn}
q_R^2&\leq 
\frac{(n-2)|S^{n-1}|(R\rho)^{n-2}}{R^{n-2}-\rho^{n-2}} \,\mathcal{E}(\tilde{\nu}-\sigma) \quad &&{\rm when}\,\,n\ne 2,\\
q_R^2&\leq \frac{2\pi}{\log R-\log\rho}\,\mathcal{E}(\tilde{\nu}-\sigma) \quad &&{\rm when}\,\,n=2.
\end{align}
Here
\begin{equation}\label{nutilde}
\tilde{\nu}= \left( \frac{\int\sigma_+}{\int\sigma_-}-1 \right) \,\sigma_-,
\end{equation}
which is independent of $R$, has finite energy and satisfies $\tilde{\nu}\leq 0$, $\int\tilde{\nu}=\int\sigma$. 
It follows that $q_R\to  0$ as $R\to \infty$ when $n=1$ or $n=2$.
\end{theorem}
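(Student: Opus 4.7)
The plan is to reduce the bound to one application of Cauchy--Schwarz, with one factor a capacity (giving the prefactor in the theorem) and the other the Dirichlet energy of $u_R$, which is then controlled by the minimization property of partial balayage. The key auxiliary object is the capacitary potential $h\in W^{1,2}(\R^n)$ of the condenser $\overline{B_\rho}\subset B_R$: $h\equiv 1$ on $\overline{B_\rho}$, $h\equiv 0$ outside $B_R$, and $h$ is harmonic in the annulus $A=B_R\setminus \overline{B_\rho}$. The explicit radial form (a first-order polynomial in $r^{2-n}$ when $n\neq 2$, in $\log r$ when $n=2$) gives
\[
\int_{\R^n} dh\wedge *dh \;=\; \mathrm{cap}(\rho,R),
\]
which equals $\tfrac{(n-2)|S^{n-1}|(R\rho)^{n-2}}{R^{n-2}-\rho^{n-2}}$ for $n\neq 2$ and $\tfrac{2\pi}{\log R-\log\rho}$ for $n=2$, matching precisely the prefactors in the theorem.

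Next I would rewrite $q_R$ as the pairing $(du_R,dh)_1$. Since $\sigma$ and $\nu_R$ are both supported in $\overline{B_\rho}$, $u_R$ is harmonic on $A$; combined with $u_R|_{\partial B_R}=0$ and Stokes on $B_\rho$ applied to $d*du_R=\nu_R-\sigma$ (yielding $\int_{\partial B_\rho}*du_R=\int_{B_\rho}(\nu_R-\sigma)=-q_R$), one further application of Stokes on $A$ delivers
\[
\int_{\R^n}du_R\wedge *dh \;=\; \int_{A}du_R\wedge *dh \;=\; -\int_{\partial B_\rho}*du_R \;=\; q_R.
\]
Cauchy--Schwarz in $L^2(\R^n)_1$ then gives $q_R^{2}\le \mathrm{cap}(\rho,R)\cdot\int_{B_R}|du_R|^{2}$, reducing the theorem to the energy estimate $\int_{B_R}|du_R|^{2}\le \mathcal{E}(\tilde\nu-\sigma)$.

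For this energy estimate I use $\tilde\nu$ as a competitor in the Dirichlet variant of the minimization problem \eqref{minimizationu} described in Section~\ref{sec:nonclosed}. Let $\tilde u_R\in W^{1,2}_0(B_R)$ solve $-d*d\tilde u_R=\sigma-\tilde\nu$ (well-defined since $\tilde\nu-\sigma$ has finite energy, otherwise the bound is vacuous). Because $\sigma+d*d\tilde u_R=\tilde\nu\le 0$, $\tilde u_R$ is admissible, so minimality yields $\int_{B_R}|du_R|^{2}\le \int_{B_R}|d\tilde u_R|^{2}$. To upgrade to the Newtonian energy on $\R^n$, set $U=U^{\sigma-\tilde\nu}$; since $\sigma-\tilde\nu$ has zero net mass and compact support one has $\int_{\R^n}|dU|^{2}=\mathcal{E}(\tilde\nu-\sigma)$, and $U-\tilde u_R$ is harmonic on $B_R$. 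Harmonic functions being Dirichlet-orthogonal to $W^{1,2}_0(B_R)$,
\[
\int_{B_R}|dU|^{2}=\int_{B_R}|d\tilde u_R|^{2}+\int_{B_R}|d(U-\tilde u_R)|^{2}\;\ge\; \int_{B_R}|d\tilde u_R|^{2},
\]
and the theorem follows on chaining the three inequalities; $q_R\to 0$ for $n=1,2$ is then immediate from $\mathrm{cap}(\rho,R)\to 0$.

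The most delicate step will be the Stokes computation in the second paragraph when $\sigma$ carries point masses inside $B_\rho$: $u_R$ is then singular inside the ball, but the identity above only uses $u_R$ on $A$ and on $\partial B_\rho$, where it is smooth, while its behaviour inside $B_\rho$ enters only through the distributional current $d*du_R=\nu_R-\sigma\in W^{-1,2}$ paired against the constant $h|_{B_\rho}=1$, so Stokes applies cleanly.
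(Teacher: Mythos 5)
Your proof is correct and is essentially the paper's own argument in dual form: where you pair $du_R$ against the equilibrium potential $h$ of the condenser $(\overline{B_\rho},B_R)$ and apply Cauchy--Schwarz, the paper introduces the constrained minimizer $U_R$ (the conductor potential scaled to flux $q_R$), computes $\int_{B_R}dU_R\wedge *dU_R = q_R^2/\mathrm{cap}(\rho,R)$ explicitly, and chains $q_R^2/\mathrm{cap}(\rho,R)\leq \mathcal{E}(\nu_R-\sigma)\leq \mathcal{E}(\tilde\nu-\sigma)$ with the same competitor $\tilde\nu$. The only substantive refinement on your side is the harmonic-orthogonality step comparing the Dirichlet--Green energy of $\tilde\nu-\sigma$ in $B_R$ with its Newtonian energy in $\R^n$, which makes explicit a point the paper passes over when it bounds $\mathcal{E}(\nu_R-\sigma)$ by the $R$-independent quantity $\mathcal{E}(\tilde\nu-\sigma)$.
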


\begin{proof}
By general  properties of partial balayage, for example the structure formulas in Theorem~\ref{thm:structure}, 
the function $u_R\in W^{1,2}_0(B_R)$  is harmonic in  $B_R\setminus\overline{B_\rho}$, which is a subset of $B_R\setminus {\rm supp\,}\sigma$, and it satisfies (\ref{qR}).
Ignoring everything else  we now look for that function  $U=U_R$ which minimizes $\int_M dU\wedge*dU$ among all functions
$U\in W^{1,2}_0(B_R)$ having these two properties (with $q_R$ kept fixed). Then obviously 
\begin{equation}\label{Uu}
\int_{B_R} dU_R\wedge*dU_R\leq \int_{B_R} du_R\wedge*du_R.
\end{equation}

The point here is that $U_R$ can be easily computed, because it will be proportional to the conductor potential 
associated to the pair of conductors $\partial B_\rho$ and $\partial B_R$.
In fact, the variational formulation of the minimization problem for $U_R$ gives that $U_R$ has to be constant in  
$B_\rho$, besides being harmonic in $B_R\setminus\overline{B_\rho}$. 
Straight-forward calculations then give the expressions, valid in the harmonic region $\rho<r<R$,
$$
U_R(r)=
\begin{cases}
\displaystyle \frac{q_R}{(n-2)|S^{n-1}|} \left( \frac{1}{r^{n-2}}-\frac{1}{R^{n-2}} \right) \quad &(n\ne 2),\\[.4cm]
\displaystyle \frac{q_R}{2\pi}(\log R-\log r)\quad &(n= 2).
\end{cases}
$$
By this
\begin{align}
  \int_{B_R} dU_R\wedge*dU_R &= \int_{B_R\setminus B_\rho} dU_R\wedge*dU_R \nonumber \\
                             & =\begin{cases}
                               \displaystyle \frac{q_R^2 }{(n-2)|S^{n-1}|} \left( \frac{1}{\rho^{n-2}}-\frac{1}{R^{n-2}} \right) \quad &(n\ne 2),\\[.4cm]
                               \displaystyle \frac{q_R^2}{2\pi}(\log R-\log \rho)\quad &(n=2).
                             \end{cases} \label{UU}
\end{align}

Recall next that
$$
\int_{B_R} du_R\wedge*du_R=\mathcal{E}(\nu_R-\sigma).
$$
Besides the lower bound (\ref{Uu}), (\ref{UU}) for this quantity there are some fairly obvious upper bounds. Among the competitors $\nu$ for 
minimizing $\mathcal{E}(\nu-\sigma)$ one can simply choose  any $\nu$ which rearranges $\sigma$ directly, and independently of $R$, by putting  the 
mass $\sigma_+$ into parts of the available holes represented by $\sigma_-$. Recall that we have assumed that (\ref{sigmastrictlynegative}) holds and that
$\sigma$ has finite energy. One such choice of $\nu$ is given by $\tilde{\nu}$ in (\ref{nutilde}). Thus
$$
\mathcal{E}(\nu_R-\sigma)\leq \mathcal{E}(\tilde{\nu}-\sigma)<\infty.
$$
Combining this with (\ref{Uu}) and (\ref{UU}) gives the assertions of the proposition.

\end{proof}


\bibliography{bibliography_gbjorn}

\def\cprime{$'$} \def\cprime{$'$} \def\cprime{$'$} \def\cprime{$'$}
\begin{thebibliography}{10}

\bibitem{Armitage-Gardiner-2001}
{\sc D.~H. Armitage and S.~J. Gardiner}, {\em Classical potential theory},
  Springer Monographs in Mathematics, Springer-Verlag London, Ltd., London,
  2001.

\bibitem{Balogh-Harnad-2009}
{\sc F.~Balogh and J.~Harnad}, {\em Superharmonic perturbations of a {G}aussian
  measure, equilibrium measures and orthogonal polynomials}, Complex Anal.
  Oper. Theory, 3 (2009), pp.~333--360.

\bibitem{Berger-2003}
{\sc M.~Berger}, {\em A panoramic view of {R}iemannian geometry},
  Springer-Verlag, Berlin, 2003.

\bibitem{Bourgain-1987}
{\sc J.~Bourgain}, {\em On the {H}ausdorff dimension of harmonic measure in
  higher dimension}, Invent. Math., 87 (1987), pp.~477--483.

\bibitem{Brezis-Browder-1979}
{\sc H.~Br{\'e}zis and F.~Browder}, {\em A property of {S}obolev spaces}, Comm.
  Partial Differential Equations, 4 (1979), pp.~1077--1083.

\bibitem{Brezis-Ponce-2004}
{\sc H.~Brezis and A.~C. Ponce}, {\em Kato's inequality when {$\Delta u$} is a
  measure}, C. R. Math. Acad. Sci. Paris, 338 (2004), pp.~599--604.

\bibitem{Conner-1954}
{\sc P.~E. Conner}, {\em The {G}reen's and {N}eumann's problems for
  differential forms on {R}iemannian manifolds}, Proc. Nat. Acad. Sci. U.S.A.,
  40 (1954), pp.~1151--1155.

\bibitem{Conner-1956}
\leavevmode\vrule height 2pt depth -1.6pt width 23pt, {\em The {N}eumann's
  problem for differential forms on {R}iemannian manifolds}, Mem. Amer. Math.
  Soc., No. 20 (1956), p.~56.

\bibitem{Crowdy-2005}
{\sc D.~Crowdy}, {\em Quadrature domains and fluid dynamics}, in Quadrature
  domains and their applications, vol.~156 of Oper. Theory Adv. Appl.,
  Birkh\"auser, Basel, 2005, pp.~113--129.

\bibitem{deRham-1984}
{\sc G.~de~Rham}, {\em Differentiable manifolds}, vol.~266 of Grundlehren der
  Mathematischen Wissenschaften [Fundamental Principles of Mathematical
  Sciences], Springer-Verlag, Berlin, 1984.
\newblock Forms, currents, harmonic forms, Translated from the French by F. R.
  Smith, With an introduction by S. S. Chern.

\bibitem{Diaconis-Fulton-1990}
{\sc P.~Diaconis and W.~Fulton}, {\em A growth model, a game, an algebra,
  {L}agrange inversion, and characteristic classes}, Rend. Sem. Mat. Univ.
  Politec. Torino, 49 (1991), pp.~95--119 (1993).
\newblock Commutative algebra and algebraic geometry, II (Italian) (Turin,
  1990).

\bibitem{Doob-1984}
{\sc J.~L. Doob}, {\em Classical potential theory and its probabilistic
  counterpart}, vol.~262 of Grundlehren der Mathematischen Wissenschaften
  [Fundamental Principles of Mathematical Sciences], Springer-Verlag, New York,
  1984.

\bibitem{Elliott-Janovsky-1981}
{\sc C.~M. Elliott and V.~Janovsk{\'y}}, {\em A variational inequality approach
  to {H}ele-{S}haw flow with a moving boundary}, Proc. Roy. Soc. Edinburgh
  Sect. A, 88 (1981), pp.~93--107.

\bibitem{Entov-Etingof-1997}
{\sc V.~M. Entov and P.~I. Etingof}, {\em Viscous flows with time-dependent
  free boundaries in a non-planar {H}ele-{S}haw cell}, European J. Appl. Math.,
  8 (1997), pp.~23--35.

\bibitem{Federer-1969}
{\sc H.~Federer}, {\em Geometric measure theory}, Die Grundlehren der
  mathematischen Wissenschaften, Band 153, Springer-Verlag New York Inc., New
  York, 1969.

\bibitem{Frankel-2012}
{\sc T.~Frankel}, {\em The geometry of physics}, Cambridge University Press,
  Cambridge, third~ed., 2012.
\newblock An introduction.

\bibitem{Friedman-1982}
{\sc A.~Friedman}, {\em Variational principles and free-boundary problems},
  Pure and Applied Mathematics, John Wiley \& Sons, Inc., New York, 1982.
\newblock A Wiley-Interscience Publication.

\bibitem{Friedrichs-1955}
{\sc K.~O. Friedrichs}, {\em Differential forms on {R}iemannian manifolds},
  Comm. Pure Appl. Math., 8 (1955), pp.~551--590.

\bibitem{Gardiner-Sjodin-2008}
{\sc S.~J. Gardiner and T.~Sj{\"o}din}, {\em Convexity and the exterior inverse
  problem of potential theory}, Proc. Amer. Math. Soc., 136 (2008),
  pp.~1699--1703.

\bibitem{Gardiner-Sjodin-2009}
\leavevmode\vrule height 2pt depth -1.6pt width 23pt, {\em Partial balayage and
  the exterior inverse problem of potential theory}, in Potential theory and
  stochastics in {A}lbac, vol.~11 of Theta Ser. Adv. Math., Theta, Bucharest,
  2009, pp.~111--123.

\bibitem{Gardiner-Sjodin-2012}
\leavevmode\vrule height 2pt depth -1.6pt width 23pt, {\em Two-phase quadrature
  domains}, J. Anal. Math., 116 (2012), pp.~335--354.

\bibitem{Gardiner-Sjodin-2014}
\leavevmode\vrule height 2pt depth -1.6pt width 23pt, {\em Quadrature domains
  and their two-phase counterparts}, in Harmonic and complex analysis and its
  applications, Trends Math., Birkh\"auser/Springer, Cham, 2014, pp.~261--285.

\bibitem{Gustafsson-1985}
{\sc B.~Gustafsson}, {\em Applications of variational inequalities to a moving
  boundary problem for {H}ele-{S}haw flows}, SIAM J. Math. Anal., 16 (1985),
  pp.~279--300.

\bibitem{Gustafsson-1990b}
\leavevmode\vrule height 2pt depth -1.6pt width 23pt, {\em On quadrature
  domains and an inverse problem in potential theory}, J. Analyse Math., 55
  (1990), pp.~172--216.

\bibitem{Gustafsson-Sakai-1994}
{\sc B.~Gustafsson and M.~Sakai}, {\em Properties of some balayage operators,
  with applications to quadrature domains and moving boundary problems},
  Nonlinear Anal., 22 (1994), pp.~1221--1245.

\bibitem{Gustafsson-Shapiro-2005}
{\sc B.~Gustafsson and H.~S. Shapiro}, {\em What is a quadrature domain?}, in
  Quadrature domains and their applications, vol.~156 of Oper. Theory Adv.
  Appl., Birkh\"auser, Basel, 2005, pp.~1--25.

\bibitem{Gustafsson-Teodorescu-Vasiliev-2014}
{\sc B.~Gustafsson, R.~Teoderscu, and A.~Vasil{\cprime}ev}, {\em Classical and
  stochastic {L}aplacian growth}, Advances in Mathematical Fluid Mechanics,
  Birkh\"auser Verlag, Basel, 2014.

\bibitem{Hansen-Hueber-1988}
{\sc W.~Hansen and H.~Hueber}, {\em Singularity of harmonic measure for
  sub-{L}aplacians}, Bull. Sci. Math. (2), 112 (1988), pp.~53--64.

\bibitem{Hedenmalm-Makarov-2013}
{\sc H.~Hedenmalm and N.~Makarov}, {\em Coulomb gas ensembles and {L}aplacian
  growth}, Proc. Lond. Math. Soc. (3), 106 (2013), pp.~859--907.

\bibitem{Hedenmalm-Olofsson-2005}
{\sc H.~Hedenmalm and A.~Olofsson}, {\em Hele-{S}haw flow on weakly hyperbolic
  surfaces}, Indiana Univ. Math. J., 54 (2005), pp.~1161--1180.

\bibitem{Hedenmalm-Shimorin-2002}
{\sc H.~Hedenmalm and S.~Shimorin}, {\em Hele-{S}haw flow on hyperbolic
  surfaces}, J. Math. Pures Appl. (9), 81 (2002), pp.~187--222.

\bibitem{Helms-1969}
{\sc L.~L. Helms}, {\em Introduction to potential theory}, Pure and Applied
  Mathematics, Vol. XXII, Wiley-Interscience A Division of John Wiley \& Sons,
  New York-London-Sydney, 1969.

\bibitem{Kinderlehrer-Stampacchia-1980}
{\sc D.~Kinderlehrer and G.~Stampacchia}, {\em An introduction to variational
  inequalities and their applications}, vol.~88 of Pure and Applied
  Mathematics, Academic Press, Inc. [Harcourt Brace Jovanovich, Publishers],
  New York-London, 1980.

\bibitem{Levine-Peres-2010}
{\sc L.~Levine and Y.~Peres}, {\em Scaling limits for internal aggregation
  models with multiple sources}, J. Anal. Math., 111 (2010), pp.~151--219.

\bibitem{Oksendal-1972}
{\sc B.~K. {\O}ksendal}, {\em Null sets for measures orthogonal to {$R(X)$}},
  Amer. J. Math., 94 (1972), pp.~331--342.

\bibitem{Petersen-2006}
{\sc P.~Petersen}, {\em Riemannian geometry}, vol.~171 of Graduate Texts in
  Mathematics, Springer, New York, second~ed., 2006.

\bibitem{Petrosyan-Shahgholian-Uraltseva-2012}
{\sc A.~Petrosyan, H.~Shahgholian, and N.~Uraltseva}, {\em Regularity of free
  boundaries in obstacle-type problems}, vol.~136 of Graduate Studies in
  Mathematics, American Mathematical Society, Providence, RI, 2012.

\bibitem{Richardson-1972}
{\sc S.~Richardson}, {\em {H}ele-{S}haw flows with a free boundary produced by
  the injection of fluid into a narrow channel}, J. Fluid Mech., 56 (1972),
  pp.~609--618.

\bibitem{Roos-2015}
{\sc J.~Roos}, {\em Equilibrium measures and partial balayage}, Complex Anal.
  Oper. Theory, 9 (2015), pp.~65--85.

\bibitem{Saff-Totik-1997}
{\sc E.~B. Saff and V.~Totik}, {\em Logarithmic potentials with external
  fields}, vol.~316 of Grundlehren der Mathematischen Wissenschaften
  [Fundamental Principles of Mathematical Sciences], Springer-Verlag, Berlin,
  1997.
\newblock Appendix B by Thomas Bloom.

\bibitem{Sakai-1979}
{\sc M.~Sakai}, {\em The sub-mean-value property of subharmonic functions and
  its application to the estimation of the {G}aussian curvature of the span
  metric}, Hiroshima Math. J., 9 (1979), pp.~555--593.

\bibitem{Sakai-1982}
\leavevmode\vrule height 2pt depth -1.6pt width 23pt, {\em Quadrature domains},
  vol.~934 of Lecture Notes in Mathematics, Springer-Verlag, Berlin, 1982.

\bibitem{Sakai-1983}
\leavevmode\vrule height 2pt depth -1.6pt width 23pt, {\em Applications of
  variational inequalities to the existence theorem on quadrature domains},
  Trans. Amer. Math. Soc., 276 (1983), pp.~267--279.

\bibitem{Schottky-1877}
{\sc F.~Schottky}, {\em Ueber die conforme {A}bbildung mehrfach
  zusammenh\"angender ebener {F}l\"achen}, J. Reine Angew. Math., 83 (1877),
  pp.~300--351.

\bibitem{Schwarz-1995}
{\sc G.~Schwarz}, {\em Hodge decomposition---a method for solving boundary
  value problems}, vol.~1607 of Lecture Notes in Mathematics, Springer-Verlag,
  Berlin, 1995.

\bibitem{Shahgholian-Sjodin-2013}
{\sc H.~Shahgholian and T.~Sj{\"o}din}, {\em Harmonic balls and the two-phase
  {S}chwarz function}, Complex Var. Elliptic Equ., 58 (2013), pp.~837--852.

\bibitem{Shapiro-1992}
{\sc H.~S. Shapiro}, {\em The {S}chwarz function and its generalization to
  higher dimensions}, University of Arkansas Lecture Notes in the Mathematical
  Sciences, 9, John Wiley \& Sons Inc., New York, 1992.
\newblock A Wiley-Interscience Publication.

\bibitem{Sjodin-2007}
{\sc T.~Sj{\"o}din}, {\em On the structure of partial balayage}, Nonlinear
  Anal., 67 (2007), pp.~94--102.

\bibitem{Treves-1975}
{\sc F.~Tr{\`e}ves}, {\em Basic linear partial differential equations},
  Academic Press [A subsidiary of Harcourt Brace Jovanovich, Publishers], New
  York-London, 1975.
\newblock Pure and Applied Mathematics, Vol. 62.

\bibitem{Varchenko-Etingof-1992}
{\sc A.~N. Varchenko and P.~I. Etingof}, {\em Why the Boundary of a Round Drop
  Becomes a Curve of Order Four}, American Mathematical Society AMS University
  Lecture Series, Providence, Rhode Island, third~ed., 1992.

\bibitem{Warner-1983}
{\sc F.~W. Warner}, {\em Foundations of differentiable manifolds and {L}ie
  groups}, vol.~94 of Graduate Texts in Mathematics, Springer-Verlag, New
  York-Berlin, 1983.
\newblock Corrected reprint of the 1971 edition.

\bibitem{Zidarov-1990}
{\sc D.~Zidarov}, {\em Inverse gravimetric problem in geoprospecting and
  geodesy}, no.~19 in Developments in Solid Earth Geophysics, Elsevier, 1990.

\end{thebibliography}


\noindent Bj\"orn Gustafsson \\[.1cm]
Department of Mathematics \\
KTH \\
SE-100 44 Stockholm \\
Sweden \\[.1cm]
e-mail: \texttt{gbjorn@kth.se} \\

\noindent Joakim Roos \\[.1cm]
Department of Mathematics \\
KTH \\
SE-100 44 Stockholm \\
Sweden \\[.1cm]
e-mail: \texttt{joakimrs@math.kth.se}

\end{document}